\newcommand{\mb}{\mathbf}
\newcommand{\mc}{\mathcal}
\renewcommand{\Re}{\mathrm{Re}\,}
\renewcommand{\Im}{\mathrm{Im}\,}
\newcommand{\rg}{\mathrm{rg}\,}
\newcommand{\N}{\mathbb{N}}
\newcommand{\R}{\mathbb{R}}
\newcommand{\C}{\mathbb{C}}
\newcommand{\Z}{\mathbb{Z}}
\newcommand{\B}{\mathbb{B}}
\renewcommand{\S}{\mathbb{S}}
\newcommand{\rad}{\mathrm{rad}}
\newcommand{\I}{\mathrm{i}\,}
\newtheorem{lemma}{Lemma}[section]
\newtheorem{proposition}[lemma]{Proposition}
\newtheorem{theorem}[lemma]{Theorem}
\newtheorem{corollary}[lemma]{Corollary}
\theoremstyle{remark}
\newtheorem{remark}[lemma]{Remark}
\theoremstyle{definition}
\newtheorem{definition}[lemma]{Definition}
\numberwithin{equation}{section}
\title{On blowup in supercritical wave equations}
\author{Roland Donninger}
\address{Rheinische Friedrich-Wilhelms-Universit\"at Bonn,
Mathematisches Institut, Endenicher Allee 60, D-53115 Bonn, Germany}
\email{donninge@math.uni-bonn.de}
\author{Birgit Sch\"orkhuber}
\address{Universit\"at Wien, Fakult\"at f\"ur Mathematik,
Oskar-Morgenstern-Platz 1, A-1090 Vienna, Austria}
\email{birgit.schoerkhuber@univie.ac.at}
\thanks{Roland Donninger is supported by the Alexander von 
Humboldt Foundation via a Sofja Kovalevskaja Award endowed by the German
Federal Ministry of Education and Research. 
Birgit Sch\"orkhuber is supported by the Austrian Science Fund
(FWF) via the Hertha Firnberg Program, Project Nr. T739-N25. Both authors would like to thank the Hausdorff Research
Institute for Mathematics in Bonn
for hospitality during the Trimester Program ``Harmonic Analysis and Partial Differential
Equations''.}
\begin{document}

\begin{abstract}
We study the blowup behavior
for the focusing energy-supercritical semilinear wave equation in $3$ space dimensions
without symmetry assumptions on the data. 
We prove the stability in $H^2\times H^1$ of the ODE blowup profile.
\end{abstract}

\maketitle

\tableofcontents

\section{Introduction}
\noindent The present paper is concerned with the Cauchy problem for the semilinear wave
equation
\begin{equation}
\label{eq:main}
 (-\partial_t^2 + \Delta_x)u(t,x)=-u(t,x)|u(t,x)|^{p-1},\qquad p>3 
 \end{equation}
for $x\in \R^3$.
Eq.~\eqref{eq:main} has the (indefinite) conserved energy
\[ \int_{\R^3}\left [ \tfrac12|\partial_t u(t,x)|^2 + \tfrac12|\nabla_x u(t,x)|^2
-\tfrac{1}{p+1}|u(t,x)|^{p+1} \right ]dx \]
and is invariant under the scaling transformation 
\[ u(t,x)\mapsto \lambda^{-\frac{2}{p-1}}
u(t/\lambda,x/\lambda),\qquad \lambda>0. \]
The corresponding scaling-invariant Sobolev space for the pair of functions
$(u(t,\cdot),
\partial_t u(t,\cdot))$ is $\dot H^{s_p}\times
\dot H^{s_p-1}(\R^3)$, where $s_p=\frac32-\frac{2}{p-1}$. 
Comparison with the free energy space $\dot H^1\times L^2(\R^3)$ shows that Eq.~\eqref{eq:main}
is energy-subcritical, critical, or supercritical if $p<5$, $p=5$, or $p>5$, respectively.
In fact, it is the latter case we are mainly interested in, although our results hold
for all $p>3$ (and with trivial modifications also for $p>1$).
Other symmetries of the equation
that are relevant in our context are time-translations 
and
Lorentz boosts. 
More precisely, 
for $a=(a^1,a^2,a^3)\in \R^3$ and $T\in \R$,
we set $\Lambda_T(a)=\Lambda_T^3(a^3)\Lambda_T^2(a^2)\Lambda_T^1(a^1)$, 
where $\Lambda^j: \R\times \R^3\to \R\times \R^3$, $j\in \{1,2,3\}$, is 
defined by 
\begin{align*}
\Lambda_T^j(a^j): \left \{ \begin{array}{rcl}
t &\mapsto &(t-T)\cosh a^j+x^j \sinh a^j+T \\
x^k &\mapsto &x^k+\delta^{jk}[(t-T)\sinh a^j+x^j \cosh a^j-x^j]
\end{array} \right . .
\end{align*}
Then, if $u$ is a solution to Eq.~\eqref{eq:main}, so is 
$u \circ \Lambda_T(a)$ for any $a\in \R^3$ and $T\in \R$
(the parameter $a$ is called \emph{rapidity}).
Note that $(T,0)$ is the point where the Lorentz transform $\Lambda_T(a)$ is anchored,
i.e., $(T,0)$ is a fixed point of the transformation $\Lambda_T(a)$ and the lightcones
emanating from $(T,0)$ are invariant under $\Lambda_T(a)$.

\subsection{Basic well-posedness theory}
By definition, $u$ is a \emph{(strong) solution} of Eq.~\eqref{eq:main} with data
$(f,g)\in H^s\times H^{s-1}(\R^3)$, $s\in \R$, 
if $u$ satisfies
\begin{align}
\label{eq:Duhamel}
u(t,\cdot)=&\cos(t|\nabla|)f
+\frac{\sin(t|\nabla|)}{|\nabla|}g \nonumber \\
&+\int_0^t \frac{\sin((t-s)|\nabla|)}{|\nabla|}
\left [u(s,\cdot)|u(s,\cdot)|^{p-1} \right ] ds
\end{align}
for $t$ in some interval containing $0$.
Based on this solution concept it follows
by Strichartz theory that Eq.~\eqref{eq:main} is locally well-posed in the 
critical Sobolev
space $\dot H^{s_p}\times \dot H^{s_p-1}(\R^3)$, see \cite{LinSog95, Sog08}. 
Furthermore,
local well-posedness in $H^2 \times H^1(\R^3)$ is classical, cf.~\cite{Tao06}.
In this respect it is worth noting that
Eq.~\eqref{eq:main} exhibits finite-time blowup from smooth, compactly
supported initial data, see below.
As a consequence, if $p>5$, the equation is ill-posed in the 
energy space $\dot H^1\times L^2$.
This is easily seen by rescaling a finite-time blowup solution \cite{Sog08, LinSog95}.
Indeed, due to the supercritical character, 
the lifespan of the rescaled solution decreases while
at the same time its energy decreases as well.
Consequently, there can be no small data local well-posedness in the energy space
and it is natural and necessary to study supercritical problems
in spaces of higher regularity.
In our case, $H^2\times H^1$ will suffice.

\subsection{Solutions in lightcones}
As a matter of fact, we need a more refined notion of solution
which allows us to localize to lightcones
\[ \Gamma(t_0,x_0):=\{(t,x)\in [0,t_0)\times \R^3: |x-x_0|\leq t_0-t\}, \]
where $(t_0,x_0)\in (0,\infty)\times \R^3$. 
If $u\in C^\infty(\Gamma(t_0,x_0))$ it is clear what it means that $u$ solves Eq.~\eqref{eq:main}.
However, we need to extend the notion of solution to functions $u$ on $\Gamma(t_0,x_0)$
with so little regularity that they cannot satisfy Eq.~\eqref{eq:main} in the 
sense of classical derivatives.
One way to do this is to use the Duhamel formula \eqref{eq:Duhamel} and a suitable
cut-off technique, see \cite{KilStoVis14}.
For our purposes it is more natural to resort to semigroup theory.
To motivate the following, suppose $u\in C^\infty(\Gamma(t_0,x_0))$ is a solution 
of Eq.~\eqref{eq:main} with $t_0>0$ and $x_0\in\R^3$.
A very natural coordinate system on the cone $\Gamma(t_0,x_0)$ is provided by
the \emph{self-similar variables}
\[ \tau=-\log(t_0-t)+\log t_0,\qquad \xi=\frac{x-x_0}{t_0-t}, \]
cf.~\cite{AntMer01, MerZaa03, MerZaa05,
MerZaa14a, MerZaa14b}.
Consequently, we introduce 
\begin{align} 
\label{eq:upsi}
\psi_1^{t_0,x_0}(\tau,\xi)&:=t_0^{\frac{2}{p-1}}e^{-\frac{2}{p-1}\tau}u(t_0-t_0e^{-\tau},
x_0+t_0 e^{-\tau}\xi) \nonumber \\
\psi_2^{t_0,x_0}(\tau,\xi)&:=t_0^{\frac{2}{p-1}+1}e^{-(\frac{2}{p-1}+1)\tau}
\partial_0 u(t_0-t_0e^{-\tau},
x_0+t_0 e^{-\tau}\xi).
\end{align}
This change of variables is discussed below in more detail.
It follows that $\psi_1^{t_0,x_0}, \psi_2^{t_0,x_0}\in C^\infty([0,\infty)\times 
\overline{\B^3})$
and Eq.~\eqref{eq:main} implies 
\begin{align}
\label{eq:syspsiintro}
\partial_0 \psi_1^{t_0,x_0}&=-\xi^j \partial_j \psi_1^{t_0,x_0}
-\tfrac{2}{p-1}\psi_1^{t_0,x_0}+\psi_2^{t_0,x_0} \nonumber \\
\partial_0 \psi_2^{t_0,x_0}&=\partial^j \partial_j \psi_1^{t_0,x_0}
-\xi^j \partial_j \psi_2^{t_0,x_0}-\tfrac{p+1}{p-1}\psi_2^{t_0,x_0}
+\psi_1^{t_0,x_0} |\psi_1^{t_0,x_0}|^{p-1}.
\end{align}
With $\mb{\tilde L}$ denoting the linear operator on the right-hand side of 
Eq.~\eqref{eq:syspsiintro} and $\mb N$ the nonlinearity, we can write this more succinctly as
\[ \partial_\tau \Psi^{t_0,x_0}(\tau)=\tilde{\mb L}\Psi^{t_0,x_0}(\tau)
+\mb N(\Psi^{t_0,x_0}(\tau)) \]
for $\Psi^{t_0,x_0}(\tau)=(\psi_1^{t_0,x_0}(\tau,\cdot),\psi_2^{t_0,x_0}(\tau,\cdot))$.
We will prove the following.

\begin{proposition}
\label{prop:main}
Let $\mc H:=H^2\times H^1(\B^3)$ and consider 
\[ \tilde{\mb L}: \mc D(\tilde{\mb L})\subset
\mc H \to \mc H \] 
with domain $\mc D(\tilde{\mb L}):=C^3\times C^2(\overline{\B^3})$.
Then $\tilde{\mb L}$ is densely defined, closable, and its closure generates a 
strongly-continuous one-parameter semigroup $\{\mb S(\tau): \tau\geq 0\}$
of bounded operators on $\mc H$.
\end{proposition}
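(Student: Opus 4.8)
The plan is to apply the Lumer--Phillips theorem, after first peeling off the lower-order terms of $\tilde{\mb L}$ by a bounded perturbation. Write $\tilde{\mb L}=\mb L_0+\mb L'$, where
\[ \mb L'\begin{pmatrix}u_1\\ u_2\end{pmatrix}:=\begin{pmatrix}-\tfrac{2}{p-1}u_1\\ -\tfrac{p+1}{p-1}u_2\end{pmatrix},\qquad
\mb L_0\begin{pmatrix}u_1\\ u_2\end{pmatrix}:=\begin{pmatrix}-\xi^j\partial_j u_1+u_2\\ \partial^j\partial_j u_1-\xi^j\partial_j u_2\end{pmatrix}. \]
Since $\mb L'$ is bounded on $\mc H$, the bounded perturbation theorem reduces the assertion to the corresponding statement for $\mb L_0$ (with the same domain): once $\overline{\mb L_0}$ generates a strongly continuous semigroup on $\mc H$, so does $\overline{\tilde{\mb L}}=\overline{\mb L_0}+\mb L'$. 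That $\mc D(\tilde{\mb L})=C^3\times C^2(\overline{\B^3})$ is dense in $\mc H=H^2\times H^1(\B^3)$ and is mapped into $\mc H$ by $\mb L_0$ is clear; closability of $\mb L_0$, hence of $\tilde{\mb L}$, will be a byproduct of the dissipativity estimate, a densely defined dissipative operator on a Hilbert space being automatically closable.

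The core of the argument is a dissipativity estimate: one constructs an inner product $(\,\cdot\,|\,\cdot\,)$ on $\mc H$, equivalent to the canonical one, and a constant $\omega\geq 0$ with $\Re(\mb L_0\mb u\,|\,\mb u)\leq\omega\,\|\mb u\|^2$ for every $\mb u\in\mc D(\tilde{\mb L})$; equivalently, $\mb L_0-\omega$ is dissipative. On the smooth domain $C^3\times C^2$ all the relevant integrations by parts are legitimate, and two structural features must be exploited. First, the inner product has to be tuned so that the top-order terms produced by the coupling of the two equations --- the term $u_2$ in the first and $\partial^j\partial_j u_1$ in the second --- cancel: after one integration by parts these pairings of $\nabla^2 u_1$ against $\nabla^2 u_2$ enter with opposite signs in the real part and annihilate each other, leaving only terms of strictly lower differential order, which are bounded by $\|\mb u\|^2$. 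Second, the transport field $\xi^j\partial_j$ points outward along $\partial\B^3$ --- it is the image of the lightcone under the self-similar change of variables --- so that integrating the transport terms against themselves, and against their derivatives of order up to two (using the commutator identity $[\partial_k,\xi^j\partial_j]=\partial_k$), yields a fixed multiple of $\|\mb u\|^2$ plus surface integrals over $\partial\B^3$; these, combined with the boundary contributions left over from the coupling integration by parts, must be shown to carry a sign that lets them be discarded. This last point is where the characteristic (lightcone) character of $\partial\B^3$ enters decisively. Putting the estimates together gives the dissipativity bound.

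By the Lumer--Phillips theorem it then remains to show that $\mathrm{ran}(\lambda-\mb L_0)$ is dense in $\mc H$ for some $\lambda>\omega$. Given $\mb f=(f_1,f_2)$ in the dense subspace $C^\infty(\overline{\B^3})\times C^\infty(\overline{\B^3})$, one solves $(\lambda-\mb L_0)\mb u=\mb f$: the first component yields $u_2=(\lambda+\xi^j\partial_j)u_1-f_1$, and inserting this into the second leaves a single second-order equation for $u_1$ on $\B^3$ with principal part $(\delta^{jk}-\xi^j\xi^k)\partial_j\partial_k$. The matrix $\delta^{jk}-\xi^j\xi^k$ has eigenvalues $1$ in the two tangential directions and $1-|\xi|^2$ in the radial one, so the operator is uniformly elliptic on each ball $\{|\xi|\leq r\}$, $r<1$, and degenerates on $\partial\B^3$, where it reduces to the outgoing radial transport operator; consequently $\{|\xi|=1\}$ is characteristic and no boundary condition is imposed. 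For $\lambda$ large this equation has a solution $u_1\in C^3(\overline{\B^3})$ --- obtained for instance by expanding in spherical harmonics and analyzing the resulting Fuchsian ODEs at the singular points $\rho=0$ and $\rho=1$, or by a weighted-energy Galerkin approximation, or by transcribing the explicit solution of the free wave equation in the backward lightcone --- and then $\mb u=(u_1,u_2)\in\mc D(\tilde{\mb L})$ with $(\lambda-\mb L_0)\mb u=\mb f$, so $\mathrm{ran}(\lambda-\mb L_0)$ is dense. Lumer--Phillips now gives that $\overline{\mb L_0}-\omega$ is $m$-dissipative, hence $\overline{\mb L_0}$ generates a strongly continuous semigroup of bounded operators on $\mc H$, and the bounded perturbation theorem finally produces the semigroup $\{\mb S(\tau):\tau\geq 0\}$ generated by $\overline{\tilde{\mb L}}$.

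I expect the dissipativity estimate to be the main obstacle. The difficulty is to select the equivalent inner product on $H^2\times H^1(\B^3)$ so that the top-order part of the wave coupling cancels exactly and, at the same time, every boundary integral over $\partial\B^3$ --- those coming from the transport terms together with those coming from the coupling --- assembles into an expression of the right sign; this bookkeeping succeeds only because $\partial\B^3$ is the lightcone. A secondary difficulty is the solvability of the resolvent equation, whose reduced form is genuinely elliptic in the interior but degenerates exactly on the characteristic boundary, so that standard elliptic theory up to the boundary does not apply directly.
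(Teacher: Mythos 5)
Your plan coincides with the paper's: the Lumer--Phillips theorem is applied after building an equivalent inner product on $H^2\times H^1(\B^3)$ that makes $\tilde{\mb L}$ dissipative (top-order coupling terms cancel, boundary integrals over the characteristic sphere have a favorable sign), and density of the range of $\lambda-\tilde{\mb L}$ is obtained by reducing to a degenerate elliptic equation on $\B^3$ solved via spherical-harmonic decomposition into hypergeometric ODEs with endpoint analysis at $\rho=1$. The only cosmetic difference is that you peel off the zeroth-order diagonal term by a bounded perturbation, whereas the paper keeps it and thereby obtains the sharp growth bound $\|\mb S(\tau)\|\leq e^{-\frac{2}{p-1}\tau}$ in one stroke; and the ``tuning'' you allude to concretely requires augmenting the two energy forms by a third, rank-one boundary form $(\mb u|\mb v)_3$ (together with a nonstandard Poincar\'e inequality to prove norm equivalence) whose dissipativity is a three-dimensional coincidence, a point worth making explicit.
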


As a consequence of Proposition \ref{prop:main} and Duhamel's principle, $\Psi^{t_0,x_0}$ satisfies
\begin{equation}
\label{eq:Duhamelintro}
\Psi^{t_0,x_0}(\tau)=\mb S(\tau)\Psi^{t_0,x_0}(0)+\int_0^\tau \mb S(\tau-\sigma)
\mb N(\Psi^{t_0,x_0}(\sigma))d\sigma
\end{equation}
for all $\tau\geq 0$.
These observations lead to the following natural
concept of solutions of Eq.~\eqref{eq:main} in lightcones.

\begin{definition}
We say that a function $u: \Gamma(t_0,x_0)\to \R$ is a \emph{solution} 
(more precisely, an \emph{$H^2$-solution})
of Eq.~\eqref{eq:main} if the corresponding $\Psi^{t_0,x_0}=(\psi_1^{t_0,x_0},
\psi_2^{t_0,x_0})$, given by Eq.~\eqref{eq:upsi}, 
belongs
to $C([0,\infty),H^2\times H^1(\B^3))$ and satisfies
Eq.~\eqref{eq:Duhamelintro}
for all $\tau\geq 0$.
\end{definition}

\subsection{Blowup surface}
Based on the Sobolev embedding $H^2(\B^3) \hookrightarrow L^\infty(\B^3)$ 
it is not hard to see that the nonlinearity $\mb N$ 
is locally Lipschitz on $H^2\times H^1(\B^3)$ and thus, 
it follows from Gronwall's inequality that for given data $(f,g) \in H^2\times H^1(\B^3_{t_0}(x_0))$,
there exists at most one solution $u: \Gamma(t_0,x_0)\to \R$ of Eq.~\eqref{eq:main}
with\footnote{We use the convenient abbreviation $u[t]:=(u(t,\cdot),\partial_t u(t,\cdot))$.} 
$u[0]=(f,g)$.
This leads to the concept of the \emph{blowup surface},
cf.~\cite{CafFri86, AntMer01, KilStoVis14, MerZaa14a, MerZaa14b}.

\begin{definition}
For given $x_0\in \R^3$ and $(f,g)\in H^2\times H^1(\R^3)$, we say that 
$t_0>0$ belongs to $A_{f,g}(x_0)\subset \R$
if there exists a solution $u: \Gamma(t_0,x_0)\to \R$ of Eq.~\eqref{eq:main} with
$u[0]=(f,g)|_{\B^3_{t_0}(x_0)}$.
We set 
\[ T_{f,g}(x_0):=\sup A_{f,g}(x_0) \cup \{0\}. \]
If $T_{f,g}(x_0)<\infty$, $T_{f,g}(x_0)$ is called the \emph{blowup time at $x_0$}.
The set
\[ \{(T_{f,g}(x),x) \in [0,\infty)\times \R^3: x\in \R^3\} \]
is called the \emph{blowup surface}.
\end{definition}

Consequently, to any data $(f,g)\in H^2\times H^1(\R^3)$ there exists a unique, 
maximally future-extended solution $u$ defined on a union of lightcones.
The domain of definition of $u$ is of the form $\{(t,x)\in [0,\infty)\times \R:
t<T_{f,g}(x)\}$.

\subsection{The main result}
With these preparations we are now ready to formulate our main result.
First, recall that
the existence of finite-time blowup is most easily seen by ignoring the
Laplacian in Eq.~\eqref{eq:main}. The remaining ODE in $t$ can be solved explicitly
which
 leads to the solution 
\[ u_1(t,x)=c_p (1-t)^{-\frac{2}{p-1}},\qquad 
c_p=\left [\frac{2(p+1)}{(p-1)^2}\right ]^\frac{1}{p-1}. \]
Obviously, $u_1$ becomes singular at $t=1$. 
One might object that this solution does not have data in $H^2\times H^1(\R^3)$. 
However, this
defect is easily fixed by using suitable cut-offs and exploiting finite speed of propagation.
By using symmetries of the equation one can in fact produce a much larger family of blowup
solutions.
For instance, the time translation symmetry immediately yields the one-parameter family
\[ u_T(t,x):=c_p (T-t)^{-\frac{2}{p-1}} \]
and by applying the Lorentz transform $\Lambda_T(a)$,
we can even generate the 4-parameter family 
\[ u_{T,a}(t,x):=u_T(\Lambda_T(a)(t,x)) \]
of explicit
blowup solutions. 
In this paper we prove the stability of the above family of explicit blowup solutions
in the following sense.

\begin{theorem}
\label{thm:main}Fix $p>3$.
There exist constants $M,\delta>0$ such that the following holds.
Suppose $(f,g)\in H^2\times H^1(\R^3)$ satisfy
\[ \|(f,g)-u_{1,0}[0]\|_{H^2\times H^1(\B^3_{1+\delta}(x_0))}\leq \tfrac{\delta}{M} \]
for some $x_0\in \R^3$.
Then  
$T:=T_{f,g}(x_0)\in [1-\delta,1+\delta]$ and there exists an
$a\in \B^3_{Mp\delta}$ such that 
the solution $u: \Gamma(T_{f,g}(x_0),x_0)\to \R$ of Eq.~\eqref{eq:main} with data 
$u[0]=(f,g)$
satisfies
\begin{align*} (T-t)^{-s_p+2}\|u[t]-u_{T,a}[t]
\|_{\dot H^2\times \dot H^1(\B^3_{T-t}(x_0))}&\lesssim 
(T-t)^{\frac{1}{p-1}} \\
(T-t)^{-s_p+1}\|u[t]-u_{T,a}[t]
\|_{\dot H^1\times L^2(\B^3_{T-t}(x_0))}&\lesssim 
(T-t)^{\frac{1}{p-1}} \\
(T-t)^{-s_p}\|u(t,\cdot)-u_{T,a}(t,\cdot)
\|_{L^2(\B^3_{T-t}(x_0))}&\lesssim 
(T-t)^{\frac{1}{p-1}} 
\end{align*}
for all $t\in [0,T)$, where $s_p=\frac32-\frac{2}{p-1}$ is the critical Sobolev exponent.
\end{theorem}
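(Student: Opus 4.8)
The plan is to pass to the self-similar variables \eqref{eq:upsi}, linearize Eq.~\eqref{eq:syspsiintro} about the static ODE profile, and combine a Lyapunov--Perron fixed point with a finite-dimensional modulation in the symmetry parameters $(T,a)$; the bulk of the work, and the main obstacle, is the spectral analysis of the linearized operator. By translation invariance of Eq.~\eqref{eq:main} we may take $x_0=0$. A short computation shows that under \eqref{eq:upsi} (with $t_0=1$, $x_0=0$) the profile $u_{1,0}$ becomes the constant vector $\Psi_\infty:=(c_p,\tfrac{2c_p}{p-1})$, a stationary solution of Eq.~\eqref{eq:syspsiintro}, and more generally $\Psi^a:=\Psi^{T,0}_{u_{T,a}}$ is, for small $|a|$, a stationary solution independent of $T$ and $\tau$, with $\Psi^0=\Psi_\infty$. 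For a solution $u$ on $\Gamma(T,0)$ set $\Phi:=\Psi^{T,0}_u-\Psi^a$; since $\Psi^a$ is stationary, \eqref{eq:Duhamelintro} becomes
\[
\Phi(\tau)=\mb S_L(\tau)\Phi(0)+\int_0^\tau \mb S_L(\tau-\sigma)\,\mb F_a(\Phi(\sigma))\,d\sigma,\qquad \Phi(0)=\Psi^{T,0}_{(f,g)}(0)-\Psi^a,
\]
where $\mb L:=\overline{\tilde{\mb L}}+\mb L'$ with $\mb L'(\phi_1,\phi_2):=(0,p\,c_p^{p-1}\phi_1)$ the Fr\'echet derivative of $\mb N$ at $\Psi_\infty$, $\mb S_L$ is the semigroup generated by $\mb L$ (it exists by Proposition \ref{prop:main} and the bounded perturbation theorem, as $\mb L'$ is bounded on $\mc H$, indeed compact since $H^2(\B^3)\hookrightarrow H^1(\B^3)$ compactly), and $\mb F_a(\Phi):=\mb N(\Psi^a+\Phi)-\mb N(\Psi^a)-\mb L'\Phi$ is locally Lipschitz on $\mc H$ with $\|\mb F_a(\Phi)\|_{\mc H}\lesssim |a|\,\|\Phi\|_{\mc H}+\|\Phi\|_{\mc H}^2$ for $|a|,\|\Phi\|_{\mc H}$ small (using that $\mb N$ is smooth near $\Psi_\infty$ because $c_p>0$, and that $H^2(\B^3)$ is a Banach algebra contained in $L^\infty(\B^3)$).

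\emph{Spectral analysis.} The symmetries of Eq.~\eqref{eq:main} produce explicit eigenfunctions of $\mb L$: differentiating $u_T$ in $T$ at $T=1$ gives the constant vector $\mb g_0:=-(\tfrac{2c_p}{p-1},\tfrac{2c_p(p+1)}{(p-1)^2})$ with $\mb L\mb g_0=\mb g_0$, and differentiating $u_{T,a}$ in $a^j$ at $a=0$ gives $\mb g_j$ ($j=1,2,3$), whose first component is a multiple of $\xi^j$, with $\mb L\mb g_j=0$ (the $\mb g_j$ span the tangent space at $0$ of the family $\{\Psi^a\}$ of stationary solutions, hence lie in the kernel). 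The crux of the argument --- and the step I expect to be hardest --- is to prove that
\[
\sigma(\mb L)\cap\{\Re\lambda\ge-\kappa_p\}=\{0,1\}\qquad\text{for some }\kappa_p>\tfrac1{p-1},
\]
with $1$ a simple eigenvalue with eigenspace $\langle\mb g_0\rangle$, $0$ an eigenvalue of geometric and algebraic multiplicity $3$ with eigenspace $\langle\mb g_1,\mb g_2,\mb g_3\rangle$, together with resolvent bounds on the lines $\{\Re\lambda=-\omega\}$ ($\tfrac1{p-1}<\omega<\kappa_p$) uniform in $\Im\lambda$. Without symmetry this forces a decomposition of $L^2(\B^3)$ into spherical-harmonic sectors and, for each angular momentum $\ell$, the analysis of the resulting second-order ODE eigenvalue problem on $(0,1)$ --- whose only singularities are the regular singular points $0$ and the light cone $|\xi|=1$ --- ruling out eigenvalues in $\{\Re\lambda\ge-\kappa_p\}$ for $\ell\ge2$, showing that $\ell=0$ produces only $\lambda=1$ and $\ell=1$ only $\lambda=0$, and controlling the essential spectrum of $\overline{\tilde{\mb L}}$ (unaffected by the compact perturbation $\mb L'$). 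With the Gearhart--Pr\"uss theorem this yields, denoting by $\mb P$ the finite-rank Riesz projection onto $\langle\mb g_0,\mb g_1,\mb g_2,\mb g_3\rangle$ and for any $\omega\in(\tfrac1{p-1},\kappa_p)$ --- a range nonempty precisely because $p>3$ ---
\[
\|\mb S_L(\tau)(\id-\mb P)\mb h\|_{\mc H}\lesssim e^{-\omega\tau}\|\mb h\|_{\mc H},\qquad \mb S_L(\tau)\mb g_0=e^{\tau}\mb g_0,\quad \mb S_L(\tau)\mb g_j=\mb g_j.
\]

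\emph{Nonlinear evolution and modulation.} Fix such an $\omega$ and set $X:=\{\Phi\in C([0,\infty),\mc H):\|\Phi\|_X:=\sup_{\tau\ge0}e^{\omega\tau}\|\Phi(\tau)\|_{\mc H}<\infty\}$. For $(T,a)$ near $(1,0)$ the datum $\mb U(T,a):=\Psi^{T,0}_{(f,g)}(0)-\Psi^a$ is, by unwinding \eqref{eq:upsi} and using that the hypothesis is made on $\B^3_{1+\delta}(x_0)\supseteq\B^3_T(0)$, of the form $\mb U(T,a)=\mb W-(T-1)\mb g_0-\sum_{j=1}^3 a^j\mb g_j+O\big((T-1)^2+|a|^2\big)$ with $\|\mb W\|_{\mc H}\lesssim\tfrac\delta M$; thus $(T-1,a)\mapsto\mb P\,\mb U(T,a)$ is, to leading order, the linear isomorphism $-(T-1)\mb g_0-\sum_j a^j\mb g_j$ of $\R^4$ onto the range of $\mb P$ --- this is exactly the statement that the symmetry generators span the centre-unstable eigenspace --- and, although $T\mapsto\Psi^{T,0}_{(f,g)}(0)$ is merely continuous into $\mc H$, composing with the finite-rank projection $\mb P$ onto smooth functions makes it $C^1$ in $(T,a)$. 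For $\tfrac\delta M$ small one now solves, by Banach's fixed point theorem in a small ball of $X$, a modified Duhamel equation in which the usual iterate $\mb S_L(\tau)\mb U+\int_0^\tau\mb S_L(\tau-\sigma)\mb F_a(\Phi(\sigma))\,d\sigma$ has its explicitly computable non-decaying $\mb P$-part subtracted off (the $e^{\tau}$-growing component along $\mb g_0$, $\mb S_L(\tau)\mb P_1[\mb U+\int_0^\infty e^{-\sigma}\mb F_a(\Phi(\sigma))\,d\sigma]$, and the $\tau$-constant component along $\langle\mb g_1,\mb g_2,\mb g_3\rangle$, $\mb P_0[\mb U+\int_0^\infty\mb F_a(\Phi(\sigma))\,d\sigma]$, where $\mb P_1,\mb P_0$ are the corresponding subprojections of $\mb P$). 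The fixed point $\Phi=\Phi_{T,a}\in X$ solves the genuine (uncorrected) equation for $\Phi$ exactly when that subtracted correction vanishes, which is the system $\mb P\,\mb U(T,a)=\mb R(\Phi_{T,a})$ for an explicit $\mb R$ with values in the range of $\mb P$ and $\|\mb R(\Phi)\|\lesssim\|\Phi\|_X^2$; by the isomorphism property and smallness, the implicit function theorem (or a further contraction) gives a unique $(T,a)=(T_\star,a_\star)$ near $(1,0)$ solving it, with $|T_\star-1|\le\delta$ and $|a_\star|\le Mp\delta$.

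\emph{Conclusion.} For this choice, $\Phi_{T_\star,a_\star}\in X$, so $\Psi^{T_\star,0}_u:=\Psi^{a_\star}+\Phi_{T_\star,a_\star}$ satisfies \eqref{eq:Duhamelintro} and the corresponding $u$ is an $H^2$-solution on $\Gamma(T_\star,0)$ with data $(f,g)$; hence $T_\star\le T_{f,g}(x_0)$. Conversely, since $(\Phi_{T_\star,a_\star}(\tau))_1\to0$ in $H^2(\B^3)\hookrightarrow C^0(\overline{\B^3})$, we get $u(t,0)=(T_\star-t)^{-\frac2{p-1}}\big((\Psi^{a_\star})_1(0)+o(1)\big)\to+\infty$ as $t\to T_\star^-$ (with $(\Psi^{a_\star})_1(0)>0$), whereas $(T_\star,0)$ is an interior point of every cone $\Gamma(t_0,0)$ with $t_0>T_\star$ and $H^2$-solutions are continuous there, so no $H^2$-solution exists on any such larger cone; thus $T:=T_{f,g}(x_0)=T_\star\in[1-\delta,1+\delta]$. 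Finally, \eqref{eq:upsi} is linear in $u$, so $u[t]-u_{T,a}[t]$ corresponds exactly to $\Phi_{T_\star,a_\star}(\tau)$ under \eqref{eq:upsi} with $e^{-\tau}=(T-t)/T$; undoing the change of variables, the Jacobian contributes precisely the prefactors $(T-t)^{-s_p+2}$, $(T-t)^{-s_p+1}$, $(T-t)^{-s_p}$, so that each of the three weighted norms on $\B^3_{T-t}(x_0)$ equals the corresponding Sobolev norm of $\Phi_{T_\star,a_\star}(\tau)$ on $\B^3$, which is bounded by $\|\Phi_{T_\star,a_\star}(\tau)\|_{\mc H}\lesssim e^{-\omega\tau}$; and since $T-t\le T\le1+\delta$ and $\omega>\tfrac1{p-1}$ we get $e^{-\omega\tau}\lesssim(T-t)^{\omega}\lesssim(T-t)^{\frac1{p-1}}$, which is the claim.
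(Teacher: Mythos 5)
Your plan follows the same overall architecture as the paper (self-similar variables, semigroup, spectral analysis, Lyapunov--Perron), but you diverge in the modulation strategy, and that divergence introduces a genuine technical problem.

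\textbf{Where you differ from the paper.} You linearize around a \emph{fixed} $\Psi^a$ and use the semigroup generated by $\mb L_0 = \overline{\tilde{\mb L}} + \mb L'_0$, absorbing $[\mb L'_a - \mb L'_0]\Phi$ into the nonlinearity $\mb F_a$; you then tune the pair $(T,a)\in\R^4$ simultaneously so that the Lyapunov--Perron correction along the whole unstable/center space $\langle\mb g_0,\mb g_1,\mb g_2,\mb g_3\rangle$ vanishes. The paper instead makes $a=a(\tau)$ a function of time, chooses it so that the $\mb Q_{a_\infty}$-component of $\Phi$ is killed dynamically (Lemma~\ref{lem:a}), uses the semigroup generated by $\mb L_{a_\infty}=\mb L+\mb L'_{a_\infty}$, and applies a Brouwer argument in $T$ alone (Lemma~\ref{lem:modifiedU}). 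Your route is conceptually cleaner --- one joint finite-dimensional matching --- but it requires the spectral analysis only at $a=0$, whereas the paper needs it for all small $a$ (via Corollary~\ref{cor:resLa} and Lemma~\ref{lem:speca}); in exchange, the paper's modulation makes the potential perturbation \emph{decay in time}, which is exactly what your version lacks.

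\textbf{The genuine gap.} With $\mb L_0$ in the semigroup and $a$ fixed, the term $[\mb L'_a - \mb L'_0]\Phi(\sigma)$ satisfies only $\|[\mb L'_a - \mb L'_0]\Phi(\sigma)\|\lesssim|a|\,\|\Phi(\sigma)\|$; the coefficient $|a|$ does not decay in $\sigma$. In the Duhamel integral on the stable part this gives
\[
\int_0^\tau e^{-\omega(\tau-\sigma)}\,|a|\,\delta\,e^{-\omega\sigma}\,d\sigma
\;=\;|a|\,\delta\,\tau\,e^{-\omega\tau},
\]
which is \emph{not} $O(e^{-\omega\tau})$, so the map $\mb K$ does not send the ball of $X=X_\omega$ into itself as you claim. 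The fix is to run the contraction in $X_{\omega'}$ for some $\tfrac1{p-1}<\omega'<\omega$, in which case the integral is $\lesssim\tfrac{|a|\delta}{\omega-\omega'}e^{-\omega'\tau}$, closing the scheme provided $|a|\ll\omega-\omega'$; but you would have to state this and verify the stated final decay rate $(T-t)^{1/(p-1)}$ still survives (it does, since $\omega'>\tfrac1{p-1}$). The paper sidesteps the whole issue because $\|\mb L'_{a(\tau)}-\mb L'_{a_\infty}\|\lesssim|a(\tau)-a_\infty|\lesssim\delta e^{-\omega_p\tau}$ (Lemma~\ref{lem:est1}), so there the dangerous term is already $O(e^{-2\omega_p\tau})$.

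\textbf{Two smaller points.} (i) You rightly flag the spectral statement $\sigma(\mb L_0)\cap\{\Re\lambda\ge-\kappa_p\}=\{0,1\}$ with the correct multiplicities as the hardest step and only sketch it; the paper proves it by reducing, via spherical harmonics, to an explicitly solvable hypergeometric ODE (Lemma~\ref{lem:spec0}) and a nontrivial argument ruling out Jordan blocks (Lemma~\ref{lem:alg0}) --- the latter is not implied by geometric multiplicity counts and should not be glossed over in a non-self-adjoint setting. (ii) Your claim that $T\mapsto\mb P\,\mb U(T,\mb v)$ is $C^1$ because $\rg\mb P$ consists of smooth functions is not automatic: the $\mc H=H^2\times H^1$ pairing cannot be integrated by parts freely owing to boundary terms, so it is safer (and sufficient) to use continuity plus a Brouwer argument as the paper does.
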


Slightly oversimplifying matters, Theorem \ref{thm:main} states that small perturbations
of the blowup solution $u_{1,0}(t,x)=c_p (1-t)^{-\frac{2}{p-1}}$ 
lead to solutions that converge back to
$u_{1,0}$ modulo symmetries of the equation.
Some remarks are in order.
\begin{itemize} 

\item The normalization factors in the estimates are natural given the fact that
\begin{align*}
\|u_{T,a}(t,\cdot)\|_{\dot H^k(\B_{T-t}^3)}&\simeq (T-t)^{s_p-k} \\
\|\partial_t u_{T,a}(t,\cdot)\|_{\dot H^k(\B_{T-t}^3)}&\simeq (T-t)^{s_p-1-k}
\end{align*}
for $k\in \N_0$ if $a\not=0$. 

\item The decay rates stated in Theorem \ref{thm:main} are not sharp. 
Inspection of the proof shows that one has in fact 
$(T-t)^{\frac{2}{p-1}-\epsilon}$ for any $\epsilon>0$ (the implicit constants depend on 
$\epsilon$, though).
It might even be possible to improve the decay rate to 
$(T-t)^{\frac{2}{p-1}}$, but this would
require a more detailed approach.

\item The topology in which we consider the problem is optimal in the class of Sobolev
spaces $H^k\times H^{k-1}$ with integer exponent $k$. Simple scaling arguments
show that it should be possible to lower the degree of regularity to $H^{s_p+\epsilon}
\times H^{s_p+\epsilon-1}$ for any $\epsilon>0$.
It is an intriguing question if a result like Theorem \ref{thm:main} can be proved
at the critical regularity level $H^{s_p}\times H^{s_p-1}$.
This is an open problem.

\item The assumption $p>3$ is not really needed and one can prove essentially the same result
for all $p>1$. However, in the subconformal range $p\in (1,3]$ a stronger statement
is true since one has stability in the weaker
topology $H^1\times L^2$. This was shown by Merle and Zaag \cite{MerZaa14a, MerZaa14b}.
\end{itemize}

\subsection{Related work}
There is a lot of activity in the study of blowup for wave equations.
Many recent works focus on energy-critical equations where type II blowup solutions
are studied that emerge from a dynamical rescaling of a soliton, 
e.g.~\cite{KenMer08, KriSchTat09, DonKri13, DonHuaKriSch14, KriSch14,
DuyKenMer12c, DuyKenMer13, DuyKenMer14b, DuyMer08, DuyKenMer11, DuyKenMer12b,
DuyKenMer12a, HilRap12}, see also
\cite{KriWon14, KriNakSch14, KriNakSch13a, KriNakSch13b}.

In the supercritical case, blowup is typically
self-similar.
The fact that the free energy cannot be used to control the nonlinearity causes serious
problems and thus,
a detailed investigation of the 
supercritical regime has only recently begun, see 
e.g.~\cite{DuyKenMer14a, KilVis11b, KilVis11a, KenMer11b, KenMer11a,
Bul12, Bul14, DodLaw14} for conditional global existence and scattering results. 
The recent work of Krieger and Schlag \cite{KriSch14a} 
provides a novel construction of large solutions
to the defocusing equation in the supercritical case.
The precise form of blowup was also investigated in a numerical study
by Bizo\'n, Chmaj, and Tabor \cite{BizChmTab04}.
Furthermore, we would like to mention a very exciting new development triggered by
a remarkable paper by Merle, Rapha\"el and Rodnianski on the nonlinear Schr\"odinger
equation \cite{MerRapRod14} 
which shows the existence of blowup
via a rescaling soliton in the supercritical regime for sufficiently high dimensions.
The corresponding installment for the wave
equation is due to Collot \cite{Col14}.  

In their fundamental work \cite{MerZaa03, MerZaa05}, 
Merle and Zaag proved the universality of the self-similar
blowup rate for Eq.~\eqref{eq:main} in the subconformal range $p\leq 3$, see also
\cite{KilStoVis14, HamZaa13} for blowup bounds in the full subcritical region $p<5$.
In the one-dimensional case (which is subcritical for all $p>1$) there
is a series of papers which provides a fairly complete picture 
of the blowup behavior \cite{MerZaa07, MerZaa08, MerZaa12a, MerZaa12b}.
Merle and Zaag were also able to extend some of these results to higher 
dimensions and in particular
they proved a result which is similar to ours but restricted to the
subconformal range, that is, $p\leq 3$ \cite{MerZaa14a, MerZaa14b}.
Furthermore, their stability result holds in the energy topology $H^1\times L^2$.
We would like to stress that under the same restriction $p\leq 3$, 
our method can easily be adapted to obtain stability
in $H^1\times L^2$ as well.
In fact, we have already proved this result for radial data \cite{DonSch12}.
We also remark in passing that for pure scaling reasons, 
stability in the energy topology can hold only if $p\leq 5$.
In this paper our main focus is on the supercritical range $p>5$ and this necessitates the
use of a stronger topology.

\subsection{Outline of the proof}
Our earlier works, where we have proved similar stability results 
\cite{DonSchAic11, Don11, DonSch12, DonSch14, Don14},
were restricted to 
\emph{spherically symmetric data}. 
In the present paper we do not impose any symmetry assumptions and this
requires a number of fundamentally new ideas compared to \cite{DonSch14}.
We outline the main steps of the proof.

\begin{itemize}
\item By translation symmetry, it suffices to consider the case $x_0=0$.
As in our earlier works, we use similarity coordinates 
\[ \tau=-\log(T-t)+\log T,\qquad \xi=\frac{x}{T-t}, \]
to study the evolution. Here, $T>0$ is a free parameter, to begin with. 
We write Eq.~\eqref{eq:main} as an abstract evolution problem of the form
\begin{equation}
\label{eq:introPsi}
 \partial_\tau \Psi(\tau)=\mb L\Psi(\tau)+\mb N(\Psi(\tau)) 
 \end{equation}
for a function $\Psi$ on $[0,\infty)$ with values in the Hilbert space 
$\mc H=H^2\times H^1(\B^3)$.
The (unbounded) linear operator $\mb L$ on $\mc H$ represents the free wave equation and $\mb N$
is the nonlinearity.
We reiterate that $\mb L$ is highly nonself-adjoint. From a heuristic point of view this is
clear since $\mb L$ is the generator of the free wave evolution in a backward lightcone.
Consequently, it must encode radiative properties which necessitates the existence
of nonreal spectrum.

\item We show that $\mb L$ generates a strongly-continuous semigroup $\mb S(\tau)$
on $\mc H$ with the sharp decay estimate $\|\mb S(\tau)\|_{H^2\times H^1}\lesssim e^{-\frac{2}{p-1}\tau}$.
This result follows by an application of the Lumer-Phillips Theorem.
In order to verify the hypothesis, we have to find an inner product $(\cdot|\cdot)$
on $\mc H$
which satisfies $\Re(\mb L\mb u|\mb u)\leq -\frac{2}{p-1}(\mb u|\mb u)$.
This is delicate since there is no straightforward way how to construct $(\cdot|\cdot)$.
We use a higher energy of the wave equation augmented with some carefully chosen boundary
terms at the lightcone.
Furthermore, one needs to show that the range of $\lambda-\mb L$ is dense in $\mc H$
for some $\lambda>-\frac{2}{p-1}$.
This boils down to solving a degenerate elliptic problem on the unit ball $\B^3$.
The degeneracy at the boundary $\partial \B^3=\S^2$ comes from the fact that 
$[0,\infty)\times \S^2$ is a characteristic surface (the boundary of a lightcone).
Consequently, standard elliptic theory is not applicable.
We proceed by a decomposition in spherical harmonics and reduce the problem to a system
of ordinary differential equations.

 \item Next, we insert the modulation ansatz $\Psi(\tau)=\Psi_{a(\tau)}+\Phi(\tau)$
 into Eq.~\eqref{eq:introPsi}, where $\Psi_a$ corresponds to the blowup solution
 $u_{T,a}$ in the new coordinates.
 Thus, we allow the rapidity to depend on $\tau$.
 There is no dependence on $T$ since the coordinate transformation $(t,x)\mapsto (\tau,\xi)$
 is $T$-dependent and chosen in such a way that Eq.~\eqref{eq:introPsi} and $\Psi_a$ are 
 independent of $T$.
 The modulation ansatz leads to an equation of the form
 \begin{align*}
 \partial_\tau  \Phi(\tau)-\mb L \Phi(\tau)-\mb L'_{a(\tau)}\Phi(\tau)
 =\mb N_{a(\tau)}(\Phi(\tau)) 
 -\partial_\tau \Psi_{a(\tau)} 
 \end{align*}
 where $\mb N_{a(\tau)}(\mb u)=\mb N(\Psi_{a(\tau)}+\mb u)-\mb N(\Psi_{a(\tau)})
 -\mb L'_{a(\tau)}\mb u$ and $\mb L'_{a(\tau)}$ is the linearization of $\mb N$
 at $\Psi_{a(\tau)}$.
We cannot deal with a $\tau$-dependent ``potential term'' on the left-hand side and therefore,
assuming $\lim_{\tau\to\infty}a(\tau)=a_\infty$, we rewrite this equation as
\begin{align}
\label{eq:introPhi}
\partial_\tau  &\Phi(\tau)-\mb L \Phi(\tau)-\mb L'_{a_\infty}\Phi(\tau) \nonumber \\
 &=[\mb L'_{a(\tau)}-\mb L'_{a_\infty}]\Phi(\tau)+\mb N_{a(\tau)}(\Phi(\tau)) 
 -\partial_\tau \Psi_{a(\tau)} .
\end{align}
Very conveniently, the operator $\mb L'_{a_\infty}$ turns out to be compact.
We remark that the particular version of modulation theory we use is inspired by the work
of Krieger and Schlag on the critical wave equation \cite{KriSch07}.

\item By the Bounded Perturbation Theorem it immediately follows that $\mb L+\mb L'_{a_\infty}$
generates a semigroup $\mb S_{a_\infty}(\tau)$, but in order to obtain a useful growth bound,
a detailed spectral analysis of the generator 
$\mb L_{a_\infty}:=\mb L+\mb L'_{a_\infty}$ is necessary.
Fortunately, after decomposition in spherical harmonics,
the spectral equation for $\mb L_0$ can be solved explicitly in terms of
hypergeometric functions. 
By a spectral-theoretic perturbation argument we obtain sufficient
information on the spectrum of $\mb L_{a_\infty}$, provided $a_\infty$ is small.
In particular, we find that $\mb S_{a_\infty}(\tau)$ has a $4$-dimensional unstable subspace
which is spanned by the generators of time translations and Lorentz boosts.
Transversal to this unstable subspace, the semigroup $\mb S_{a_\infty}(\tau)$ decays
exponentially.
We construct a suitable spectral projection onto the unstable subspace 
that commutes with the semigroup.

\item By using Duhamel's principle, we rewrite Eq.~\eqref{eq:introPhi} as an integral
equation of the form
\begin{align}
\label{eq:introPhiweak}
 \Phi(\tau)=&\mb S_{a_\infty}(\tau)\Phi(0) \nonumber \\
 &+\int_0^\tau \mb S_{a_\infty}(\tau-\sigma)
 \big [ \mb L'_{a(\sigma)}-\mb L'_{a_\infty} \big ]\Phi(\sigma)d\sigma \nonumber \\
 &+\int_0^\tau \mb S_{a_\infty}(\tau-\sigma)
 \big [\mb N_{a(\sigma)}(\Phi(\sigma))-\partial_\sigma \Psi_{a(\sigma)}\big ]d\sigma.
 \end{align}
 The nonlinear terms are controlled by standard Sobolev embeddings and
we construct a decaying solution to Eq.~\eqref{eq:introPhiweak} 
by a nested fixed point argument.
More precisely,
we first derive a suitable equation for the modulation parameters $a(\tau)$ and 
via a fixed point
argument we show that $a(\tau)$ can be chosen in such a way that the instability
due to the Lorentz symmetry gets suppressed.
For the time translation symmetry we use a different approach and employ a version of
the Lyapunov-Perron method where one modifies the equation by a suitable correction
term in order to stabilize the evolution.
The modified equation is then solved by a fixed point argument.
In a second step one shows that the correction term vanishes provided one chooses
the blowup time $T$ correctly.
This is done by a Brower-type argument.
\end{itemize}

\subsection{Notation}
The arguments for functions defined on Minkowski space are numbered by $0,1,2,3$
and we write $\partial_0$, $\partial_1$, $\partial_2$, $\partial_3$ for the respective derivatives.
We use the notation $\partial_y$ for the derivative with respect to the variable $y$.
We employ Einstein's summation convention throughout with latin indices running from $1$
to $3$, unless otherwise stated. 
The symbol $\B^d_R(x_0)$ denotes the open ball of radius $R$ 
in $\R^d$, centered at $x_0\in \R^d$.
For brevity we set $\B^d_R:=\B^d_R(0)$, $\B^d:=\B^d_1$, 
and $\S^{d-1}:=\partial \B^d$.

The letter $C$ (possibly with indices to indicate dependencies) denotes a generic
positive constant which may have a different value at each occurrence.  
The symbol $a\lesssim b$ means $a\leq Cb$ and we abbreviate $a\lesssim b\lesssim a$
by $a\simeq b$. We write $f(x)\sim g(x)$ for $x\to a$ if $\lim_{x\to a}\frac{f(x)}{g(x)}=1$.

For a closed linear operator $\mb L$ on a Banach space we denote its domain by $\mc D(\mb L)$, its spectrum
by $\sigma(\mb L)$, and its point spectrum by $\sigma_p(\mb L)$.
We write $\mb R_{\mb L}(z):=(z-\mb L)^{-1}$ for $z\in \rho(\mb L)=\C
\backslash \sigma(\mb L)$.
The space of bounded operators on a Banach space $\mc X$ is denoted by $\mc B(\mc X)$.

\section{Preliminary transformations}

\noindent We start with the wave equation
\begin{equation}
\label{eq:wavep}
 (-\partial_t^2+\Delta_x)u(t,x)=-u(t,x)|u(t,x)|^{p-1},\quad p>3. 
 \end{equation}
The aforementioned $4$-parameter family of blowup solutions $u_{T,a}$ is explicitly given by
\begin{equation}
\label{eq:blowupsol} 
u_{T,a}(t,x)=c_p\big [A_0(a)(T-t)-A_j(a)x^j]^{-\frac{2}{p-1}}
\end{equation}
where $a=(a^1,a^2,a^3) \in \R^3$ and
\begin{align*}
A_0(a)&=\cosh a^1 \cosh a^2 \cosh a^3 \\
A_1(a)&=\sinh a^1 \cosh a^2 \cosh a^3 \\
A_2(a)&=\sinh a^2 \cosh a^3 \\\
A_3(a)&=\sinh a^3.
\end{align*}
Our intention is to study the time evolution of small perturbations of $u_{1,0}$.
Thus, we consider the Cauchy problem
\begin{equation}
\label{eq:Cauchy}
\left \{ \begin{array}{l}
(-\partial_t^2+\Delta_x)u(t,x)=-u(t,x)|u(t,x)|^{p-1},\quad t>0 \\
u[0]=u_{1,0}[0]+(\tilde f,\tilde g)
\end{array} \right.
\end{equation}
where the functions $\tilde f, \tilde g$ are small
in a suitable sense.

Now we introduce the similarity variables 
\[ \tau=-\log(T-t)+\log T,\qquad \xi=\frac{x}{T-t}. \]
The derivatives transform according to
\[ \partial_t=\frac{e^\tau}{T}(\partial_\tau+\xi^j\partial_{\xi^j}),
\quad \partial_{x^j}=\frac{e^\tau}{T} \partial_{\xi^j} \]
and we obtain
\[ \partial_t^2=\frac{e^{2\tau}}{T^2}(\partial_\tau^2+\partial_\tau+2\xi^j\partial_{\xi^j}\partial_\tau
+\xi^j \xi^k \partial_{\xi^j}\partial_{\xi^k}+2\xi^j\partial_{\xi^j}) \]
as well as $\partial_{x_j}\partial_{x^j}=\frac{e^{2\tau}}{T^2}\partial_{\xi_j}\partial_{\xi^j}$.
Consequently, Eq.~\eqref{eq:wavep} is equivalent to
\begin{align*}
\big [\partial_\tau^2&+\partial_\tau+2\xi^j\partial_{\xi^j}\partial_\tau
-(\delta^{jk}-\xi^j \xi^k)\partial_{\xi^j}\partial_{\xi^k}+2\xi^j\partial_{\xi^j}\big ]
U(\tau,\xi) \\
&=T^2e^{-2\tau}U(\tau,\xi)|U(\tau,\xi)|^{p-1}
\end{align*}
where $U(\tau,\xi)=u(T-Te^{-\tau},Te^{-\tau}\xi)$.
In order to remove the $\tau$-dependent factor on the right-hand side we rescale
and set 
\[ U(\tau,\xi)=T^{-\frac{2}{p-1}}e^{\frac{2}{p-1}\tau}\psi(\tau,\xi) \] which yields
\begin{align}
\label{eq:psi}
\Big [\partial_\tau^2&+\tfrac{p+3}{p-1}\partial_\tau+2\xi^j\partial_{\xi^j}\partial_\tau
-(\delta^{jk}-\xi^j \xi^k)\partial_{\xi^j}\partial_{\xi^k} \nonumber \\
&+2\tfrac{p+1}{p-1}\xi^j \partial_{\xi^j}
+2\tfrac{p+1}{(p-1)^2}\Big ]\psi(\tau,\xi) 
=\psi(\tau,\xi)|\psi(\tau,\xi)|^{p-1}.
\end{align}
By applying the above transformations to the blowup solution $u_{T,a}$ defined in 
Eq.~\eqref{eq:blowupsol},
we obtain the $3$-parameter family $\psi_a$ of \emph{static} solutions to Eq.~\eqref{eq:psi}
given by
\begin{equation}
\label{eq:psi_a}
 \psi_a(\xi)=c_p \left [A_0(a)-A_j(a)\xi^j\right ]^{-\frac{2}{p-1}}. 
 \end{equation}
Note that the dependence on $T$ has 
disappeared since we have used a coordinate transformation adapted to the blowup time $T$.
As a consequence, the dependence on $T$ is hidden as a 
symmetry of the equation and will reappear later as an instability
of the evolution one has to deal with. 
We also remark that for $a \in \R^3$ small, the solution $\psi_a$ is smooth since $A_0(a)=1+O(|a|)$
and $A_j(a)=O(|a|)$
if $|a|\lesssim 1$.

In order to rewrite Eq.~\eqref{eq:psi} as a first-order system in time, we introduce the variables
\[ \psi_1:=\psi,\qquad \psi_2(\tau,\xi)
:=\partial_\tau \psi(\tau,\xi)+\xi^j\partial_j \psi(\tau,\xi)+\tfrac{2}{p-1}\psi(\tau,\xi) \]
where the definition of $\psi_2$ is motivated by the fact that $\psi_2$ 
corresponds to the $t$-derivative of $u$, i.e.,
\[ \partial_0 u(T-Te^{-\tau},Te^{-\tau}\xi)=T^{-\frac{p+1}{p-1}}e^{\frac{p+1}{p-1}\tau}\big [\partial_\tau
+\xi^j\partial_{\xi^j}+\tfrac{2}{p-1}\big ]\psi(\tau,\xi). \]
Then Eq.~\eqref{eq:psi} transforms into the system
\begin{align}
\label{eq:syspsi}
\partial_0 \psi_1&=-\xi^j \partial_j \psi_1-\tfrac{2}{p-1}\psi_1+\psi_2 \nonumber \\
\partial_0 \psi_2&=\partial^j \partial_j \psi_1-\xi^j \partial_j \psi_2-\tfrac{p+1}{p-1}\psi_2
+\psi_1 |\psi_1|^{p-1}
\end{align}
and from Eq.~\eqref{eq:psi_a} we derive the family of static solutions
\begin{align}
\label{eq:syspsi_a} 
\psi_{a,1}(\xi)&=c_p \left [A_0(a)-A_j(a)\xi^j\right ]^{-\frac{2}{p-1}} \nonumber \\
\psi_{a,2}(\xi)&=\frac{2c_p}{p-1}A_0(a) 
\left [A_0(a)-A_j(a)\xi^j\right ]^{-\frac{p+1}{p-1}}.
\end{align}
By noting that $A_0(a)^2-A_j(a)A^j(a)=1$ one may also check directly that 
$(\psi_{a,1},\psi_{a,2})$
is indeed a solution of Eq.~\eqref{eq:syspsi}.
For the initial data we obtain
\begin{align}
\label{eq:data}
\psi_1(0,\xi)&=T^{\frac{2}{p-1}}[\psi_{0,1}(T\xi)+\tilde f(T\xi)] \nonumber \\
\psi_2(0,\xi)&=T^{\frac{p+1}{p-1}}[\psi_{0,2}(T\xi)+\tilde g(T\xi)].
\end{align}
We emphasize that the only trace of the parameter $T$ is in the initial data.

\section{A semigroup formulation for the linear evolution}

\subsection{Function spaces}
We will make use of semigroup theory to treat the evolution problem Eq.~\eqref{eq:syspsi}.
A key ingredient is the construction of a suitable 
inner product on $H^2 \times H^1(\B^3)$ that yields the sharp
decay for the free evolution.
We start by considering the two sesquilinear forms 
\begin{align*}
(\mb u | \mb v)_1&:=\int_{\B^3}\partial^k \partial^j u_1 \overline{\partial_k \partial_j v_1}
+\int_{\B^3}\partial^j u_2 \overline{\partial_j v_2}
+\int_{\S^2}\partial^j u_1\overline{\partial_j v_1}d\sigma \\
(\mb u | \mb v)_2&:=\int_{\B^3}\Delta u_1 \overline{\Delta v_1}
+\int_{\B^3}\partial^j u_2 \overline{\partial_j v_2}+\int_{\S^2}u_2\overline{v_2} d\sigma
\end{align*}
on the space
$\tilde{\mc H}:=C^2(\overline{\B^3})
\times C^1(\overline{\B^3})$, where $\sigma$ denotes the surface measure on the 2-sphere.
Obviously, both sesquilinear forms are derived from a standard (higher) energy of the free
wave equation but neither of them defines an inner product on $\tilde{\mc H}$.
This will be fixed later. 
Next, we define an operator that represents the free part in Eq.~\eqref{eq:syspsi}
on the right-hand side.
We set 
\[ \tilde{\mb L}\mb u(\xi):=\left (\begin{array}{c}
-\xi^j \partial_j u_1(\xi)-\frac{2}{p-1}u_1(\xi)+u_2(\xi) \\
\partial^j \partial_j u_1(\xi)-\xi^j\partial_j u_2(\xi)-\frac{p+1}{p-1}u_2(\xi) 
\end{array} \right ). \]
At this point, $\tilde{\mb L}$ should be viewed as a formal differential operator;
we will specify a suitable domain later.
The key properties of the sesquilinear forms $(\cdot|\cdot)_1$ and $(\cdot|\cdot)_2$
are stated in the following lemma.

\begin{lemma}
\label{lem:sesq}
We have the estimates
\[ \Re(\tilde{\mb L}\mb u | \mb u)_j\leq -(\tfrac{2}{p-1}+\tfrac12)(\mb u|\mb u)_j \]
for all $\mb u \in C^3(\overline{\B^3})\times C^2(\overline{\B^3})$ and $j \in \{1,2\}$.
\end{lemma}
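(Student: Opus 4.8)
The plan is to establish both estimates by integrating by parts on $\B^3$ while carefully tracking the surface integrals over $\S^2$; the particular boundary terms built into $(\cdot|\cdot)_1$ and $(\cdot|\cdot)_2$ are precisely what is needed to absorb those. Three elementary ingredients enter throughout: the commutator relations $[\xi^k\partial_k,\partial_j]=-\partial_j$ and $[\Delta,\xi^k\partial_k]=2\Delta$; the identity
\[ \Re\int_{\B^3}(\xi^k\partial_k g)\,\overline{g}=-\tfrac32\int_{\B^3}|g|^2+\tfrac12\int_{\S^2}|g|^2\,d\sigma, \]
valid for $g\in C^1(\overline{\B^3})$ (one integration by parts, using that $\xi$ is the outward unit normal on $\S^2$); and Green's identity $\int_{\B^3}(\Delta f)\overline{g}=-\int_{\B^3}\partial^j f\,\overline{\partial_j g}+\int_{\S^2}(\xi^k\partial_k f)\,\overline{g}\,d\sigma$. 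Since $(\cdot|\cdot)_1$ involves \emph{second} derivatives of $u_1$, I first differentiate the operator: one checks directly that $\partial_m(\tilde{\mb L}\mb u)=\mb M(\partial_m u_1,\partial_m u_2)$, where $\mb M\mb w:=\big(-\xi^k\partial_k w_1-(1+\tfrac{2}{p-1})w_1+w_2,\ \Delta w_1-\xi^k\partial_k w_2-(1+\tfrac{p+1}{p-1})w_2\big)$. Hence, with $\mb w^{(m)}:=(\partial_m u_1,\partial_m u_2)$ and $\langle\mb a|\mb b\rangle:=\int_{\B^3}\partial^j a_1\overline{\partial_j b_1}+\int_{\B^3}a_2\overline{b_2}+\int_{\S^2}a_1\overline{b_1}\,d\sigma$, one has $(\tilde{\mb L}\mb u|\mb u)_1=\sum_{m=1}^3\langle\mb M\mb w^{(m)}|\mb w^{(m)}\rangle$ and $(\mb u|\mb u)_1=\sum_{m=1}^3\langle\mb w^{(m)}|\mb w^{(m)}\rangle$, so for $j=1$ it suffices to prove $\Re\langle\mb M\mb w|\mb w\rangle\le-(\tfrac12+\tfrac{2}{p-1})\langle\mb w|\mb w\rangle$. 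For $j=2$ no such reduction is needed, because $(\cdot|\cdot)_2$ only sees the scalar $\Delta u_1$, which may be treated directly.

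For the bulk contribution I expand $\Re(\tilde{\mb L}\mb u|\mb u)_j$ (resp.\ $\Re\langle\mb M\mb w|\mb w\rangle$), use the commutators to rewrite $\xi^k\partial_k\Delta u_1$, $\xi^k\partial_k\partial_j u_2$ and so on, and apply the $\xi^k\partial_k$-identity to each square term. The linear damping coefficients combine with the $+\tfrac32$ from that identity and the shift ($-2$ from $[\Delta,\xi^k\partial_k]$, resp.\ $-1$ from $[\xi^k\partial_k,\partial_j]$) to produce exactly the coefficient $-(\tfrac12+\tfrac{2}{p-1})$ in front of $\int_{\B^3}|\Delta u_1|^2$ and $\int_{\B^3}|\nabla u_2|^2$ (resp.\ $\int_{\B^3}|\nabla w_1|^2$ and $\int_{\B^3}|w_2|^2$), since $-\tfrac{2}{p-1}-2+\tfrac32=-\tfrac{p+1}{p-1}-1+\tfrac32=-(\tfrac12+\tfrac{2}{p-1})$, and likewise for $\mb M$. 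The only terms coupling the two components are of the shape $\Re\int_{\B^3}(\Delta u_2)\overline{\Delta u_1}$ and $\Re\int_{\B^3}\partial^j u_2\,\overline{\partial_j\Delta u_1}$ (resp.\ $\Re\int_{\B^3}(\Delta w_1)\overline{w_2}$ and $\Re\int_{\B^3}\partial^j w_2\,\overline{\partial_j w_1}$); Green's identity applied to one of them makes the two bulk cross terms cancel and leaves only a boundary remainder, $\Re\int_{\S^2}(\xi^k\partial_k u_2)\overline{\Delta u_1}\,d\sigma$ (resp.\ $\Re\int_{\S^2}(\xi^k\partial_k w_1)\overline{w_2}\,d\sigma$). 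Thus the bulk part of the left-hand side is exactly $-(\tfrac12+\tfrac{2}{p-1})$ times the bulk part of the norm, and the whole estimate comes down to bounding the collected surface terms by $-(\tfrac12+\tfrac{2}{p-1})\int_{\S^2}|u_2|^2\,d\sigma$ (resp.\ $-(\tfrac12+\tfrac{2}{p-1})\int_{\S^2}|w_1|^2\,d\sigma$).

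The boundary estimate is the heart of the matter, and I expect it to be the main obstacle. On $\S^2$, decompose each gradient into its normal and tangential parts, $\partial_j f|_{\S^2}=\xi_j(\xi^k\partial_k f)+T_j$ with $\xi^jT_j=0$, so $|\nabla f|^2|_{\S^2}=|\xi^k\partial_k f|^2+|T|^2\ge|\xi^k\partial_k f|^2$. Discarding the tangential parts $|T|^2$ (which enter with a favorable sign), the full collection of surface terms is bounded, pointwise on $\S^2$, by a quadratic form in the three scalars $\phi:=\Delta u_1|_{\S^2}$, $\varrho:=(\xi^k\partial_k u_2)|_{\S^2}$, $u_2|_{\S^2}$ (for $j=2$), explicitly by
\[ -\tfrac12|\phi|^2-\tfrac12|\varrho|^2+\Re(\varrho\overline{\phi})+\Re(u_2\overline{\phi})-\Re(u_2\overline{\varrho})-\tfrac{p+1}{p-1}|u_2|^2, \]
which, using $\tfrac{p+1}{p-1}=1+\tfrac{2}{p-1}$, equals $-(\tfrac12+\tfrac{2}{p-1})|u_2|^2-\tfrac12|\phi-\varrho-u_2|^2\le-(\tfrac12+\tfrac{2}{p-1})|u_2|^2$. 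For $j=1$ the same manipulations, with $\varrho:=(\xi^k\partial_k w_1)|_{\S^2}$, reduce the boundary estimate to the pointwise inequality
\[ -\tfrac12|\varrho|^2-\tfrac12|w_2|^2+\Re(\varrho\overline{w_2})-\Re(\varrho\overline{w_1})+\Re(w_2\overline{w_1})\le\tfrac12|w_1|^2, \]
which holds because $\tfrac12|w_1|^2$ minus the left-hand side equals $\tfrac12|w_1+\varrho-w_2|^2\ge0$. Integrating these pointwise bounds over $\S^2$ and adding the bulk contribution gives the lemma. The nontrivial point is exactly the recognition of this structure: the boundary terms $\int_{\S^2}\partial^ju_1\overline{\partial_jv_1}\,d\sigma$ and $\int_{\S^2}u_2\overline{v_2}\,d\sigma$ in the two sesquilinear forms are chosen so that, after dropping the manifestly good tangential contributions, the leftover surface form is a negative multiple of the relevant $L^2(\S^2)$-norm minus a perfect square — identifying the correct boundary terms is the delicate step, whereas the bulk side is a routine sequence of integrations by parts.
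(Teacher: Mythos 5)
Your proof is correct and takes essentially the same route as the paper's: both integrate by parts in the bulk, collect the boundary integrals over $\S^2$, and close the surface estimate with the perfect-square inequality $\Re(a\bar b)+\Re(a\bar c)-\Re(b\bar c)\leq\tfrac12(|a|^2+|b|^2+|c|^2)$ (equivalently, recognizing the boundary form as a negative multiple of the relevant $L^2(\S^2)$ term minus $\tfrac12|\cdot|^2$ of a linear combination). Your reduction of the $j=1$ case to the first-order operator $\mb M=\tilde{\mb L}-\mb I$ acting on $\mb w^{(m)}=(\partial_m u_1,\partial_m u_2)$ is a tidy unifying reorganization, and your normal/tangential decomposition on $\S^2$ plays exactly the role of the paper's Cauchy--Schwarz step $|\omega^k\partial_k f|^2\leq\omega^\ell\omega_\ell\,\partial^k f\overline{\partial_k f}$, but the substance of the argument is the one the paper carries out directly with second derivatives.
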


\begin{proof}
We write $[\tilde{\mb L} \mb u]_j$, $j\in \{1,2\}$, for the $j$-th component of $\tilde{\mb L}\mb u$.
Since 
\[ 2\Re[\xi^j \partial_j f(\xi)\overline{f(\xi)}]=\partial_{\xi^j}[\xi^j|f(\xi)|^2]-3|f(\xi)|^2 \]
and
\[ \partial_{\xi_k} \partial_{\xi_j} [\xi^i \partial_i f(\xi)]=2\partial^k \partial^j f(\xi)
+\xi^i \partial_i \partial^k \partial^j f(\xi), \] the
divergence theorem yields
\begin{align*}
\Re \int_{\B^3}\partial^k \partial^j [\tilde{\mb L}\mb u]_1\overline{\partial_k\partial_j u_1}
&=-\left (\tfrac{2}{p-1}+\tfrac12 \right )
\int_{\B^3}\partial^k\partial^j u_1 \overline{\partial_k \partial_j u_1} \\
&\quad -\tfrac12 \int_{\S^2}\partial^k \partial^j u_1(\omega)
\overline{\partial_k \partial_j u_1(\omega)} d\sigma(\omega) \\
&\quad +\Re \int_{\B^3}\partial^k \partial^j u_2 \overline{\partial_j \partial_k u_1}.
\end{align*}
Analogously, we find
\begin{align*}
\Re \int_{\B^3}\partial^j [\tilde{\mb L}\mb u]_2\overline{\partial_j u_2}&=
-\Re \int_{\B^3}\partial^k \partial^j u_1\overline{\partial_k\partial_j u_2} \\
&\quad +\Re \int_{\S^2}\omega^j \partial_j \partial^k u_1(\omega)
\overline{\partial_k u_2(\omega)}d\sigma(\omega) \\
&\quad -\left (\tfrac{2}{p-1}+\tfrac12 \right )\int_{\B^3}\partial^j u_2 \overline{\partial_j u_2} \\
&\quad -\tfrac12 \int_{\S^2}\partial^j u_2(\omega)\overline{\partial_j u_2(\omega)}d\sigma(\omega).
\end{align*}
For the boundary term we obtain
\begin{align*}
\Re \int_{\S^2}\partial^j [\tilde{\mb L}\mb u]_1(\omega)\overline{\partial_j u_1(\omega)}d\sigma(\omega)
&=-\left (\tfrac{2}{p-1}+1\right )\int_{\S^2}\partial^j u_1\overline{\partial_j u_1}d\sigma(\omega) \\
&\quad -\Re\int_{\S^2}\omega^k \partial_k \partial^j u_1(\omega) \overline{\partial_j u_1(\omega)}d\sigma(\omega) \\
&\quad +\Re\int_{\S^2}\partial^j u_2(\omega)\overline{\partial_ju_1(\omega)}d\sigma(\omega).
\end{align*}
Summing up, we infer
\[ \Re(\tilde{\mb L}\mb u|\mb u)_1\leq -(\tfrac{2}{p-1}+\tfrac12)(\mb u|\mb u)_1+\int_{\S^2}
A(\omega)d\sigma(\omega), \]
where 
\begin{align*}
A(\omega)&=-\tfrac12 \partial^k \partial^j u_1(\omega)\overline{\partial_k \partial_j u_1(\omega)}
-\tfrac12 \partial^j u_1(\omega)\overline{\partial_j u_1(\omega)} \\
&\quad -\tfrac12 \partial^j u_2(\omega)\overline{\partial_j u_2(\omega)}  \\
&\quad -\Re \left [\omega^k \partial_k \partial^j u_1(\omega)\overline{\partial_j u_1(\omega)} \right ]
+ \Re \left [\omega^k \partial_k \partial^j u_1(\omega)\overline{\partial_j u_2(\omega)} \right ]  \\
&\quad 
+\Re \left [\partial^j u_1(\omega)\overline{\partial_j u_2(\omega)}\right ].
\end{align*}
Now we use the inequality 
\[ \Re(a\overline b)+\Re(a\overline c)-\Re(b\overline c)\leq \tfrac12 |a|^2
+\tfrac12 |b|^2+\tfrac12 |c|^2, \qquad a,b,c \in \C, \]
which follows from $0\leq |a-b-c|^2$, to obtain
\begin{align*}
-\Re&\left [\omega^k \partial_k \partial_j u_1 \overline{\partial_j u_1} \right ]
+\Re \left [\omega^k \partial_k \partial_j u_1 \overline{\partial_j u_2} \right ]
+\Re \left [\partial_j u_1 \overline{\partial_j u_2} \right ] \\
&\leq \tfrac12 \left |\omega^k \partial_k \partial_j u_1 \right |^2
+\tfrac12 \left | \partial_j u_1 \right |^2+\tfrac12 \left | \partial_j u_2 \right |^2 \\
&\leq \tfrac12 \omega^{\ell} \omega_\ell \partial^k \partial_j u_1 
\overline{\partial_k \partial_j u_1}
+\tfrac12 \left | \partial_j u_1 \right |^2+\tfrac12 \left | \partial_j u_2 \right |^2 \\
&=\tfrac12 \partial^k \partial_j u_1 
\overline{\partial_k \partial_j u_1}
+\tfrac12 \partial_j u_1 \overline{\partial_j u_1}
+\tfrac12 \partial_j u_2 \overline{\partial_j u_2}
\end{align*}
for each $j\in \{1,2,3\}$.
Consequently, summation over $j$ yields 
$A(\omega)\leq 0$
and we arrive at the desired estimate for $(\tilde{\mb L}\mb u|\mb u)_1$.

In order to prove the claimed estimate for $(\tilde{\mb L}\mb u | \mb u)_2$,
we note that
\begin{align*}
\Re \int_{\B^3}\Delta [\tilde{\mb L}\mb u]_1 \overline{\Delta u_1}
&=-\Re \int_{\B^3}\xi^j\partial_j \Delta u_1(\xi) \overline{\Delta u_1(\xi)}d\xi \\
&\quad -(\tfrac{2}{p-1}+2)\int_{\B^3}|\Delta u_1|^2
+\Re \int_{\B^3}\Delta u_2 \overline{\Delta u_1} \\
&=-\tfrac12 \int_{\S^2}|\Delta u_1|^2 d\sigma
-(\tfrac{2}{p-1}+\tfrac12)\int_{\B^3}|\Delta u_1|^2 \\
&\quad +\Re \int_{\B^3}\Delta u_2 \overline{\Delta u_1}
\end{align*}
and
\begin{align*}
\Re  \int_{\B^3}\partial^j [\tilde{\mb L}\mb u]_2 \overline{\partial_j u_2}&=
\Re \int_{\B^3}\partial^j \Delta u_1\overline{\partial_j u_2} \\
&\quad -\Re \int_{\B^3}\xi^k \partial_k \partial^j u_2(\xi)\overline{\partial_j
u_2(\xi)}d\xi \\
&\quad - (\tfrac{2}{p-1}+2)\int_{\B^3}\partial^j u_2 \overline{\partial_j u_2} \\
&=-\Re \int_{\B^3}\Delta u_1 \overline{\Delta u_2} \\
&\quad +\Re\int_{\S^2}\Delta u_1(\omega)\overline{\omega^j \partial_j u_2(\omega)}
d\sigma(\omega) \\
&\quad -\tfrac12 \int_{\S^2}\partial^j u_2 \overline{\partial_j u_2}d\sigma
- (\tfrac{2}{p-1}+\tfrac12)\int_{\B^3}\partial^j u_2 \overline{\partial_j u_2}.
\end{align*}
The boundary term yields
\begin{align*}
\Re \int_{\S^2}[\tilde{\mb L}\mb u]_2 \overline{u_2}d\sigma&=
\Re \int_{\S^2}\Delta u_1 \overline{u_2}d\sigma -\Re\int_{\S^2}
\omega^j \partial_j u_2(\omega)\overline{u_2(\omega)}d\sigma(\omega) \\
&\quad -(\tfrac{2}{p-1}+1)\int_{\S^2}|u_2|^2 d\sigma.
\end{align*}
In summary, we obtain
\[
\Re(\tilde{\mb L}\mb u | \mb u)_2\leq -(\tfrac{2}{p-1}+\tfrac12)(\mb u|\mb u)_2
+\int_{\S^2}B(\omega)d\sigma(\omega) \]
where
\begin{align*}
B(\omega)&=-\tfrac12 |\Delta u_1(\omega)|^2 -\tfrac12 \partial^j u_2(\omega)
\overline{\partial_j u_2(\omega)}-\tfrac12 |u_2(\omega)|^2 \\
&\quad +\Re \left [ \Delta u_1(\omega)\overline{\omega^j\partial_j u_2(\omega)}
\right ]
+\Re \left [ \Delta u_1(\omega)\overline{u_2(\omega)} \right ] \\
&\quad -\Re \left [ \omega^j \partial_j 
u_2(\omega)\overline{u_2(\omega)}\right ] \\
&\leq 0.
\end{align*}
\end{proof}

In addition, we consider a third sesquilinear form which constitutes the missing piece for
a proper inner product. We set
\[ (\mb u|\mb v)_3:=\int_{\S^2}[\omega^j \partial_j u_1+u_1+u_2](\omega)d\sigma(\omega)
\int_{\S^2}\overline{[\omega^j\partial_j v_1+v_1+v_2](\omega)}d\sigma(\omega).
 \]
The following ``magic'' property (which, by the way, only works in 3 space
dimensions) is crucial.

\begin{lemma}
\label{lem:sesq3}
We have
\[ \Re(\tilde{\mb L}\mb u | \mb u)_3\leq -\tfrac{2}{p-1}(\mb u | \mb u)_3 \]
for all $\mb u \in C^3(\overline{\B^3})\times C^2(\overline{\B^3})$.
\end{lemma}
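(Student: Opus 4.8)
The plan is to exploit that $(\cdot|\cdot)_3$ is a rank-one sesquilinear form. Introduce the bounded linear functional
\[ \ell(\mb u):=\int_{\S^2}\big[\omega^j\partial_j u_1+u_1+u_2\big](\omega)\,d\sigma(\omega), \]
so that $(\mb u|\mb v)_3=\ell(\mb u)\overline{\ell(\mb v)}$; in particular $(\mb u|\mb u)_3=|\ell(\mb u)|^2\ge 0$ and $(\tilde{\mb L}\mb u|\mb u)_3=\ell(\tilde{\mb L}\mb u)\,\overline{\ell(\mb u)}$. Consequently the lemma — with equality, in fact — follows at once from the scalar identity
\[ \ell(\tilde{\mb L}\mb u)=-\tfrac{2}{p-1}\,\ell(\mb u), \]
valid for every $\mb u\in C^3(\overline{\B^3})\times C^2(\overline{\B^3})$, since it yields $\Re(\tilde{\mb L}\mb u|\mb u)_3=-\tfrac{2}{p-1}|\ell(\mb u)|^2=-\tfrac{2}{p-1}(\mb u|\mb u)_3$.

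To establish this identity I would compute the integrand of $\ell(\tilde{\mb L}\mb u)$ on $\S^2$ directly. Writing $r=|\xi|$, the Euler operator satisfies $\xi^j\partial_j=r\partial_r$, so on the sphere $\omega^j\partial_j f=\partial_r f|_{r=1}$. Differentiating $[\tilde{\mb L}\mb u]_1=-\xi^j\partial_j u_1-\tfrac{2}{p-1}u_1+u_2$ and restricting to $\S^2$ gives $\omega^j\partial_j[\tilde{\mb L}\mb u]_1=-\partial_r^2 u_1-(1+\tfrac{2}{p-1})\partial_r u_1+\partial_r u_2$. Adding $[\tilde{\mb L}\mb u]_1$ and $[\tilde{\mb L}\mb u]_2$ and collecting terms, the $\partial_r u_2$-contributions cancel, the coefficients of $u_1$ and of $u_2$ both collapse to $-\tfrac{2}{p-1}$, and the coefficient $-\tfrac{2p}{p-1}$ of $\partial_r u_1$ splits as $-\tfrac{2}{p-1}-2$, leaving
\[ \omega^j\partial_j[\tilde{\mb L}\mb u]_1+[\tilde{\mb L}\mb u]_1+[\tilde{\mb L}\mb u]_2
=-\tfrac{2}{p-1}\big(\partial_r u_1+u_1+u_2\big)+\big(\Delta u_1-\partial_r^2 u_1-2\partial_r u_1\big) \]
on $\S^2$. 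Integrating over $\S^2$, the first group reproduces exactly $-\tfrac{2}{p-1}\ell(\mb u)$, so the identity reduces to showing $\int_{\S^2}\big(\Delta u_1-\partial_r^2 u_1-2\partial_r u_1\big)(\omega)\,d\sigma(\omega)=0$.

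This is precisely the place where the three-dimensional setting enters. The polar decomposition $\Delta=\partial_r^2+\tfrac{2}{r}\partial_r+\tfrac{1}{r^2}\Delta_{\S^2}$, with $\Delta_{\S^2}$ the Laplace--Beltrami operator on $\S^2$ — the coefficient $2=d-1$ being specific to $d=3$ and matching exactly the surplus $-2\partial_r u_1$ produced above — shows that $\Delta u_1-\partial_r^2 u_1-2\partial_r u_1=\Delta_{\S^2}u_1$ on $\S^2$. Since $\S^2$ is a closed manifold, $\int_{\S^2}\Delta_{\S^2}u_1\,d\sigma=0$ by the divergence theorem, which proves the identity and hence the lemma. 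I do not anticipate a genuine obstacle here: the entire content is the bookkeeping that reassembles the second radial derivative and the excess first radial derivative into a pure angular Laplacian, together with the fact that $\ell$ turns out to be an honest eigenfunctional of $\tilde{\mb L}$ with eigenvalue $-\tfrac{2}{p-1}$; the regularity hypothesis $\mb u\in C^3(\overline{\B^3})\times C^2(\overline{\B^3})$ is used only to guarantee that all boundary traces occurring above are continuous so that the computation is legitimate.
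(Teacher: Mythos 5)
Your proof is correct and takes essentially the same route as the paper: both show that the boundary functional $\ell(\mb u)=\int_{\S^2}[\omega^j\partial_j u_1+u_1+u_2]\,d\sigma$ is an eigenfunctional of $\tilde{\mb L}$ with eigenvalue $-\tfrac{2}{p-1}$, the key point being that the leftover terms assemble into the Laplace--Beltrami operator on $\S^2$, whose integral over the closed sphere vanishes. The only stylistic differences are your rank-one framing of $(\cdot|\cdot)_3$ (which makes the equality, rather than just the inequality, manifest) and your use of the polar decomposition $\Delta=\partial_r^2+\tfrac{2}{r}\partial_r+\tfrac{1}{r^2}\Delta_{\S^2}$ in place of the paper's expression $(\delta^{jk}-\omega^j\omega^k)\partial_j\partial_k-2\omega^j\partial_j$, which are the same operator written in different notation.
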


\begin{proof}
We have
\begin{align*}
\int_{\S^2}&\left [\omega^j \partial_j [\tilde{\mb L}\mb u]_1 + [\tilde{\mb L}\mb u]_1+
[\tilde{\mb L}\mb u]_2 \right ](\omega)d\sigma(\omega) \\
&=-\tfrac{2}{p-1}\int_{\S^2}\left [ \omega^j \partial_j u_1+u_1+u_2 \right ](\omega)d\sigma(\omega) \\
&\quad + \int_{\S^2} \left [ (\delta^{jk}-\omega^j \omega^k)\partial_j \partial_k u_1-2\omega^j
\partial_j u_1 \right ](\omega)d\sigma(\omega) \\
&=-\tfrac{2}{p-1}\int_{\S^2}\left [ \omega^j \partial_j u_1+u_1+u_2 \right ](\omega)d\sigma(\omega)
\end{align*}
since $-[(\delta^{jk}-\omega^j \omega^k)\partial_j \partial_k
-2\omega^j \partial_j]$ is the Laplace-Beltrami
operator on $\S^2$.
\end{proof}

Now we set
\[ (\mb u | \mb v):=\sum_{j=1}^3 (\mb u | \mb v)_j \]
and claim that $\|\cdot\|:=\sqrt{(\cdot | \cdot)}$ is equivalent to
$\|\cdot\|_{H^2 \times H^1(\B^3)}$.
In order to prove this, it is useful to recall the following result.

\begin{lemma}
\label{lem:trace}
We have
\[ \|f\|_{H^1(\B^3)}\simeq \|\nabla f\|_{L^2(\B^3)}+\|f\|_{L^2(\S^2)} \]
for all $f\in C^1(\overline{\B^3})$.
\end{lemma}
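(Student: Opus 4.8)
The plan is to prove the two-sided estimate $\|f\|_{H^1(\B^3)} \simeq \|\nabla f\|_{L^2(\B^3)} + \|f\|_{L^2(\S^2)}$ by establishing each direction separately. The inequality $\|\nabla f\|_{L^2(\B^3)} + \|f\|_{L^2(\S^2)} \lesssim \|f\|_{H^1(\B^3)}$ is immediate: the gradient term is trivially dominated, and the boundary term is controlled by the standard trace theorem $\|f\|_{L^2(\S^2)} \lesssim \|f\|_{H^1(\B^3)}$, which holds since $\S^2 = \partial\B^3$ is a smooth compact hypersurface. So the content is entirely in the reverse inequality $\|f\|_{L^2(\B^3)} \lesssim \|\nabla f\|_{L^2(\B^3)} + \|f\|_{L^2(\S^2)}$, which, once combined with the trivial $\|\nabla f\|_{L^2(\B^3)}$ bound, yields $\|f\|_{H^1(\B^3)} \lesssim \|\nabla f\|_{L^2(\B^3)} + \|f\|_{L^2(\S^2)}$.

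For the reverse inequality I would argue by a compactness-contradiction (Poincaré-type) argument. Suppose it fails; then there is a sequence $(f_n) \subset C^1(\overline{\B^3})$ with $\|f_n\|_{L^2(\B^3)} = 1$ and $\|\nabla f_n\|_{L^2(\B^3)} + \|f_n\|_{L^2(\S^2)} \to 0$. Then $(f_n)$ is bounded in $H^1(\B^3)$, so by Rellich-Kondrachov a subsequence converges strongly in $L^2(\B^3)$ to some $f$ with $\|f\|_{L^2(\B^3)} = 1$, and weakly in $H^1(\B^3)$; since $\|\nabla f_n\|_{L^2} \to 0$ we get $\nabla f = 0$, hence $f$ is (a.e. equal to) a constant $c$ on the connected domain $\B^3$, and $c \neq 0$ because $\|f\|_{L^2} = 1$. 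On the other hand, the trace operator $H^1(\B^3) \to L^2(\S^2)$ is bounded, so $f_n|_{\S^2} \to f|_{\S^2} = c$ in $L^2(\S^2)$ (using that $f_n \to f$ strongly in $H^1$, since both $\nabla f_n \to 0 = \nabla f$ and $f_n \to f$ in $L^2$); but $\|f_n\|_{L^2(\S^2)} \to 0$ forces $c = 0$, a contradiction. This establishes the desired inequality for $C^1(\overline{\B^3})$ functions with an absolute constant.

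Alternatively — and perhaps cleaner to write — one can give an explicit proof via the divergence theorem: for $f \in C^1(\overline{\B^3})$, integrating $\partial_j(\xi^j |f(\xi)|^2)$ over $\B^3$ gives $\int_{\S^2}|f|^2\,d\sigma = 3\int_{\B^3}|f|^2 + 2\Re\int_{\B^3}\xi^j \partial_j f\,\overline{f}$ (this is precisely the identity $2\Re[\xi^j\partial_j f\,\overline f] = \partial_{\xi^j}[\xi^j|f|^2] - 3|f|^2$ already used in the proof of Lemma~\ref{lem:sesq}), whence $3\|f\|_{L^2(\B^3)}^2 \leq \|f\|_{L^2(\S^2)}^2 + 2\|f\|_{L^2(\B^3)}\|\nabla f\|_{L^2(\B^3)}$, and a Young's inequality $2\|f\|_{L^2}\|\nabla f\|_{L^2} \leq \|f\|_{L^2}^2 + \|\nabla f\|_{L^2}^2$ absorbs the cross term to give $2\|f\|_{L^2(\B^3)}^2 \leq \|f\|_{L^2(\S^2)}^2 + \|\nabla f\|_{L^2(\B^3)}^2$. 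The main (only) obstacle is really just deciding which route to present; the compactness argument is robust but non-quantitative, while the divergence-theorem computation is elementary and gives explicit constants, so I would favor the latter. Finally, density of $C^1(\overline{\B^3})$ in $H^1(\B^3)$ extends the equivalence to all of $H^1(\B^3)$ if needed, though the statement as phrased only asserts it for $C^1$ functions.
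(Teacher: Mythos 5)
Both of your arguments are correct and self-contained, but note that the paper does not actually prove this lemma: its ``proof'' is simply the citation ``See e.g.~\cite{DonZen14}, Lemma 3.1.'' So you cannot be matching or diverging from an in-text argument; what you have done is supply one (in fact two) that the authors chose to outsource. Your compactness--contradiction route is the standard Poincar\'e-style argument and is the one used in the cited reference; the one detail worth being slightly careful about is that you need the strong $H^1$ convergence $f_n\to f$ before invoking trace continuity, and you correctly obtain it by combining the Rellich $L^2$ limit with $\nabla f_n\to 0$. Your divergence-theorem route is the more economical one and I would agree it is preferable to present: the identity $\partial_j(\xi^j|f|^2)=3|f|^2+2\Re(\xi^j\partial_j f\,\overline f)$, integrated over $\B^3$, gives $3\|f\|_{L^2(\B^3)}^2=\|f\|_{L^2(\S^2)}^2-2\Re\int_{\B^3}\xi^j\partial_j f\,\overline f$, and after Cauchy--Schwarz (using $|\xi|\le 1$) and Young's inequality you land on $2\|f\|_{L^2(\B^3)}^2\le \|f\|_{L^2(\S^2)}^2+\|\nabla f\|_{L^2(\B^3)}^2$, which together with the trivial direction and the trace theorem closes the equivalence with explicit constants. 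This is exactly the same algebraic mechanism the paper uses in Lemma~\ref{lem:sesq}, so it also fits well stylistically. No gaps.
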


\begin{proof}
See e.g.~\cite{DonZen14}, Lemma 3.1.
\end{proof}

Furthermore, we need a Poincar\'e-type inequality.

\begin{lemma}
\label{lem:Poincare}
Let
\[ Sf:=\frac{1}{4\pi}\int_{\S^2}f(\omega)d\sigma(\omega). \]
Then we have
\[ \|f-Sf\|_{L^2(\B^3)}\lesssim \|\nabla f\|_{H^1(\B^3)} \]
for all $f\in C^2(\overline{\B^3})$.
\end{lemma}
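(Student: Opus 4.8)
The plan is to reduce the full three-dimensional Poincar\'e-type estimate
\[ \|f-Sf\|_{L^2(\B^3)}\lesssim \|\nabla f\|_{H^1(\B^3)} \]
to a one-dimensional problem in the radial variable by expanding $f$ in spherical
harmonics, and to treat the term with zero angular momentum separately from the rest,
since only that term requires the projection $Sf$. Write $f(\xi)=\sum_{\ell,m} f_{\ell,m}(r)Y_{\ell,m}(\omega)$, where $r=|\xi|\in[0,1]$ and $\omega=\xi/|\xi|$; then $\|f-Sf\|_{L^2(\B^3)}^2 = \|f_{0,0}-c\|_{L^2((0,1),r^2dr)}^2 + \sum_{(\ell,m)\neq(0,0)} \|f_{\ell,m}\|_{L^2((0,1),r^2dr)}^2$ for the appropriate constant $c$ (namely $Sf=\frac{1}{\sqrt{4\pi}}f_{0,0}(1)$ up to normalization of $Y_{0,0}$). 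Since $\|\nabla f\|_{H^1(\B^3)}^2\simeq \|\nabla f\|_{L^2(\B^3)}^2 + \sum_{i,j}\|\partial_i\partial_j f\|_{L^2(\B^3)}^2$, and these are likewise diagonalized by the spherical-harmonic decomposition, it suffices to obtain, for each fixed $(\ell,m)$, a bound on $\|f_{\ell,m}\|_{L^2((0,1),r^2dr)}$ (respectively $\|f_{0,0}-c\|$) in terms of the radial and angular derivative norms that appear on the right.

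For the nonzero modes $(\ell,m)\neq(0,0)$ the key point is that there is no constant to subtract: the angular part of $f$ has mean zero over $\S^2$, so I would exploit the spherical Poincar\'e inequality on $\S^2$ (equivalently, $\ell(\ell+1)\geq 2$) to control $\|f_{\ell,m}\|_{L^2((0,1),r^2dr)}$ by $\|\nabla_\omega f\|_{L^2(\B^3)}$, which is in turn bounded by $\|\nabla f\|_{L^2(\B^3)}$ up to a factor of $r^{-1}$; to absorb the $r^{-1}$ near the origin I would use that $f\in C^2(\overline{\B^3})$ forces the nonradial modes to vanish to appropriate order at $r=0$, and combine this with the one-dimensional Hardy inequality on $(0,1)$ applied to the measure $r^2\,dr$, paying for the loss with the extra derivative available from $\|\nabla f\|_{H^1}$. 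For the radial mode $f_{0,0}$ I would apply the ordinary one-dimensional Poincar\'e inequality on the interval $(0,1)$ with weight $r^2$, subtracting the boundary value $f_{0,0}(1)$ (which is exactly what $Sf$ encodes): $\|f_{0,0}-f_{0,0}(1)\|_{L^2((0,1),r^2dr)}\lesssim \|f_{0,0}'\|_{L^2((0,1),r^2dr)}\leq \|\nabla f\|_{L^2(\B^3)}$, with no need of the second derivative at all for this mode.

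I expect the main obstacle to be the behavior at the origin $r=0$ for the higher angular-momentum modes: naively, dividing the gradient by $r$ to extract the angular derivative loses integrability, and one must use both the smoothness hypothesis $f\in C^2(\overline{\B^3})$ (to guarantee $f_{\ell,m}(r)=O(r^\ell)$, hence at least $O(r)$ for $\ell\geq1$, plus matching control of one more derivative) and a Hardy inequality to trade the apparent singularity against the $H^1$-norm of $\nabla f$ rather than merely its $L^2$-norm. This is precisely why the right-hand side of the asserted inequality carries the stronger norm $\|\nabla f\|_{H^1(\B^3)}$ and not just $\|\nabla f\|_{L^2(\B^3)}$; a cleaner alternative, which I would pursue if the mode-by-mode bookkeeping becomes heavy, is to argue more directly: extend $f-Sf$ appropriately and note that $f-Sf$ has vanishing average on every sphere $r=1$ in the angular sense, then invoke a compactness/contradiction argument (a bounded sequence in $H^2(\B^3)$ with $\|\nabla f_n\|_{H^1}\to 0$ has a subsequence converging in $L^2$ to a function with $\nabla f\equiv 0$, i.e.\ a constant, which is killed by $f\mapsto f-Sf$), yielding the inequality without tracking constants.
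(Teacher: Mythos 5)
Your ``cleaner alternative'' at the end is essentially the paper's proof verbatim: normalize a supposed counterexample sequence, extract an $H^1$-convergent subsequence via $H^2\hookrightarrow H^1$ compactness, upgrade to $H^2$-convergence using $\|\nabla g_n\|_{H^1}\to 0$, and conclude the limit is a constant annihilated by $f\mapsto f-Sf$, contradicting $\|g\|_{L^2}=1$. So one branch of your proposal matches the paper exactly.

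Your primary route via spherical harmonics is a genuinely different and perfectly valid approach, but your diagnosis of the ``main obstacle'' near $r=0$ is mistaken, and this is worth straightening out. Writing the gradient in polar form, $|\nabla f(r\omega)|^2=|\partial_r f(r\omega)|^2+r^{-2}|\slashed\nabla_\omega f(r\omega)|^2$, the factor $r^{-1}$ on the angular part is \emph{already present} in $|\nabla f|$; you do not have to ``divide by $r$'' and then repair the singularity. Concretely, orthogonality gives
\[
\|\nabla f\|_{L^2(\B^3)}^2=\sum_{\ell,m}\Big(\int_0^1|f_{\ell,m}'(r)|^2r^2\,dr
+\ell(\ell+1)\int_0^1|f_{\ell,m}(r)|^2\,dr\Big),
\]
so for every $(\ell,m)\neq(0,0)$ you have, using only $r^2\le 1$ on $(0,1)$,
\[
\int_0^1|f_{\ell,m}(r)|^2r^2\,dr\le\int_0^1|f_{\ell,m}(r)|^2\,dr
\le\tfrac{1}{\ell(\ell+1)}\|\nabla(f_{\ell,m}Y_{\ell,m})\|_{L^2(\B^3)}^2.
\]
No Hardy inequality, no vanishing order $O(r^\ell)$ at the origin, and no second derivatives are required. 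The radial mode is handled by the one-dimensional computation you describe (Cauchy--Schwarz with weight $s^2$ gives $\int_0^1 r^2|g(r)-g(1)|^2dr\le\tfrac16\int_0^1|g'(s)|^2s^2ds$). Summing, your approach actually yields the \emph{stronger} estimate $\|f-Sf\|_{L^2(\B^3)}\lesssim\|\nabla f\|_{L^2(\B^3)}$. The paper states the weaker $H^1$-on-the-right form because that is what is convenient in the subsequent application (Lemma~\ref{lem:ip}), and its soft compactness proof delivers that form immediately; your mode-by-mode argument trades the nonconstructive compactness step for explicit constants and a sharper conclusion. Both are correct; just drop the Hardy-inequality machinery, which is addressing a problem that is not there.
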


\begin{remark}
We emphasize the differences of Lemma \ref{lem:Poincare} and the standard Poincar\'e inequality:
We take the mean over the sphere $\S^2$ instead of the ball $\B^3$ and on the right-hand side
we have the $H^1$-norm of the gradient instead of the $L^2$-norm. 
\end{remark}

\begin{proof}[Proof of Lemma \ref{lem:Poincare}]
The statement is obviously true if
$\|\nabla f\|_{H^1(\B^3)}=0$, because then we have $f(x)=c$ for some constant
$c\in \C$ which implies $f(x)-Sf=0$ for all $x\in \overline{\B^3}$.
Thus, we may exclude this trivial case.
As in the classical proof of the Poincar\'e inequality, we now argue by contradiction.
Suppose the statement were false. Then we could find a sequence 
$(f_n)\subset C^2(\overline{\B^3})$ with $\|\nabla f_n\|_{H^1(\B^3)}>0$
and such that 
\[ \|f_n-Sf_n\|_{L^2(\B^3)}\geq n \|\nabla f_n\|_{H^1(\B^3)} \]
for all $n\in \N$.
We set
\[ g_n:=\frac{f_n-Sf_n}{\|f_n-Sf_n\|_{L^2(\B^3)}}. \]
Then we have $\|g_n\|_{L^2(\B^3)}=1$, $Sg_n=0$, and $\|\nabla g_n\|_{H^1(\B^3)}\leq \frac{1}{n}$.
In particular, $\|g_n\|_{H^2(\B^3)}\lesssim 1$. By the compactness of the embedding
$H^2(\B^3)\hookrightarrow H^1(\B^3)$ (and after passing to a subsequence, if necessary), we
find a function $g \in H^1(\B^3)$ with $g_n\to g$ in $H^1(\B^3)$ and $\|g\|_{L^2(\B^3)}=1$.
It follows that
\begin{align*} \|g_n-g_m\|_{H^2(\B^3)}&\lesssim \|g_n-g_m\|_{L^2(\B^3)}+
\|\nabla g_n-\nabla g_m\|_{H^1(\B^3)} \\
&\lesssim \|g_n-g_m\|_{L^2(\B^3)}+\tfrac{1}{n}+\tfrac{1}{m}
\end{align*}
and thus, $g_n \to g$ even in $H^2(\B^3)$.
By Sobolev embedding we infer $g\in C(\overline{\B^3})$
and from this it follows that $Sg=0$.
Furthermore, we have $\|\nabla g\|_{L^2(\B^3)}=0$ and thus, $g(x)=c$ for almost every
$x\in \B^3$ and some $c\in \C$.
Since $g\in C(\overline{\B^3})$, we obtain $g(x)=c$ for all $x\in \overline{\B^3}$
and from $Sg=0$ we infer $c=0$.
This, however, contradicts $\|g\|_{L^2(\B^3)}=1$.
\end{proof}

\begin{lemma}
\label{lem:ip}
The sesquilinear form $(\cdot|\cdot)$ defines an inner product on $\tilde{\mc H}$.
The completion of $\tilde{\mc H}$ is a Hilbert space which is equivalent to
$H^2\times H^1(\B^3)$.
\end{lemma}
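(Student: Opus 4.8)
The plan is to prove the norm equivalence $\|\cdot\|\simeq\|\cdot\|_{H^2\times H^1(\B^3)}$ on $\tilde{\mc H}$ in two directions, after which the Hilbert space structure and the identification of the completion follow by soft arguments. For the upper bound $\|\mb u\|\lesssim\|\mb u\|_{H^2\times H^1}$, I would simply estimate each of the three pieces $(\mb u|\mb u)_j$: the interior integrals in $(\cdot|\cdot)_1$ and $(\cdot|\cdot)_2$ are manifestly controlled by $\|u_1\|_{H^2}^2+\|u_2\|_{H^1}^2$ (using $|\Delta u_1|\lesssim|\nabla^2 u_1|$ pointwise), and the boundary integrals over $\S^2$ — of $|\nabla u_1|^2$, $|\nabla u_2|^2$, $|u_2|^2$, and the product appearing in $(\cdot|\cdot)_3$ — are bounded by the trace theorem $H^1(\B^3)\hookrightarrow L^2(\S^2)$ applied to $u_1$, $\nabla u_1\in H^1$ (hence also the tangential derivatives on the sphere), $u_2\in H^1$, and $\nabla u_2$; here Lemma \ref{lem:trace} is the clean tool, or just the standard trace inequality. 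The form $(\cdot|\cdot)_3$ is a rank-one term and its square root is dominated by $\int_{\S^2}|\omega^j\partial_j u_1+u_1+u_2|\,d\sigma\lesssim\|\omega^j\partial_j u_1+u_1+u_2\|_{L^2(\S^2)}$, again a trace.

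The lower bound $\|\mb u\|_{H^2\times H^1}\lesssim\|\mb u\|$ is the substantive direction and the main obstacle. The difficulty is that $(\mb u|\mb u)$ controls $\|\Delta u_1\|_{L^2(\B^3)}$, $\|\nabla^2 u_1\|_{L^2(\B^3)}$, $\|\nabla u_2\|_{L^2(\B^3)}$ and a handful of boundary quantities, but a priori \emph{not} $\|u_1\|_{L^2(\B^3)}$ or $\|u_2\|_{L^2(\B^3)}$, which is exactly why the sphere-mean term $(\cdot|\cdot)_3$ was introduced. For $u_2$ this is straightforward: $(\mb u|\mb u)_2$ already contains $\|\nabla u_2\|_{L^2(\B^3)}^2+\|u_2\|_{L^2(\S^2)}^2$, so Lemma \ref{lem:trace} gives $\|u_2\|_{H^1(\B^3)}\lesssim\|\mb u\|$ directly. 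For $u_1$ I would argue in stages: first, from $\|\Delta u_1\|_{L^2(\B^3)}$ together with $\|\nabla u_1\|_{L^2(\S^2)}$ (the boundary term in $(\cdot|\cdot)_1$) one recovers $\|\nabla u_1\|_{L^2(\B^3)}$ — e.g.\ by the identity $\int_{\B^3}|\nabla^2 u_1|^2=\int_{\B^3}|\Delta u_1|^2+\text{(boundary terms in }\nabla u_1,\nabla^2 u_1)$ which are already present in $(\cdot|\cdot)_1$, or more directly since $(\cdot|\cdot)_1$ literally contains $\|\nabla^2 u_1\|_{L^2(\B^3)}^2$; then applying Lemma \ref{lem:trace} to $u_1$ reduces the problem to controlling $\|u_1\|_{L^2(\S^2)}$.

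To close the gap I would invoke the Poincaré-type Lemma \ref{lem:Poincare}: it gives $\|u_1-Su_1\|_{L^2(\B^3)}\lesssim\|\nabla u_1\|_{H^1(\B^3)}\lesssim\|\mb u\|$ (the right side now being under control), so it only remains to bound the single constant $|Su_1|=\frac{1}{4\pi}|\int_{\S^2}u_1\,d\sigma|$. This is where $(\cdot|\cdot)_3$ enters: $(\mb u|\mb u)_3^{1/2}=|\int_{\S^2}(\omega^j\partial_j u_1+u_1+u_2)\,d\sigma|\geq 4\pi|Su_1|-|\int_{\S^2}\omega^j\partial_j u_1\,d\sigma|-|\int_{\S^2}u_2\,d\sigma|$, and the two subtracted terms are already controlled ($\|\nabla u_1\|_{L^2(\S^2)}$ sits in $(\cdot|\cdot)_1$ and $\|u_2\|_{L^2(\S^2)}$ in $(\cdot|\cdot)_2$), hence $|Su_1|\lesssim\|\mb u\|$. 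Combining, $\|u_1\|_{L^2(\B^3)}\leq\|u_1-Su_1\|_{L^2}+|Su_1|\,|\B^3|^{1/2}\lesssim\|\mb u\|$, and together with $\|\nabla^2 u_1\|_{L^2}\lesssim\|\mb u\|$ and interpolation (or just $\|u_1\|_{H^2}\lesssim\|u_1\|_{L^2}+\|\Delta u_1\|_{L^2}$ by elliptic regularity on the ball, which also needs the already-controlled boundary data, or simply the presence of $\|\nabla^2 u_1\|_{L^2}$ in the form) we get $\|u_1\|_{H^2(\B^3)}\lesssim\|\mb u\|$. Once the two-sided equivalence is established, positive-definiteness of $(\cdot|\cdot)$ on $\tilde{\mc H}$ is immediate (if $\|\mb u\|=0$ then $\|\mb u\|_{H^2\times H^1}=0$, so $\mb u=0$), so $(\cdot|\cdot)$ is an inner product; and since $\tilde{\mc H}=C^2\times C^1(\overline{\B^3})$ is dense in $H^2\times H^1(\B^3)$ and the two norms are equivalent on it, the completion of $(\tilde{\mc H},\|\cdot\|)$ is a Hilbert space that is topologically (and as a set) equal to $H^2\times H^1(\B^3)$ with an equivalent norm. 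I expect the bookkeeping of which boundary quantity is controlled by which $(\cdot|\cdot)_j$, and the careful chaining $\nabla^2 u_1\rightsquigarrow\nabla u_1\rightsquigarrow u_1-Su_1\rightsquigarrow Su_1$, to be the only place demanding genuine care.
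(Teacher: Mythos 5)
Your proposal is correct and follows essentially the same route as the paper: the upper bound by the trace inequality, then the lower bound by applying Lemma \ref{lem:trace} to $u_2$ and to $\nabla u_1$, Lemma \ref{lem:Poincare} to control $\|u_1-Su_1\|_{L^2(\B^3)}$, and finally the third form $(\cdot|\cdot)_3$ plus the triangle inequality (with $\|\nabla u_1\|_{L^2(\S^2)}$ from $(\cdot|\cdot)_1$ and $\|u_2\|_{L^2(\S^2)}$ from $(\cdot|\cdot)_2$) to absorb the lone constant $Su_1$. The only cosmetic remark is that the aside about recovering $\|\nabla u_1\|_{L^2(\B^3)}$ from $\|\Delta u_1\|$ and about elliptic regularity for $\|u_1\|_{H^2}$ is unnecessary, since $(\cdot|\cdot)_1$ already contains the full Hessian $\|\nabla^2 u_1\|_{L^2(\B^3)}^2$, exactly as you then note.
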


\begin{proof}
We have to show that $\|\mb u\|\simeq \|\mb u\|_{H^2\times H^1(\B^3)}$ for all 
$\mb u\in \tilde{\mc H}$.
The estimate $\|\mb u\|\lesssim \|\mb u\|_{H^2 \times H^1(\B^3)}$ is a simple consequence
of the trace theorem.
Thus, it suffices to prove $\|\mb u\|_{H^2\times H^1(\B^3)}\lesssim \|\mb u\|$.
From Lemma \ref{lem:trace} we obtain
$\|u_2\|_{H^1(\B^3)}\lesssim \|\mb u\|$ and
$\|\nabla u_1\|_{H^1(\B^3)}\lesssim \|\mb u\|$.
Furthermore, Lemma \ref{lem:Poincare} yields
\begin{align*} 
\|u_1\|_{L^2(\B^3)}&\leq \|u_1-Su_1\|_{L^2(\B^3)}+\|Su_1\|_{L^2(\B^3)} \\
&\lesssim \|\nabla u_1\|_{H^1(\B^3)}+\left |\int_{\S^2}u_1(\omega)d\sigma(\omega) \right | \\
&\lesssim \|\mb u\|+\left |
\int_{\S^2}[\omega^j\partial_j u_1+u_1+u_2](\omega)d\sigma(\omega) \right | \\
&\quad +\|\nabla u_1\|_{L^2(\S^2)}+\|u_2\|_{L^2(\S^2)} \\
&\lesssim \|\mb u\|
\end{align*}
which finishes the proof.
\end{proof}

\subsection{Generation of the semigroup}
In order to obtain a proper functional analytic setup, we augment the formal
operator $\tilde{\mb L}$ with the domain
$\mc D(\tilde{\mb L})=C^3\times C^2(\overline{\B^3})$.
With this domain, $\tilde{\mb L}: \mc D(\tilde{\mb L})\subset \mc H \to \mc H$, where
$\mc H:=H^2 \times H^1(\B^3)$, is a well-defined (unbounded) operator
on $\mc H$.
Our goal is to show that $\tilde{\mb L}$ generates a semigroup (which then 
governs the free wave evolution in a backward lightcone).

\begin{proposition}
\label{prop:free}
The operator $\tilde{\mb L}: \mc D(\tilde{\mb L})\subset \mc H\to\mc H$ is densely defined,
closable, and its closure $\mb L$ generates a strongly-continuous one-parameter semigroup
$\mb S: [0,\infty)\to \mc B(\mc H)$ which satisfies
\[ \|\mb S(\tau)\|\leq e^{-\frac{2}{p-1}\tau} \]
for all $\tau\geq 0$.
\end{proposition}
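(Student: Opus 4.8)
The plan is to apply the Lumer--Phillips theorem (in its dissipativity form) to the rescaled operator $\tilde{\mb L}+\tfrac{2}{p-1}$ on the Hilbert space $\mc H=H^2\times H^1(\B^3)$ equipped with the inner product $(\cdot|\cdot)=\sum_{j=1}^3(\cdot|\cdot)_j$ constructed above. By Lemma~\ref{lem:ip} this inner product induces a norm equivalent to the standard one, so it suffices to work with $(\cdot|\cdot)$ throughout. Density of the domain $\mc D(\tilde{\mb L})=C^3\times C^2(\overline{\B^3})$ in $\mc H$ is immediate since $C^\infty(\overline{\B^3})$ is dense in both $H^2(\B^3)$ and $H^1(\B^3)$. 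Combining Lemma~\ref{lem:sesq} (for $j=1,2$) with Lemma~\ref{lem:sesq3} (for $j=3$) and summing, we obtain
\[
\Re(\tilde{\mb L}\mb u\,|\,\mb u)\leq -\tfrac{2}{p-1}(\mb u\,|\,\mb u)
\qquad\text{for all }\mb u\in\mc D(\tilde{\mb L}),
\]
so $\tilde{\mb L}+\tfrac{2}{p-1}$ is dissipative. Since a densely defined dissipative operator is automatically closable and its closure is again dissipative, $\mb L:=\overline{\tilde{\mb L}}$ exists and $\mb L+\tfrac{2}{p-1}$ is dissipative.

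It then remains to verify the range condition: it suffices to show that $\rg(\lambda-\tilde{\mb L})$ is dense in $\mc H$ for some (hence, by dissipativity, all) $\lambda>-\tfrac{2}{p-1}$; density of the range of the closable operator implies the range of $\lambda-\mb L$ equals all of $\mc H$, and then Lumer--Phillips gives that $\mb L+\tfrac{2}{p-1}$ generates a contraction semigroup $\{e^{\frac{2}{p-1}\tau}\mb S(\tau)\}$, i.e.\ $\|\mb S(\tau)\|\leq e^{-\frac{2}{p-1}\tau}$. Concretely, fix $\lambda>-\tfrac{2}{p-1}$ (say $\lambda$ a large positive real) and, given $\mb f=(f_1,f_2)$ in a dense subset of $\mc H$ — smooth functions will do — solve $(\lambda-\tilde{\mb L})\mb u=\mb f$. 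Writing this out, the second component gives $u_2=\lambda u_1+\xi^j\partial_j u_1+\tfrac{2}{p-1}u_1-f_1$; substituting into the first component eliminates $u_2$ and leaves a single second-order equation for $u_1$ of the schematic form
\[
(\delta^{jk}-\xi^j\xi^k)\partial_j\partial_k u_1
-b^j(\xi)\partial_j u_1-c\,u_1=F
\]
with $F$ built from $f_1,f_2$ and their first derivatives, and with $c>0$ for $\lambda$ large. This is a second-order elliptic operator inside $\B^3$ which \emph{degenerates} on $\partial\B^3=\S^2$ because the principal symbol $\delta^{jk}-\xi^j\xi^k$ annihilates the normal direction at $|\xi|=1$ — this degeneracy is the whole point and standard elliptic theory does not apply.

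I expect this elliptic solvability step to be the main obstacle. The plan is to resolve it by decomposing in spherical harmonics: expand $u_1(\xi)=\sum_{\ell,m}u_{\ell,m}(|\xi|)\,Y_{\ell,m}(\xi/|\xi|)$ and similarly for $F$, which reduces the PDE to a family (indexed by $\ell$) of second-order linear ODEs for $u_{\ell,m}(\rho)$ on $\rho\in(0,1)$ with a regular singular point at $\rho=0$ (the origin) and another at $\rho=1$ (the lightcone). One then shows, for each $\ell$, existence of a solution regular at both endpoints — at $\rho=0$ by choosing the Frobenius solution with the non-negative indicial exponent, at $\rho=1$ using the specific structure of the indicial equation there (this is where $c>0$, i.e.\ $\lambda$ sufficiently large, is used, and where the value $\lambda>-\tfrac{2}{p-1}$ matters) — together with uniform-in-$\ell$ Sobolev bounds $\|u_{\ell,m}\|\lesssim\ell^{-2}\|F_{\ell,m}\|$ or similar, so that reassembling the series produces $\mb u\in\mc H$. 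It is convenient to take $\mb f$ in the dense subspace of finite spherical-harmonic expansions of smooth functions, so only finitely many ODEs need to be solved for each $\mb f$ and convergence issues are trivial; density of the resulting range then follows. Finally, strong continuity of $\mb S$ and the bound $\|\mb S(\tau)\|\le e^{-\frac{2}{p-1}\tau}$ are exactly the conclusions of Lumer--Phillips applied to $\mb L+\tfrac{2}{p-1}$.
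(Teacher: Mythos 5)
Your proposal follows the same strategy as the paper: dissipativity of $\tilde{\mb L}+\tfrac{2}{p-1}$ from Lemmas~\ref{lem:sesq} and~\ref{lem:sesq3}, then the range condition via a spherical--harmonic reduction of the degenerate elliptic problem, then Lumer--Phillips. You correctly identify the degenerate elliptic solvability as the crux and correctly note that truncating to a finite spherical-harmonic sum for the right-hand side sidesteps summability issues.

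One point to sharpen: the domain is $\mc D(\tilde{\mb L})=C^3\times C^2(\overline{\B^3})$, so after reassembling the mode sum you need $u_1\in C^3(\overline{\B^3})$ --- not merely $u_1\in H^2(\B^3)$ as your last sentence before ``density of the resulting range then follows'' suggests --- and establishing $C^3$ regularity up to the characteristic boundary $\rho=1$ is where the real work sits. The paper does this by fixing the specific value $\lambda=\tfrac32-\tfrac{2}{p-1}$ (so the radial ODE has the clean coefficients $5\rho\partial_\rho$ and $\tfrac{15}{4}$, and each mode is an explicit hypergeometric equation), writing the solution by variation of constants, and then carefully showing via the explicit Wronskian and de l'H\^opital-type cancellations that the apparent $(1-\rho)^{-1}$ singularity in $u_{\ell,m}''$ cancels, leaving $u_{\ell,m}\in C^3(0,1]$. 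Your ``choose the right Frobenius branch at each endpoint'' sketch misses that variation of constants inevitably brings in the singular branch of $\psi_0$ near $\rho=1$ multiplied by a vanishing integral, so a cancellation analysis (not just a selection of exponents) is required. Your choice of ``$\lambda$ large'' would also yield a hypergeometric equation and an analogous analysis could be run, so the route is not wrong, but this boundary-regularity step needs to be carried out rather than asserted.
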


The proof consists of an application of the Lumer-Phillips Theorem.
In order to verify the hypothesis, we have to show that the range of
$\lambda-\tilde{\mb L}$ is dense in $\mc H$, for some $\lambda>-\frac{2}{p-1}$
(it turns out that $\lambda=\frac32-\frac{2}{p-1}$ is convenient).
This requires solving a degenerate elliptic problem
and we do this by an angular momentum decomposition.
For the convenience of the reader and to fix notation we have compiled
the necessary background material 
in Appendix \ref{sec:apx}.
Based on this, we prove the following technical lemma which is the
key ingredient for the aforementioned density result.

\begin{lemma}
\label{lem:degen}
Let $f\in H^1(\B^3)$ and $\varepsilon>0$. 
Then there exists a function $u\in C^3(\overline{\B^3})$ such that 
$g\in C^1(\overline{\B^3})$, defined by
\[ g(\xi):=-(\delta^{jk}-\xi^j \xi^k)\partial_j \partial_k u(\xi)
+5\xi^j \partial_j u(\xi)+\tfrac{15}{4}u(\xi), \]
satisfies $\|f-g\|_{H^1(\B^3)}<\varepsilon$.
\end{lemma}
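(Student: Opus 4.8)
The strategy is to solve the degenerate elliptic equation $g = -(\delta^{jk}-\xi^j\xi^k)\partial_j\partial_k u + 5\xi^j\partial_j u + \tfrac{15}{4}u$ for $u$, given a dense subclass of right-hand sides $f$, and then invoke density. First I would reduce to smooth, compactly supported data: since $C_c^\infty(\B^3)$ is dense in $H^1(\B^3)$, it suffices to find, for each $f \in C_c^\infty(\B^3)$, a solution $u \in C^3(\overline{\B^3})$ of the equation $g = f$ exactly (not just approximately), which would give the claim with $\varepsilon$ to spare; alternatively, one keeps a small error and only needs approximate solvability. Either way the real work is the ODE analysis after separating variables.

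**Angular momentum decomposition.** Expand $f$ and the sought $u$ in spherical harmonics, $u(\xi) = \sum_{\ell,m} u_{\ell,m}(|\xi|)\, Y_{\ell,m}(\xi/|\xi|)$, and similarly for $f$. Writing $\rho = |\xi|$, the operator $-(\delta^{jk}-\xi^j\xi^k)\partial_j\partial_k + 5\xi^j\partial_j + \tfrac{15}{4}$ acts on each angular mode as a second-order ODE in $\rho$ on $(0,1)$; using the standard fact (Appendix) that the radial part of $-\Delta$ on the mode $\ell$ is $-\partial_\rho^2 - \tfrac{2}{\rho}\partial_\rho + \tfrac{\ell(\ell+1)}{\rho^2}$ and that the characteristic term $-\xi^j\xi^k\partial_j\partial_k$ contributes $-\rho^2\partial_\rho^2$, one obtains an ODE of the form
\[
-(1-\rho^2)\,v'' - \Big(\tfrac{2}{\rho} + c\rho\Big) v' + \Big(\tfrac{\ell(\ell+1)}{\rho^2} + d\Big) v = f_{\ell,m}(\rho)
\]
with explicit constants $c,d$ coming from the $5\xi^j\partial_j$ and $\tfrac{15}{4}$ terms. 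This is a hypergeometric-type equation with regular singular points at $\rho = 0$ (the origin, where the indicial roots are $\ell$ and $-\ell-1$) and at $\rho = 1$ (the lightcone boundary, which is the degenerate/characteristic point). The point is that $\lambda = \tfrac32 - \tfrac{2}{p-1}$ was chosen precisely so that the homogeneous equation has nice, explicitly known solutions and so that the boundary behavior at $\rho=1$ is benign.

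**Solving each ODE mode.** For each $(\ell,m)$ I would construct the solution by the variation-of-parameters formula built from the two Frobenius solutions: pick $v_0$, the solution regular at $\rho=0$ (behaving like $\rho^\ell$), and $v_1$, a solution with the correct integrable behavior at $\rho=1$; the Wronskian is explicit. One must check three things: (i) that $v_0$ and $v_1$ are genuinely linearly independent for this value of $\lambda$, so the Green's function exists — this uses the explicit hypergeometric form and the specific choice of $\lambda$, and is where a generic $\lambda$ could fail; (ii) that the resulting $u_{\ell,m}$ extends to a $C^3$ function up to $\rho=1$, i.e. the characteristic degeneracy at the boundary does not destroy regularity — here the smoothness and compact support of $f$ away from $\rho=1$ helps, and one reads off the boundary asymptotics from the indicial equation at $\rho=1$; (iii) that the full sum $u = \sum u_{\ell,m} Y_{\ell,m}$ converges in $C^3(\overline{\B^3})$ — this follows from decay of the coefficients $f_{\ell,m}$ (rapid, since $f$ is smooth) together with uniform-in-$\ell$ bounds on the Green's operator, for which the $\tfrac{\ell(\ell+1)}{\rho^2}$ term gives ellipticity/coercivity that improves with $\ell$.

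**Main obstacle.** The delicate point, as flagged in the paper's outline, is the degeneracy at $\rho = 1$: standard elliptic theory does not apply because $[0,\infty)\times\S^2$ is characteristic. Concretely, the hard part is step (ii) above — showing the constructed solution is $C^3$ up to the boundary rather than merely developing a mild singularity or a logarithm there — and doing so with constants uniform in $\ell$ so the spherical-harmonic series can be resummed. I expect this is handled by exploiting the precise indicial roots at $\rho = 1$ for the chosen $\lambda = \tfrac32 - \tfrac{2}{p-1}$ (one root being, say, $0$ and the other nonnegative so that both local solutions are smooth, or one picks the smooth branch), combined with the fact that we may perturb $f$ by an $H^1$-small amount and hence need only solve for $f$ supported in a compact subset of $\B^3$, keeping the right-hand side zero near the bad boundary. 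Everything else — the separation of variables, variation of parameters, and the convergence estimates — is routine once this boundary regularity is in hand.
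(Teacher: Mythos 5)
Your overall strategy — reduce to smooth data by density, separate variables, solve each mode by variation of parameters built on the explicit hypergeometric fundamental system, then check boundary regularity — is the same as the paper's, and your observation that restricting to $f\in C_c^\infty(\B^3)$ simplifies the boundary issue at $\rho=1$ is correct: if $g_{\ell,m}$ vanishes near $\rho=1$, the variation-of-constants solution reduces on a neighborhood of $\rho=1$ to a constant multiple of the branch $\psi_1$, which is analytic there. But there is a genuine gap where you resum the spherical-harmonics series. You need $u=\sum_{\ell,m}u_{\ell,m}Y_{\ell,m}$ to converge in $C^3(\overline{\B^3})$, and for that you invoke ``uniform-in-$\ell$ bounds on the Green's operator'' based on a coercivity heuristic. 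That is not a proof and is not obviously achievable with the asserted ease: the Wronskian of the chosen fundamental system scales like $2^{\ell}$, the hypergeometric parameters and hence $\psi_0,\psi_1$ depend on $\ell$, and coercivity from the centrifugal term $\ell(\ell+1)/\rho^2$ would naturally yield $H^1$- or $L^2$-type control, not uniform $C^3$ estimates up to a characteristic boundary. You would have to do substantial quantitative work here, and the proposal only asserts the conclusion.

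The paper sidesteps this entirely with a device you are missing: after replacing $f$ by a smooth $\tilde g$ close in $H^1$, it further replaces $\tilde g$ by a \emph{finite} truncation $g_N$ of its spherical-harmonics expansion, again $H^1$-close (this is Lemma~\ref{lem:expfull}). One then needs to solve only finitely many decoupled ODEs and check $C^3$-regularity of each individually; no $\ell$-uniformity is required. With this in place the only remaining issue is boundary regularity, which the paper handles for general $\tilde g\in C^\infty(\overline{\B^3})$ by an explicit cancellation: the apparent $(1-\rho)^{-1}$ singularity in $u_{\ell,m}''$ coming from $\psi_0''(\rho)I_1(\rho)$ cancels against $-g_{\ell,m}(\rho)/(1-\rho^2)$ by l'H\^opital. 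Your compact-support reduction would let you avoid that cancellation and is compatible with the truncation trick, so it is a valid simplification of that one step, but it does not repair the infinite-series gap: you should close it by truncating as the paper does (or genuinely prove the $\ell$-uniform Green's-operator bounds). One smaller inaccuracy: the indicial roots at $\rho=1$ are not both nonnegative. For this $\lambda$ they are $0$ and $c-a-b=-\tfrac12$, so one Frobenius solution is genuinely singular $\sim(1-\rho)^{-1/2}$ (or, in the degenerate case $c-a-b=0$, logarithmic); the construction must explicitly select the regular branch $\psi_1$, which is what the paper does.
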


\begin{proof}
By the density of $C^\infty(\overline{\B^3})$ in $H^1(\B^3)$ we find a function
$\tilde g \in C^\infty(\overline{\B^3})$ satisfying 
$\|\tilde g-f\|_{H^1(\B^3)}<\frac{\varepsilon}{2}$.
For $r \in (0,1]$ 
we set $g_{\ell,m}(r):=(\tilde g(r\,\cdot)|Y_{\ell,m})_{\S^2}$
where 
$Y_{\ell,m} \in C^\infty(\S^2)$ for $ \ell\in \N_0$ and $m \in \{-\ell,-\ell+1,\dots,\ell-1,\ell\}$
denote the standard (orthonormalized) spherical harmonics on $\S^2$.
Then we define $g_N \in C^\infty(\overline{\B^3}\backslash \{0\})$ by
\[ g_N(\xi):=\sum_{\ell=0}^N \sum_{m=-\ell}^\ell g_{\ell,m}(|\xi|)
Y_{\ell,m}(\tfrac{\xi}{|\xi|}). \]
Lemma \ref{lem:expfull} shows that $g_N$ extends to a function
in $C^\infty(\overline{\B^3})$ 
and that we may choose $N\in \N$ so large
that $\|g_N-\tilde g\|_{H^1(\B^3)}<\frac{\varepsilon}{2}$.
The lemma is proved if we can find a function 
$u\in C^3(\overline{\B^3})$ satisfying
\begin{equation}
\label{eq:ellipticred} -(\delta^{jk}-\xi^j \xi^k)\partial_j \partial_ku(\xi)
+5\xi^j\partial_j u(\xi)+\tfrac{15}{4}u(\xi)=g_N(\xi) 
\end{equation}
since $\|g_N-f\|_{H^1(\B^3)}\leq \|g_N-\tilde g\|_{H^1(\B^3)}
+\|\tilde g-f\|_{H^1(\B^3)}<\varepsilon$.
With polar coordinates $\xi=\rho\omega$, 
$(\rho,\omega)\in (0,1]\times \S^2$, we obtain
\begin{align*}
-&(\delta^{jk}-\xi^j \xi^k)\partial_j \partial_k u(\rho\omega)
+5\xi^j\partial_j u(\rho\omega) +\tfrac{15}{4}u(\rho\omega) \\
&=\left [-(1-\rho^2)\partial_\rho^2-\tfrac{2}{\rho}\partial_\rho
  +5\rho\partial_\rho-\tfrac{1}{\rho^2}\slashed\partial_{\omega_j}
  \slashed\partial_{\omega^j}+\tfrac{15}{4} \right ]u(\rho\omega)
 \end{align*}
 where $-\slashed\partial^j\slashed\partial_j$ is the Laplace-Beltrami operator
 on $\S^2$, see Appendix \ref{sec:apx}.
Consequently, the ansatz
\begin{equation}
\label{eq:ansatz}
 u(\rho\omega)=\sum_{\ell=0}^N \sum_{m=-\ell}^\ell u_{\ell,m}(\rho)
Y_{\ell,m}(\omega) 
\end{equation}
yields the decoupled system
\begin{equation}
\label{eq:ode} 
\left [-(1-\rho^2)\partial_\rho^2-\tfrac{2}{\rho}\partial_\rho
+5\rho\partial_\rho+\tfrac{\ell(\ell+1)}{\rho^2}+\tfrac{15}{4}\right ]
u_{\ell,m}(\rho)=g_{\ell,m}(\rho)
\end{equation}
of ODEs for the functions $u_{\ell,m}$.
We first consider the homogeneous version of this equation, i.e., we
set $g_{\ell,m}$ equal to zero.
Suppressing the subscripts, we define a new dependent variable $v$ by 
setting $u_{\ell,m}(\rho)=\rho^\ell v(\rho^2)$.
Then Eq.~\eqref{eq:ode} (with $g_{\ell,m}=0$) is equivalent to
the hypergeometric differential equation
\[ z(1-z)v''(z)+[c-(a+b+1)z]v'(z)-ab v(z)=0 \]
where $z=\rho^2$ and 
$a=\tfrac{3+2\ell}{4}$, $b=\tfrac{5+2\ell}{4}$, $c=\tfrac{3+2\ell}{2}$.
For the following facts on hypergeometric functions we refer to
\cite{DLMF}.
With ${}_2F_1$ denoting the standard hypergeometric function, 
we have the two solutions
\begin{align*} 
\phi_0(z)&={}_2F_1(a,b,c;z)={}_2F_1(\tfrac{3+2\ell}{4}, \tfrac{5+2\ell}{4},
\tfrac{3+2\ell}{2}; z) \\
\phi_1(z)&={}_2F_1(a,b,a+b+1-c; 1-z)
={}_2F_1(\tfrac{3+2\ell}{4}, \tfrac{5+2\ell}{4}, \tfrac32; 1-z) 
\end{align*}
which are analytic around $z=0$ and $z=1$, respectively.
Furthermore, a third solution is given by
\begin{align*} \tilde \phi_1(z)&=(1-z)^{c-a-b}{}_2F_1(c-a, c-b, c-a-b+1,1-z) \\
&=(1-z)^{-\frac12}{}_2F_1(\tfrac{3+2\ell}{4},\tfrac{1+2\ell}{4},\tfrac12; 1-z) 
\end{align*}
which is singular at $z=1$.
We set $\psi_j(\rho):=\rho^\ell \phi_j(\rho^2)$, $j\in \{0,1\}$, and
$\tilde \psi_1(\rho):=\rho^\ell \tilde \phi_1(\rho^2)$.
By construction, $\psi_j$ and $\tilde \psi_1$ are solutions to 
Eq.~\eqref{eq:ode} with $g_{\ell,m}=0$.
We have the Wronskian relation
\[ W(\rho):=
W(\psi_0,\psi_1)(\rho)=-\frac{2^{\frac12+\ell}}{\rho^2(1-\rho^2)^\frac32}. \]
This formula can be derived in a straightforward manner from the 
explicitly known connection coefficients for hypergeometric functions.
The detailed computation is given in \cite{DonZen14} on p.~477.
By applying the variation of constants formula, we obtain a solution
to Eq.~\eqref{eq:ode} given by
\begin{align}
\label{eq:vkf}
u_{\ell,m}(\rho)=&-\psi_1(\rho)\int_0^\rho \frac{\psi_0(s)}{(1-s^2)W(s)}
g_{\ell,m}(s)ds \nonumber \\
&-\psi_0(\rho)\int_\rho^1 \frac{\psi_1(s)}{(1-s^2)W(s)}
g_{\ell,m}(s)ds.
\end{align}
Eq.~\eqref{eq:vkf} has been analyzed in \cite{DonZen14} and we take
some results from there.
Since $g_{\ell,m}\in C^\infty[0,1]$, it follows that $u_{\ell,m}\in 
C^\infty(0,1)$ but the precise regularity properties at the endpoints
are more difficult to establish.
In fact, the endpoint $\rho=1$ is the nontrivial one whereas the singularity
at the center is just an artifact of the polar coordinates.
For $\rho \in (0,1)$ we may differentiate Eq.~\eqref{eq:vkf} which yields
\[ u_{\ell,m}''(\rho)=-\psi_1''(\rho)I_0(\rho)-\psi_0''(\rho)I_1(\rho)
-\frac{g_{\ell,m}(\rho)}{1-\rho^2} \]
where $I_0$ and $I_1$ stand for the first and second integral expression
in Eq.~\eqref{eq:vkf}, respectively.
There is no reason to believe that $g_{\ell,m}(1)=0$ and therefore, 
$u_{\ell,m}''(\rho)$ has an apparent singularity at $\rho=1$.
From $|\tilde \psi_1(\rho)|\simeq (1-\rho)^{-\frac12}$ as $\rho\to 1-$
it follows that $|\psi_0(\rho)|\lesssim (1-\rho)^{-\frac12}$ for
$\rho$ near $1$.
As a consequence, we have $\psi_1'' I_0 \in C^1(0,1]$. 
On the other hand, we have $|\psi_0''(\rho)I_1(\rho)|\simeq (1-\rho)^{-1}$.
In fact, even more is true. By the structure of the integrand and the explicit
form of the solutions $\psi_j$ it is clear
that we have $\psi_0''(\rho)I_1(\rho)=\frac{h(\rho)}{1-\rho^2}$ for
a function $h \in C^2(0,1]$.
Furthermore, a straightforward computation using de l'H\^opital's rule shows that
\[ \lim_{\rho\to 1-}h(\rho)=
\lim_{\rho\to 1-}[(1-\rho^2)\psi_0''(\rho)I_1(\rho)]=-g_{\ell,m}(1), \]
see \cite{DonZen14}, p.~478.
As a consequence, we infer $u_{\ell,m}\in C^3(0,1]$.
Combining this with the bounds near the center from \cite{DonZen14} we see
that the function $u$ defined in Eq.~\eqref{eq:ansatz} belongs
to $H^2(\B^3)\cap C^3(\overline{\B^3}\backslash\{0\})$.
By elliptic regularity we infer $u\in C^\infty(\B^3)\cap
C^3(\overline{\B^3}\backslash\{0\})$ which implies
$u\in C^3(\overline{\B^3})$ as desired.
\end{proof}
 
As promised, Lemma \ref{lem:degen} implies the desired density property.

\begin{lemma}
\label{lem:dense}
The operator $\tfrac32-\tfrac{2}{p-1}-\tilde{\mb L}$ has dense range.
\end{lemma}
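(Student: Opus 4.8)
The plan is to reduce the claimed density of the range of $\tfrac32-\tfrac{2}{p-1}-\tilde{\mb L}$ to the degenerate elliptic solvability statement of Lemma \ref{lem:degen}. Fix $\mb f=(f_1,f_2)\in\mc H=H^2\times H^1(\B^3)$ and $\varepsilon>0$; I want to produce $\mb u\in\mc D(\tilde{\mb L})=C^3\times C^2(\overline{\B^3})$ with $\|(\tfrac32-\tfrac{2}{p-1})\mb u-\tilde{\mb L}\mb u-\mb f\|<\varepsilon$. Writing out the two components of $(\lambda-\tilde{\mb L})\mb u$ with $\lambda=\tfrac32-\tfrac{2}{p-1}$, the first equation reads
\[
\big(\tfrac32-\tfrac{2}{p-1}\big)u_1+\xi^j\partial_j u_1+\tfrac{2}{p-1}u_1-u_2
=\big(\tfrac32\big)u_1+\xi^j\partial_j u_1-u_2,
\]
which I will use to \emph{eliminate} $u_2$: set
\[
u_2:=\xi^j\partial_j u_1+\tfrac32 u_1-f_1 .
\]
Since I will arrange $u_1\in C^3(\overline{\B^3})$, this gives $u_2\in C^2(\overline{\B^3})$ automatically, and the first component of $(\lambda-\tilde{\mb L})\mb u-\mb f$ vanishes \emph{exactly}. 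It remains to make the second component small in $H^1(\B^3)$.

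Next I substitute this choice of $u_2$ into the second component of $(\lambda-\tilde{\mb L})\mb u$, namely
\[
\big(\tfrac32-\tfrac{2}{p-1}\big)u_2-\Delta u_1+\xi^j\partial_j u_2+\tfrac{p+1}{p-1}u_2 .
\]
Using $u_2=\xi^j\partial_j u_1+\tfrac32 u_1-f_1$ and the commutator identity $\partial_{\xi^k}\partial_{\xi^j}[\xi^i\partial_i g]=2\partial_k\partial_j g+\xi^i\partial_i\partial_k\partial_j g$ (equivalently, $\xi^j\partial_j(\xi^k\partial_k g)=\xi^j\partial_j g+\xi^j\xi^k\partial_j\partial_k g$), a direct computation collects all second-order terms in $u_1$ into $-(\delta^{jk}-\xi^j\xi^k)\partial_j\partial_k u_1$, all first-order terms into a multiple of $\xi^j\partial_j u_1$, and a zeroth-order multiple of $u_1$. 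The coefficients are exactly engineered so that, after moving the $f_1$-terms (which lie in $H^1$ since $f_1\in H^2$) to the right-hand side, the second component equals
\[
-(\delta^{jk}-\xi^j\xi^k)\partial_j\partial_k u_1+5\,\xi^j\partial_j u_1+\tfrac{15}{4}u_1 \;-\;\tilde f_2
\]
for an explicit $\tilde f_2\in H^1(\B^3)$ built linearly from $f_1$ and $f_2$ — precisely the operator appearing in Lemma \ref{lem:degen}. (The appearance of the constants $5$ and $\tfrac{15}{4}$ is the motivation for the specific choice $\lambda=\tfrac32-\tfrac{2}{p-1}$.) I should double-check this algebra carefully — matching the first-order coefficient $5$ and the zeroth-order coefficient $\tfrac{15}{4}$ is the one spot where a slip would be fatal — but it is a finite bookkeeping computation, not a genuine obstacle.

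Finally, apply Lemma \ref{lem:degen} with $f:=\tilde f_2$ and the given $\varepsilon$: it produces $u_1\in C^3(\overline{\B^3})$ such that the $H^1$-norm of $-(\delta^{jk}-\xi^j\xi^k)\partial_j\partial_k u_1+5\xi^j\partial_j u_1+\tfrac{15}{4}u_1-\tilde f_2$ is less than $\varepsilon$. With $u_2$ defined from this $u_1$ as above, $\mb u=(u_1,u_2)\in\mc D(\tilde{\mb L})$, the first component of $(\lambda-\tilde{\mb L})\mb u-\mb f$ is zero, and the second component has $H^1(\B^3)$-norm $<\varepsilon$; since $\|\cdot\|\simeq\|\cdot\|_{H^2\times H^1(\B^3)}$ (Lemma \ref{lem:ip}) and the second component sits in the $H^1$-slot, we get $\|(\lambda-\tilde{\mb L})\mb u-\mb f\|\lesssim\varepsilon$. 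As $\mb f\in\mc H$ and $\varepsilon>0$ were arbitrary, the range of $\tfrac32-\tfrac{2}{p-1}-\tilde{\mb L}$ is dense in $\mc H$. The only nontrivial input is Lemma \ref{lem:degen} itself; everything else is the elimination of $u_2$ and the coefficient-matching computation, so I expect no real difficulty here beyond care with the algebra.
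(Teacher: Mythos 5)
Your approach is the same as the paper's: eliminate $u_2$ via the first equation, reduce the second to a degenerate elliptic equation for $u_1$, apply Lemma~\ref{lem:degen}, and use Lemma~\ref{lem:ip} to pass to the equivalent norm. The coefficient bookkeeping does check out: with $\lambda=\tfrac32-\tfrac{2}{p-1}$ one has $\lambda+\tfrac{2}{p-1}=\tfrac32$ and $\lambda+\tfrac{2}{p-1}+1=\tfrac52$, so the first- and zeroth-order coefficients in Eq.~\eqref{eq:degen} are $2\cdot\tfrac52=5$ and $\tfrac32\cdot\tfrac52=\tfrac{15}{4}$, and the source is $\tilde f_2(\xi)=\xi^j\partial_j f_1(\xi)+\tfrac52 f_1(\xi)+f_2(\xi)\in H^1(\B^3)$.

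There is, however, a genuine gap where you assert that $u_2:=\xi^j\partial_j u_1+\tfrac32 u_1-f_1$ lies in $C^2(\overline{\B^3})$ ``automatically'' because $u_1\in C^3(\overline{\B^3})$. You took $f_1$ to be an arbitrary element of $H^2(\B^3)$, and $H^2(\B^3)$ embeds in $C^{0,1/2}(\overline{\B^3})$ but not in $C^2(\overline{\B^3})$; hence $u_2$ is in general only H\"older continuous, and $\mb u=(u_1,u_2)$ fails to belong to $\mc D(\tilde{\mb L})=C^3\times C^2(\overline{\B^3})$. The argument as written therefore does not produce an admissible $\mb u$. The fix is the one the paper makes at the outset and you omitted: first replace $\mb f$ by a smooth approximation $\mb f'\in C^\infty(\overline{\B^3})\times C^\infty(\overline{\B^3})$ with $\|\mb f-\mb f'\|<\varepsilon$ (such approximants are dense in $\mc H$), then run your construction with $\mb f'$ in place of $\mb f$, so that $u_2=\xi^j\partial_j u_1+\tfrac32 u_1-f_1'\in C^2(\overline{\B^3})$. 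A triangle inequality then closes the density argument. With that one-line repair the proposal coincides with the paper's proof.
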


\begin{proof}
Let $\mb f\in C^\infty(\overline{\B^3})\times C^\infty(\overline{\B^3})$ and $\varepsilon>0$. 
The equation $(\lambda-\tilde{\mb L})\mb u=\mb f$ reads
\[
\left \{ \begin{array}{l}
\xi^j \partial_j u_1(\xi)+(\lambda+\tfrac{2}{p-1}) u_1(\xi)-u_2(\xi)=f_1(\xi) \\
-\partial^j \partial_j u_1(\xi)+\xi^j \partial_j u_2(\xi)
+(\lambda+\tfrac{2}{p-1}+1) u_2(\xi)
=f_2(\xi)
\end{array} \right .
\]
and by inserting the expression for $u_2$ from the first equation into the second one,
we obtain the degenerate elliptic problem
\begin{align}
\label{eq:degen}
&-(\delta^{jk}-\xi^j \xi^k)\partial_j \partial_k u_1(\xi)
+2(\lambda+\tfrac{2}{p-1}+1)\xi^j\partial_j u_1(\xi) \nonumber \\
&+(\lambda+\tfrac{2}{p-1})(\lambda+\tfrac{2}{p-1}+1)u_1(\xi) \nonumber \\
&\quad =\xi^j\partial_j f_1(\xi)+(\lambda+\tfrac{2}{p-1}+1)f_1(\xi)+f_2(\xi).
\end{align}
Setting $\lambda=\frac32-\frac{2}{p-1}$ yields
\[ -(\delta^{jk}-\xi^j\xi^k)\partial_j \partial_k u_1(\xi)
+5\xi^j\partial_j u_1(\xi)+\tfrac{15}{4}u_1(\xi)=f(\xi) \]
where $f\in C^\infty(\overline{\B^3})$ is given by
$f(\xi)=\xi^j\partial_j f_1(\xi)+\tfrac52 f_1(\xi)+f_2(\xi)$.
From Lemma \ref{lem:degen} we infer the existence of a function $u\in C^3(\overline{\B^3})$
such that
\[ g(\xi):=-(\delta^{jk}-\xi^j\xi^k)\partial_j \partial_k u(\xi)
+5\xi^j\partial_j u(\xi)+\tfrac{15}{4}u(\xi) \]
satisfies $\|f-g\|_{H^1(\B^3)}<\varepsilon$.
Now we define $\mb u \in C^3(\overline{\B^3})\times C^2(\overline{\B^3})
=\mc D(\tilde{\mb L})$ 
and $\mb g \in C^2(\overline{\B^3})\times C^1(\overline{\B^3})$ by
\begin{align*}
u_1&:=u \\
u_2(\xi)&:=\xi^j \partial_j u(\xi)+\tfrac32 u(\xi)-f_1(\xi) \\
g_1&:=f_1 \\
g_2(\xi)&:=g(\xi)-\xi^j \partial_j f_1(\xi)-\tfrac52 f_1(\xi).
\end{align*}
By construction, we have $(\frac32-\frac{2}{p-1}-\tilde{\mb L})\mb u=\mb g$
and
\[ \|\mb f-\mb g\|=\|f_2-g_2\|_{H^1(\B^3)}=\|f-g\|_{H^1(\B^3)}<\varepsilon. \]
Consequently, the claim follows from the density 
of $C^\infty(\overline{\B^3})\times 
C^\infty(\overline{\B^3})$ in $\mc H$.
\end{proof}

As a culmination of our efforts we can now establish the desired generation
result.

\begin{proof}[Proof of Proposition \ref{prop:free}]
From Lemmas \ref{lem:sesq} and \ref{lem:sesq3} we have the estimate
\[ \Re(\tilde{\mb L}\mb u|\mb u)\leq -\tfrac{2}{p-1}\|\mb u\|^2 \]
for all $\mb u\in \mc D(\tilde{\mb L})$ 
and by Lemma \ref{lem:dense}, the range of $\frac32-\frac{2}{p-1}-\tilde{\mb L}$
is dense.
Thus, the claim follows from the Lumer-Phillips Theorem \cite{EngNag00}, p.~83, Theorem 3.15.
\end{proof}

\subsection{The modulation ansatz}
Based on Proposition \ref{prop:free}, Eq.~\eqref{eq:syspsi} may be written in abstract
form as
\begin{equation}
\label{eq:Psi}
 \partial_\tau \Psi(\tau)=\mb L \Psi(\tau)+\mb N(\Psi(\tau)) 
 \end{equation}
for a function $\Psi: I \to \mc H$, $I\subset [0,\infty)$ an interval with $0\in I$,
and
\[ \mb N(\mb u):=\left (\begin{array}{c} 0 \\ u_1|u_1|^{p-1} \end{array} \right ). \]
Our goal is to study the stability of the explicit 3-parameter family of static solutions
\[ \Psi_a:=\left (\begin{array}{c}\psi_{a,1} \\ \psi_{a,2}\end{array} \right ) \]
with $\psi_{a,1}$, $\psi_{a,2}$ from Eq.~\eqref{eq:syspsi_a}.
The standard modulation ansatz consists of looking 
for solutions to Eq.~\eqref{eq:Psi} of the form
\begin{equation}
\label{eq:modansatz} \Psi(\tau)=\Psi_{a(\tau)}+\Phi(\tau). 
\end{equation}
We assume that $a_\infty:=\lim_{\tau\to \infty}a(\tau)$ exists and
insert the ansatz \eqref{eq:modansatz} into Eq.~\eqref{eq:Psi}.
This yields the evolution equation
\begin{align} 
\label{eq:Phi}
\partial_\tau \Phi(\tau)-\mb L \Phi(\tau)-\mb L_{a_\infty}' \Phi(\tau)=
&[\mb L'_{a(\tau)}-\mb L_{a_\infty}']\Phi(\tau) \nonumber \\
&+\mb N_{a(\tau)}(\Phi(\tau))-\partial_\tau \Psi_{a(\tau)} 
\end{align}
where
\begin{align*} \mb L'_{a(\tau)}\mb u(\xi)&=\left (\begin{array}{c} 0 \\
p\psi_{a(\tau),1}^{p-1}u_1(\xi) \end{array} \right ) \\
&=\frac{2p(p+1)}{(p-1)^2 [A_0(a(\tau))-A_j(a(\tau))\xi^j]^{2}}\left (\begin{array}{c} 0 \\
u_1(\xi) \end{array} \right ) 
\end{align*}
and 
\[ \mb N_{a(\tau)}(\mb u):=\mb N(\Psi_{a(\tau)}+\mb u)-\mb N(\Psi_{a(\tau)})-
\mb L'_{a(\tau)}\mb u. \]
The right-hand side of Eq.~\eqref{eq:Phi} will be treated perturbatively.
We note the following convenient property of $\mb L'_a$ which in particular yields
the existence of a semigroup that governs the linear evolution.

\begin{lemma}
\label{lem:compact}
Let $a=(a^1,a^2,a^3)\in \R^3$ be sufficiently small.
Then the operator $\mb L_a'$ is compact.
In particular, $\mb L_a:=\mb L+\mb L_a'$ generates
a strongly-continuous one-parameter semigroup $\mb S_a: [0,\infty)\to \mc B(\mc H)$.
\end{lemma}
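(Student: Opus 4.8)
The plan is to establish compactness of $\mb L'_a$ and then read off the semigroup generation from the Bounded Perturbation Theorem. The operator $\mb L'_a$ has the form $\mb L'_a \mb u = (0, V_a u_1)$ where $V_a(\xi) = \frac{2p(p+1)}{(p-1)^2}[A_0(a) - A_j(a)\xi^j]^{-2}$ is a multiplication operator. For $|a|$ sufficiently small, $A_0(a) = 1 + O(|a|^2)$ while $A_j(a) = O(|a|)$, so the quantity $A_0(a) - A_j(a)\xi^j$ stays bounded away from $0$ uniformly for $\xi \in \overline{\B^3}$; hence $V_a \in C^\infty(\overline{\B^3})$ and in particular $V_a$ together with all its derivatives is bounded on $\overline{\B^3}$. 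The key point is then that multiplication by such a smooth function maps $H^1(\B^3)$ boundedly into $H^2(\B^3)$ is \emph{false}, so instead I would argue directly using a compact Sobolev embedding: $\mb u \mapsto u_1$ is bounded from $\mc H = H^2 \times H^1(\B^3)$ into $H^2(\B^3)$, the embedding $H^2(\B^3) \hookrightarrow H^1(\B^3)$ is compact (Rellich--Kondrachov), multiplication by $V_a$ is bounded on $H^1(\B^3)$ since $V_a \in C^\infty(\overline{\B^3})$, and finally the inclusion $H^1(\B^3) \hookrightarrow H^1(\B^3)$ into the second component of $\mc H$ is of course bounded. Composing, $\mb L'_a : \mc H \to \mc H$ factors as
\[
\mc H \xrightarrow{\ \mb u \mapsto u_1\ } H^2(\B^3) \xhookrightarrow{\ \text{compact}\ } H^1(\B^3) \xrightarrow{\ V_a \cdot\ } H^1(\B^3) \xrightarrow{\ v \mapsto (0,v)\ } \mc H,
\]
and a bounded operator composed with a compact one is compact. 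Since $\mc H$ is, by Lemma \ref{lem:ip}, topologically isomorphic to $H^2 \times H^1(\B^3)$, it suffices to verify these mapping properties with respect to the standard Sobolev norms.

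For the second assertion, I would invoke the Bounded Perturbation Theorem (see e.g.\ \cite{EngNag00}): by Proposition \ref{prop:free}, $\mb L$ generates a strongly-continuous semigroup on $\mc H$, and $\mb L'_a$ is a bounded operator on $\mc H$ (compactness trivially implies boundedness). Therefore $\mb L_a = \mb L + \mb L'_a$, with domain $\mc D(\mb L_a) = \mc D(\mb L)$, generates a strongly-continuous one-parameter semigroup $\mb S_a : [0,\infty) \to \mc B(\mc H)$.

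The only mildly delicate point — and the place to be careful — is confirming that multiplication by $V_a$ is a bounded operator on $H^1(\B^3)$; this follows because $V_a$ and $\nabla V_a$ are bounded on $\overline{\B^3}$ (using that $A_0(a) - A_j(a)\xi^j$ is bounded away from zero for small $|a|$), via the product rule $\nabla(V_a v) = (\nabla V_a) v + V_a \nabla v$ together with $\|V_a v\|_{L^2} \le \|V_a\|_{L^\infty}\|v\|_{L^2}$ and $\|(\nabla V_a)v\|_{L^2}\le \|\nabla V_a\|_{L^\infty}\|v\|_{L^2}$. Everything else is a routine invocation of Rellich--Kondrachov and standard semigroup perturbation theory, so there is no real obstacle here; the lemma is essentially a packaging statement that sets up the perturbative analysis of the following sections.
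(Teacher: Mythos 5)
Your argument is correct and follows exactly the same route as the paper: note that the potential $V_a=p\psi_{a,1}^{p-1}$ is smooth for small $|a|$, observe that $\mb L'_a$ maps the first component $u_1\in H^2(\B^3)$ through the compact embedding $H^2(\B^3)\hookrightarrow H^1(\B^3)$ into the second slot, and conclude generation of the semigroup from the Bounded Perturbation Theorem. You have merely spelled out the factorization and the boundedness of multiplication by $V_a$ on $H^1(\B^3)$, which the paper leaves implicit.
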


\begin{proof}
If $a$ is not too large, we have 
$\psi_{1,a}^{p-1}\in C^\infty(\overline{\B^3})$.
Since $\mb L'_a$ maps the first component of a function
$\mb u\in \mc H=H^2(\B^3)\times H^1(\B^3)$
to the second row, the compactness of $H^2(\B^3)\hookrightarrow H^1(\B^3)$ shows
that $\mb L_a'$ is compact.
The existence of $\mb S_a$ 
follows from the Bounded Perturbation Theorem.
\end{proof}

\section{Spectral analysis}

\noindent Lemma \ref{lem:compact} yields the existence of the semigroup $\mb S_a$, 
but establishing 
a useful growth estimate for $\mb S_a$ is nontrivial and requires a detailed
spectral analysis of the generator $\mb L_a$.
The fact that $\mb L_a$ depends on a parameter complicates matters even further.
However, we are only interested in small $a$ which allows for a perturbative approach.

\subsection{The spectrum of $\mb L_0$}
We start the analysis with the case $a=0$ where the spectral equation can be 
solved explicitly, as it turns out.
First, however, we observe the general fact that every spectral point of $\mb L_a$ outside
of $\sigma(\mb L)$ is an eigenvalue.

\begin{lemma}
\label{lem:point}
Let $a\in \R^3$ be sufficiently small. If $\lambda \in \sigma(\mb L_a)\backslash
\sigma(\mb L)$ then $\lambda\in \sigma_p(\mb L_a)$.
\end{lemma}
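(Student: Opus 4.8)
The statement is the standard "outside the essential spectrum, spectral points are eigenvalues" fact, and the natural route is via the analytic Fredholm theorem combined with the compactness of $\mb L_a'$ established in Lemma \ref{lem:compact}. First I would fix $\lambda \in \sigma(\mb L_a) \setminus \sigma(\mb L)$; since $\lambda \notin \sigma(\mb L)$, the resolvent $\mb R_{\mb L}(\lambda) = (\lambda - \mb L)^{-1}$ is a bounded operator on $\mc H$. The key algebraic identity is the factorization
\[ \lambda - \mb L_a = \lambda - \mb L - \mb L_a' = \big(\id - \mb L_a' \mb R_{\mb L}(\lambda)\big)(\lambda - \mb L), \]
valid on $\mc D(\mb L) = \mc D(\mb L_a)$. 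Since $\lambda - \mb L$ is a bijection from $\mc D(\mb L)$ onto $\mc H$ with bounded inverse, invertibility of $\lambda - \mb L_a$ is equivalent to invertibility of $\id - \mb L_a' \mb R_{\mb L}(\lambda)$ on $\mc H$. Now $\mb L_a'$ is compact by Lemma \ref{lem:compact} and $\mb R_{\mb L}(\lambda)$ is bounded, so $\mb K := \mb L_a' \mb R_{\mb L}(\lambda)$ is a compact operator on $\mc H$.

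**Finishing via the Fredholm alternative.** By the Fredholm alternative for compact operators, $\id - \mb K$ is either invertible or has nontrivial kernel. Since $\lambda \in \sigma(\mb L_a)$, the operator $\lambda - \mb L_a$ is not boundedly invertible, hence $\id - \mb K$ is not invertible, hence $\ker(\id - \mb K) \neq \{0\}$: there exists $\mb v \in \mc H \setminus \{0\}$ with $\mb K \mb v = \mb v$. Set $\mb u := \mb R_{\mb L}(\lambda) \mb v \in \mc D(\mb L)$. Then $\mb u \neq 0$ (as $\mb R_{\mb L}(\lambda)$ is injective), and from the factorization above,
\[ (\lambda - \mb L_a)\mb u = \big(\id - \mb K\big)(\lambda - \mb L)\mb u = (\id - \mb K)\mb v = 0, \]
so $\lambda$ is an eigenvalue of $\mb L_a$ with eigenfunction $\mb u$, i.e. $\lambda \in \sigma_p(\mb L_a)$.

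**Where the difficulty lies.** There is no serious analytic obstacle here — the argument is essentially a textbook consequence of compact perturbation theory (the relevant statement is, e.g., in Engel–Nagel). The only points requiring a word of care are: (i) checking that $\mc D(\mb L_a) = \mc D(\mb L)$, which is immediate since $\mb L_a = \mb L + \mb L_a'$ with $\mb L_a'$ bounded (indeed compact), so the bounded perturbation does not change the domain; and (ii) the smallness hypothesis on $a$, which is inherited directly from Lemma \ref{lem:compact} and is only needed there to guarantee $\psi_{a,1}^{p-1} \in C^\infty(\overline{\B^3})$ so that $\mb L_a'$ is genuinely compact. Everything else is the standard Fredholm dichotomy. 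I would therefore keep the proof short, citing Lemma \ref{lem:compact} for compactness and invoking the Fredholm alternative directly.
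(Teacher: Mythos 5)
Your proof is correct and follows essentially the same route as the paper: both use the factorization $\lambda-\mb L_a=\big(\id-\mb L'_a\mb R_{\mb L}(\lambda)\big)(\lambda-\mb L)$, the compactness of $\mb L'_a\mb R_{\mb L}(\lambda)$ from Lemma \ref{lem:compact}, and the Fredholm/Riesz dichotomy for compact operators to produce an eigenfunction $\mb R_{\mb L}(\lambda)\mb v$. The paper phrases the middle step as ``$1\in\sigma(\mb L'_a\mb R_{\mb L}(\lambda))$ implies $1\in\sigma_p(\mb L'_a\mb R_{\mb L}(\lambda))$ by compactness,'' which is exactly the Fredholm alternative you invoke.
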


\begin{proof}
We use the identity $\lambda-\mb L_a=
[1-\mb L'_a\mb R_{\mb L}(\lambda)](\lambda-\mb L)$ to see that
$\lambda\in \sigma(\mb L_a)$ implies
$1\in \sigma(\mb L'_a \mb R_{\mb L}(\lambda))$.
By the compactness of $\mb L'_a \mb R_{\mb L}(\lambda)$ (Lemma \ref{lem:compact})
we obtain
$1\in \sigma_p(\mb L'_a \mb R_{\mb L}(\lambda))$.
Let $\mb u$ be a corresponding eigenfunction.
Then $\mb R_{\mb L}(\lambda)\mb u$ is an eigenfunction of $\mb L_a$
with eigenvalue $\lambda$
and we obtain $\lambda\in \sigma_p(\mb L_a)$ as claimed.
\end{proof}

Now we can give a sufficiently detailed description of the spectrum of $\mb L_0$.

\begin{lemma}
\label{lem:spec0}
We have
\[ \sigma(\mb L_0) \subset \{z\in \C: \Re z\leq -\tfrac{2}{p-1}\}\cup \{0,1\} \]
and $\{0,1\}\subset \sigma_p(\mb L_0)$.
Furthermore, the geometric eigenspaces of the eigenvalues $1$ and $0$ are spanned
by the functions $\mb g_0$ and $\mb h_{0,j}$, respectively, where
\begin{align*} 
\mb g_0(\xi)&=\left (\begin{array}{c}1 \\ \frac{p+1}{p-1} \end{array} \right ) \\
\mb h_{0,j}(\xi)&=\left. \partial_{a^j}\Psi_a(\xi)\right |_{a=0}=
\tfrac{2c_p}{p-1}\left (\begin{array}{c}\xi_j \\ \frac{p+1}{p-1}\xi_j \end{array} \right ).
\end{align*}
\end{lemma}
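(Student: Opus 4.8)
The plan is to reduce the spectral equation $(\lambda-\mb L_0)\mb u = 0$ to a scalar ODE via the first-order structure of $\mb L_0$, then solve that ODE explicitly. First I would use the two rows of $\tilde{\mb L}$ together with the potential term $p\psi_{0,1}^{p-1} = \frac{2p(p+1)}{(p-1)^2}$ (note $\psi_0 = \psi_{1,0}$ is the \emph{constant} $c_p$, so the potential is a genuine constant here) to eliminate $u_2$ exactly as in the proof of Lemma \ref{lem:dense}: solve the first equation for $u_2 = \xi^j\partial_j u_1 + (\lambda+\tfrac{2}{p-1})u_1$ and substitute into the second. This yields a single degenerate elliptic equation for $u_1$ of the form
\[ -(\delta^{jk}-\xi^j\xi^k)\partial_j\partial_k u_1 + 2(\lambda+\tfrac{2}{p-1}+1)\xi^j\partial_j u_1 + \Big[(\lambda+\tfrac{2}{p-1})(\lambda+\tfrac{2}{p-1}+1) - \tfrac{2p(p+1)}{(p-1)^2}\Big]u_1 = 0. \]
Decomposing in spherical harmonics $u_1(\rho\omega) = \sum u_{\ell,m}(\rho)Y_{\ell,m}(\omega)$ as in the proof of Lemma \ref{lem:degen} reduces this to a family of ODEs in $\rho$ indexed by $\ell$; the substitution $u_{\ell,m}(\rho) = \rho^\ell v(\rho^2)$ should again transform each into a hypergeometric equation, now with $\lambda$-dependent parameters $a(\lambda,\ell)$, $b(\lambda,\ell)$, $c(\ell)$.

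Next I would impose the two regularity requirements that define membership in $\mc D(\mb L) \subset \mc H$: analyticity (equivalently, the non-singular local behavior $\rho^\ell$) at the center $\rho = 0$, which selects the solution $\psi_0(\rho) = \rho^\ell\,{}_2F_1(a,b,c;\rho^2)$, and sufficient regularity at the lightcone boundary $\rho = 1$ to land in $H^2\times H^1$. Generically ${}_2F_1(a,b,c;\rho^2)$ has a $(1-\rho^2)^{c-a-b}$-type singularity at $\rho=1$; the eigenvalue condition is precisely that the connection coefficient multiplying the singular branch vanishes, i.e. $\frac{\Gamma(c)\Gamma(a+b-c)}{\Gamma(a)\Gamma(b)} = 0$, which happens iff $a \in -\N_0$ or $b \in -\N_0$ (or the singularity is mild enough, $c-a-b > -\tfrac12$, to be harmless — one must check which regime applies for which $\ell$). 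Writing $a,b$ explicitly in terms of $\lambda$ and $\ell$, this is a sequence of explicit equations for $\lambda$, and I would solve them to see that the only roots with $\Re\lambda > -\tfrac{2}{p-1}$ are $\lambda = 1$ (arising in the $\ell=0$ sector) and $\lambda = 0$ (arising in the $\ell=1$ sector, with multiplicity $3$ from $m\in\{-1,0,1\}$), matching the claimed eigenfunctions $\mb g_0$ and $\mb h_{0,j}$. The containment $\sigma(\mb L_0)\setminus\{0,1\} \subset \{\Re z \le -\tfrac{2}{p-1}\}$ then follows because Lemma \ref{lem:point} (with $a=0$, which is covered since $\mb L_0' = \mb L + \mb L_0' - \mb L$ and $\mb L_0'$ is compact) reduces any spectral point outside $\sigma(\mb L)$ to an eigenvalue, Proposition \ref{prop:free} gives $\sigma(\mb L)\subset\{\Re z \le -\tfrac{2}{p-1}\}$, and the eigenvalue analysis just described handles eigenvalues in $\Re z > -\tfrac{2}{p-1}$.

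The remaining task is to verify that $\mb g_0$ and $\mb h_{0,j}$ are indeed eigenfunctions with the stated eigenvalues and that the geometric eigenspaces are exactly their spans. That $\mb g_0$ is an eigenfunction with eigenvalue $1$ is a direct substitution into $\mb L_0 \mb g_0 = \mb g_0$ using $p\psi_{0,1}^{p-1} = \frac{2p(p+1)}{(p-1)^2}$ and $c_p^{p-1} = \frac{2(p+1)}{(p-1)^2}$; that $\mb h_{0,j} = \partial_{a^j}\Psi_a|_{a=0}$ lies in $\ker \mb L_0$ is the abstract statement that differentiating the static-solution identity $\mb L\Psi_a + \mb N(\Psi_a) = 0$ in $a^j$ at $a=0$ gives $(\mb L + \mb L_0')\partial_{a^j}\Psi_a|_{a=0} = 0$. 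One then computes $\partial_{a^j}\Psi_a|_{a=0}$ explicitly from \eqref{eq:syspsi_a} using $\partial_{a^j}A_0(a)|_{a=0}=0$ and $\partial_{a^j}A_k(a)|_{a=0}=\delta_{jk}$ to get the stated formula. Simplicity/dimension of the eigenspaces comes from the ODE analysis: in each relevant $\ell$-sector the hypergeometric equation has a one-dimensional space of solutions regular at $\rho=0$, so each $(\ell,m)$ contributes at most one eigenfunction, giving multiplicity $1$ for $\lambda=1$ and $3$ for $\lambda=0$.

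\textbf{Main obstacle.} The delicate point is the boundary analysis at $\rho=1$: correctly identifying, for each $\ell$, exactly when the solution regular at the center also has enough regularity at the lightcone to belong to $\mc H = H^2\times H^1(\B^3)$ — i.e. pinning down the precise threshold on $c-a-b$ (equivalently on the exponent $-\tfrac12$ appearing in $\tilde\phi_1$ in the proof of Lemma \ref{lem:degen}) below which the singular branch is excluded by the $H^2$ requirement but above which it is admissible — and checking no spurious eigenvalues with $\Re\lambda > -\tfrac{2}{p-1}$ sneak in from high-$\ell$ sectors or from the interplay between the two regularity conditions. I expect this to be handled by carefully tracking the hypergeometric connection formulas, and the relevant computations are essentially those already referenced in \cite{DonZen14}.
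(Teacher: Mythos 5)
Your approach matches the paper's almost exactly: eliminate $u_2$, reduce to the degenerate elliptic equation for $u_1$, decompose in spherical harmonics, substitute $u_{\ell,m}(\rho)=\rho^\ell v(\rho^2)$ to reach a hypergeometric equation, exclude the branches singular at $\rho=0$ and $\rho=1$, and read off the quantization $-a\in\N_0$ or $-b\in\N_0$ from the connection formula. The one place you hesitated — the ``main obstacle'' of pinning down the $H^2$ threshold at $\rho=1$ — is simpler than you fear, but the exponent you quote, $-\tfrac12$, is wrong: that is the $L^2$ threshold. The singular branch $\tilde v_1(z)=(1-z)^{c-a-b}\cdot(\text{analytic})$ belongs to $H^2(\tfrac12,1)$ iff $\Re(c-a-b)>\tfrac32$ (second derivative in $L^2$). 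Crucially, $c-a-b=1-\lambda-\tfrac{2}{p-1}$ is \emph{independent of} $\ell$, so there is no $\ell$-dependent regime-checking at all; and for $\Re\lambda>-\tfrac{2}{p-1}$ one has $\Re(c-a-b)<1<\tfrac32$, so the singular branch is excluded from $H^2$ uniformly in $\ell$, and the connection coefficient must vanish, exactly as you anticipated. (The borderline case $c-a-b=0$, where the singularity is logarithmic, needs a separate one-line observation — the paper handles it in a footnote — but it leads to the same conclusion.) If you had taken your $-\tfrac12$ at face value, the argument would have collapsed, since then the singular branch would be admissible for a large range of $\lambda$ including $0$ and $1$; so the correct threshold is load-bearing. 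Your alternative derivation of $\mb h_{0,j}\in\ker\mb L_0$ by differentiating the static-solution identity in $a^j$ is a clean substitute for the paper's ``straightforward computation.''
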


\begin{proof}
The growth bound $\|\mb S(\tau)\|\leq e^{-\frac{2}{p-1}\tau}$ from 
Proposition \ref{prop:free} implies
$\sigma(\mb L)\subset \{z\in \C: \Re z\leq -\tfrac{2}{p-1}\}$.
Thus, by Lemma \ref{lem:point} it suffices
to look for eigenvalues of $\mb L_0$.
The eigenvalue equation $(\lambda-\mb L_0)\mb u=0$ reduces to
\begin{align}
\label{eq:spec}
-(\delta^{jk}&-\xi^j \xi^k)\partial_j \partial_k u_1(\xi)
+2(\lambda+\tfrac{2}{p-1}+1)\xi^j\partial_j u_1(\xi) \nonumber \\
&+(\lambda+\tfrac{2}{p-1})(\lambda+\tfrac{2}{p-1}+1)u_1(\xi)-p\psi_{0,1}^{p-1}u_1(\xi)=0 
\end{align}
and $u_2(\xi)=\xi^j\partial_j u_1(\xi)+(\lambda+\tfrac{2}{p-1})u_1(\xi)$, see
Eq.~\eqref{eq:degen}.
Thus, it suffices to solve Eq.~\eqref{eq:spec}.
Recall from Eq.~\eqref{eq:syspsi_a} that $p\psi_{0,1}^{p-1}(\xi)=\frac{2p(p+1)}{(p-1)^2}$, i.e.,
it is in fact a constant.
If $\mb u \in \mc D(\mb L)$ satisfies $(\lambda-\mb L_0)\mb u=0$ 
then $u_1\in H^2(\B^3)$ and elliptic regularity applied to Eq.~\eqref{eq:spec}
implies $u_1\in C^\infty(\B^3)\cap H^2(\B^3)$.
Consequently, by introducing polar coordinates $\xi=\rho\omega$, 
$(\rho,\omega)\in (0,1]\times \S^2$, we may expand $u_1$ as
\[ u_1(\rho\omega)=\sum_{\ell=0}^\infty \sum_{m=-\ell}^\ell u_{\ell,m}(\rho)Y_{\ell,m}(\omega) \]
with $u_{\ell,m}(\rho)=(u_1(\rho\,\cdot)|Y_{\ell,m})_{\S^2}$ and for any $\delta>0$,
the sum converges in $H^k(\B^3_{1-\delta})$ for arbitrary $k\in \N_0$, see
Appendix \ref{sec:apx}. 
Since the radial derivative and the Laplace-Beltrami operator commute with the
sum, Eq.~\eqref{eq:spec} is equivalent to
the decoupled system of ODEs
\begin{align}
\label{eq:specode}
\big [-(1&-\rho^2)\partial_\rho^2-\tfrac{2}{\rho}\partial_\rho
+2(\lambda+\tfrac{2}{p-1}+1)\rho\partial_\rho+\tfrac{\ell(\ell+1)}{\rho^2} \nonumber \\
&+(\lambda+\tfrac{2}{p-1})(\lambda+\tfrac{2}{p-1}+1)
-\tfrac{2p(p+1)}{(p-1)^2}\big ]u_{\ell,m}(\rho)=0
\end{align}
and $u_1 \in C^\infty(\B^3)\cap H^2(\B^3)$ implies $u_{\ell,m}\in C^\infty[0,1)
\cap H^2(\frac12,1)$.
As in the proof of Lemma \ref{lem:degen} we suppress the subscripts and
set $u_{\ell,m}(\rho)=\rho^\ell v(\rho^2)$.
Then $u_{\ell,m}$ satisfies Eq.~\eqref{eq:specode} iff $v$ solves the hypergeometric
differential equation
\begin{equation}
\label{eq:hypgeo}
 z(1-z)v''(z)+[c-(a+b+1)z]v'(z)-abv(z)=0 
 \end{equation}
where $z=\rho^2$ and $a=\frac{\lambda}{2}-\frac12+\frac{\ell}{2}$,
$b=\frac{\lambda}{2}+\frac{p+1}{p-1}+\frac{\ell}{2}$, $c=\frac32+\ell$.
Note that $u_{\ell,m}\in H^2(\frac12,1)$ implies $v\in H^2(\frac12,1)$.
We consider the solutions
\begin{align*}
v_0(z)&:={}_2F_1(a,b,c;z) \\
\tilde v_0(z)&:=z^{1-c}{}_2F_1(a-c+1, b-c+1, 2-c; z) \\
v_1(z)&:={}_2F_1(a,b,a+b+1-c;1-z) \\
\tilde v_1(z)&:=(1-z)^{c-a-b}{}_2F_1(c-a, c-b, c-a-b+1; 1-z)
\end{align*}
with ${}_2F_1$ the standard hypergeometric function (see \cite{DLMF}).
Since we are only interested in $\Re\lambda>-\tfrac{2}{p-1}$, we obtain
\[ \Re(c-a-b)=1-\Re \lambda-\tfrac{2}{p-1}<1 \] 
and thus, the condition $v \in H^2(\frac12,1)$ 
excludes\footnote{To be honest, if $c-a-b=0$ one needs a slightly different
argument. In this pathological
case the ``bad'' solution at $z=1$ is not given by $\tilde v_1$ but it
behaves like a logarithm. However, this leads to the same conclusion since $z\mapsto
\log(1-z)$
is not in $H^2(\frac12,1)$ either.} the solution $\tilde v_1$.
Similarly, the solution $\tilde v_0$ is not admissible either since it would lead
to a $u_{\ell,m}$ that behaves like $\rho^{-1-\ell}$ as $\rho\to 0+$ which contradicts
$u_{\ell,m}\in C^\infty[0,1)$.
As a consequence, since both $\{v_0,\tilde v_0\}$ and $\{v_1,\tilde v_1\}$ are
fundamental systems for Eq.~\eqref{eq:hypgeo}, 
we see that $v_0$ and $v_1$ must be linearly dependent.
In view of the connection formula \cite{DLMF}
\[ v_0(z)=\frac{\Gamma(c)\Gamma(c-a-b)}{\Gamma(c-a)\Gamma(c-b)}v_1(z)
+\frac{\Gamma(c)\Gamma(a+b-c)}{\Gamma(a)\Gamma(b)}\tilde v_1(z) \]
this is only possible if $\frac{\Gamma(c)\Gamma(a+b-c)}{\Gamma(a)\Gamma(b)}=0$.
Since the $\Gamma$-function does not have zeros, we see that $a$ or $b$ must be
a pole of $\Gamma$, i.e., we obtain $-a\in \N_0$ or $-b\in \N_0$.
The latter condition translates into $\lambda=-\frac{2}{p-1}-\frac{2p}{p-1}-\ell-2n$
for some $n\in \N_0$ but this cannot hold for any $n\in \N_0$
since we assume $\Re\lambda>-\frac{2}{p-1}$.
Consequently, we are left with $-a\in \N_0$ which means that
$\lambda=1-\ell-2n$
for some $n\in \N_0$.
This is compatible with $\Re\lambda>-\frac{2}{p-1}>-1$ only if
$n=0$ and $\ell \in \{0,1\}$, i.e., $\lambda \in \{0,1\}$.
This shows $\sigma(\mb L_0)\subset \{z\in \C: \Re z\leq -\frac{2}{p-1}\}\cup \{0,1\}$.

Moreover, a straightforward computation shows that $\mb g_0$ and $\mb h_{0,j}$ are
eigenfunctions of $\mb L_0$ with eigenvalues $1$ and $0$, respectively, which implies
$\{0,1\}\subset \sigma_p(\mb L_0)$.
Finally, the above derivation also shows that the geometric eigenspaces of
$1$ and $0$ are at most $1$-dimensional and $3$-dimensional, respectively.
\end{proof}

Now we define the usual Riesz projections associated to the eigenvalues
$0$ and $1$ of $\mb L_0$.
We set
\begin{align*}
\mb P_0&:=\frac{1}{2\pi \I}\int_{\gamma_0}\mb R_{\mb L_0}(z)dz \\
\mb Q_0&:=\frac{1}{2\pi \I}\int_{\gamma_1}\mb R_{\mb L_0}(z)dz
\end{align*}
with the curves $\gamma_j: [0,1]\to \C$, $j\in \{0,1\}$, given by
\[ \gamma_0(s)=\tfrac{1}{p-1}e^{2\pi \I s},\qquad
\gamma_1(s)=1+\tfrac{1}{2}e^{2\pi \I s}. \]

The following important result shows that the algebraic multiplicities of the eigenvalues
$1$ and $0$ equal their geometric multiplicities. 
Note that this is a nontrivial fact since we are in a highly nonself-adjoint setting.

\begin{lemma}
\label{lem:alg0}
The projections $\mb P_0$ and $\mb Q_0$ have rank $1$ and $3$, respectively.
\end{lemma}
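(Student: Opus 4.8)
The plan is to show that, for each $\lambda\in\{0,1\}$, the algebraic multiplicity of $\lambda$ as an eigenvalue of $\mb L_0$ equals its geometric multiplicity; since $\mb P_0$ and $\mb Q_0$ are the Riesz projections associated with the eigenvalues $0$ and $1$, this identifies their ranks with the geometric multiplicities determined in Lemma~\ref{lem:spec0} and thus proves the lemma. First I would record the standing structural facts. Because $\mb L_0'$ is compact (Lemma~\ref{lem:compact}), $\mb L_0=\mb L+\mb L_0'$ has the same essential spectrum as $\mb L$, which by Proposition~\ref{prop:free} is contained in $\{z\in\C:\Re z\leq-\tfrac{2}{p-1}\}$; hence $0$ and $1$ are isolated points of $\sigma(\mb L_0)$ of finite algebraic multiplicity, $\mb P_0$ and $\mb Q_0$ are well defined, and their ranks equal $\dim\bigcup_{k\in\N}\ker(\mb L_0-\lambda)^k$ for $\lambda=0$ and $\lambda=1$ respectively. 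Therefore it suffices to rule out Jordan chains of length $\geq 2$: I must show that for $\lambda\in\{0,1\}$ there is no $\mb u\in\mc D(\mb L_0)$ with $(\mb L_0-\lambda)\mb u=\mb v$ for some nonzero $\mb v\in\ker(\mb L_0-\lambda)$.

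To this end I would exploit that $\psi_{0,1}$ is constant, so $\mb L_0$ commutes with rotations of $\R^3$ and preserves the decomposition into angular momentum sectors. By Lemma~\ref{lem:spec0} the eigenfunction of $\lambda=1$ lives in the sector $\ell=0$ and those of $\lambda=0$ in the sector $\ell=1$; writing $\ell_0=0$ if $\lambda=1$ and $\ell_0=1$ if $\lambda=0$, the equation $(\mb L_0-\lambda)\mb u=\mb v$ forces each angular component $\mb u^{(\ell)}$ with $\ell\neq\ell_0$ to solve $(\mb L_0-\lambda)\mb u^{(\ell)}=0$. The spectral computation in the proof of Lemma~\ref{lem:spec0} shows that the only eigenvalues of $\mb L_0$ in $\{\Re z>-\tfrac{2}{p-1}\}$ within the sector $\ell$ are $1-\ell-2n$, $n\in\N_0$; in particular $\lambda$ is not an eigenvalue in any sector $\ell\neq\ell_0$, so $\mb u^{(\ell)}=0$ there and $\mb u$ is concentrated in the sector $\ell_0$. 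Passing to polar coordinates as in Lemma~\ref{lem:spec0} and eliminating $u_2$ via Eq.~\eqref{eq:degen}, the radial profile of $u_1$ then solves a single inhomogeneous ODE whose homogeneous part is the hypergeometric equation~\eqref{eq:hypgeo} at the resonant parameters $a=0$, $b=\tfrac{3p+1}{2(p-1)}$, $c=\tfrac32+\ell_0$, with a constant right-hand side that is explicitly the source produced by $\mb v$ in Eq.~\eqref{eq:degen} and is nonzero.

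The heart of the argument is the endpoint analysis of this ODE. At $a=0$ the local hypergeometric solutions ${}_2F_1(a,b,c;z)$ at $z=0$ and ${}_2F_1(a,b,a+b+1-c;1-z)$ at $z=1$ both degenerate to the constant $1$, whereas the remaining local solution at $z=1$ is $(1-z)^{c-a-b}{}_2F_1(c-a,c-b,c-a-b+1;1-z)$, and one computes $c-a-b=-\tfrac{2}{p-1}$ if $\lambda=1$ and $c-a-b=\tfrac{p-3}{p-1}$ if $\lambda=0$. Regularity of $u_1$ at the origin excludes the exponent $1-c$ at $z=0$ and hence pins the radial profile to the variation-of-constants particular solution plus a multiple of the constant solution; since the inhomogeneity is a nonzero constant, the coefficient of the $z=1$ singular solution in this profile is a nonzero Beta-type integral, and it cannot be cancelled because the constant solution is regular at $z=1$. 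Consequently the profile behaves near $\rho=1$ like a nonzero multiple of $(1-\rho)^{c-a-b}$, whose second derivative is not square-integrable on $(\tfrac12,1)$ for any $p>3$, contradicting $u_1\in H^2(\B^3)$. Hence no Jordan chain of length $\geq 2$ exists for either eigenvalue, the algebraic and geometric multiplicities coincide, and $\mb P_0$, $\mb Q_0$ have the stated ranks.

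I expect the main obstacle to be precisely this endpoint analysis at the characteristic boundary $\rho=1$: one has to verify carefully that the forced constant inhomogeneity genuinely excites the bad local solution with a nonzero coefficient (this is where the resonance $a=0$ is essential), and that the resulting mild singularity really lies outside $H^2$ for all admissible $p$. The remaining ingredients --- the spectral bookkeeping, the rotational decoupling, the elliptic regularity of $u_1$ inside $\B^3$, and the reduction to hypergeometric functions --- are routine and parallel the computations already carried out in the proofs of Lemmas~\ref{lem:spec0} and~\ref{lem:degen} and in \cite{DonZen14}.
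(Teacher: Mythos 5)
Your proposal follows the paper's argument in all essentials: finite rank from the compact perturbation, reduction to ruling out Jordan chains of length $\geq 2$, decomposition into angular momentum sectors to isolate the relevant $\ell_0$-mode, passage to a radial ODE, and finally the observation that the $H^2$-admissibility of the Jordan vector at the characteristic endpoint $\rho=1$ forces a certain positive integral to vanish, which is impossible. The only cosmetic difference is that you treat both eigenvalues uniformly in the hypergeometric language of Lemma~\ref{lem:degen} (resonance $a=0$, singular exponent $c-a-b$, Beta-type coefficient), whereas the paper's proof for $\mb Q_0$ simply notices the polynomial homogeneous solution $\phi(\rho)=\rho$, performs reduction of order explicitly, and then asserts that the $\mb P_0$ case is ``the exact same argument''; the two phrasings compute the same thing.
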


\begin{proof}
First of all we note that $\mb P_0$ and $\mb Q_0$ have finite
rank. This is because the eigenvalues $0$ and $1$ are generated by a compact perturbation
and an eigenvalue of infinite algebraic multiplicity is invariant under such
a perturbation (see \cite{Kat95}, p.~ 239, Theorem 5.28 and p.~244, Theorem 5.35).

Next, observe that $\rg \mb Q_0 \subset \mc D(\mb L)$.
To see this, let $\mb v\in \rg \mb Q_0$.
By the density of $\mc D(\mb L)$ in $\mc H$ 
we find $(\mb u_n)\subset \mc D(\mb L)$ with
$\mb u_n \to \mb v$.
Since $\mb Q_0 \mc D(\mb L)\subset \mc D(\mb L)$ (\cite{Kat95}, p.~178, Theorem 6.17)
we see that $(\mb v_n):=(\mb Q_0 \mb u_n)\subset
\mc D(\mb L)\cap \rg \mb Q_0$ and $\mb v_n=\mb Q_0 \mb u_n \to \mb Q_0 \mb v=\mb v$ 
by the boundedness of $\mb Q_0$.
The operator $\mb L_0|_{\mc D(\mb L)\cap \rg \mb Q_0}$ 
is bounded and this implies $\mb L_0 \mb v_n\to \mb f$
for some $\mb f\in \rg \mb Q_0$.
Thus, the closedness of $\mb L_0$ implies $\mb v\in \mc D(\mb L)$ and we obtain
$\rg \mb Q_0\subset \mc D(\mb L)$ as claimed.

$\mb A:=\mb L_0|_{\rg \mb Q_0}$ is a bounded operator
on the finite-dimensional Hilbert space $\rg \mb Q_0$ with spectrum equal to 
$\{0\}$ (\cite{Kat95}, p.~178, Theorem 6.17).
This implies that
$\mb A$ is nilpotent, i.e., there exists a (minimal) $n\in \N$
such that $\mb A^n=\mb 0$.
If $n=1$ we have $\rg \mb Q_0=\ker \mb A$ and any element of $\rg \mb Q_0$
is an eigenvector of $\mb A$ (hence $\mb L_0$) with eigenvalue $0$.
From Lemma \ref{lem:spec0} we infer $\rg \mb Q_0=\langle \mb h_{0,1},\mb h_{0,2},
\mb h_{0,3}\rangle$ which shows that $\mb Q_0$ has rank $3$.
Now suppose $n\geq 2$. 
Then there exists a $\mb u\in \rg \mb Q_0 \subset \mc D(\mb L)$ 
such that $\mb A\mb u$ is a nontrivial element of $\ker \mb A\subset \ker \mb L_0$, i.e.,
we have $\mb L_0 \mb u=\mb f$ for some nonzero $\mb f\in \ker \mb L_0$.
We obtain the equation
 \begin{align}
-(\delta^{jk}&-\xi^j \xi^k)\partial_j \partial_k u_1(\xi)
+2(\tfrac{2}{p-1}+1)\xi^j\partial_j u_1(\xi) \nonumber \\
&+\tfrac{2}{p-1}(\tfrac{2}{p-1}+1)u_1(\xi)-\tfrac{2p(p+1)}{(p-1)^2} u_1(\xi)=
f(\xi) 
\end{align}
with $f(\xi)=\xi^j \partial_j f_1(\xi)+(\frac{2}{p-1}+1)f_1(\xi)+f_2(\xi)$, cf.~Eq.~\eqref{eq:degen}.
From Lemma \ref{lem:spec0} we infer that $f$ is of the form 
\[ f(\xi)=\tilde \alpha_j \xi^j
=|\xi|\sum_{m=-1}^1 \alpha_m Y_{1,m}(\tfrac{\xi}{|\xi|}) \]
where $\alpha_m\not=0$ for at least one $m\in \{-1,0,1\}$
and without loss of generality we may assume $\alpha_0=1$.
Consequently, an angular momentum decomposition as in the proof of Lemma \ref{lem:spec0}
leads to the inhomogeneous ODE
\begin{align}
\label{eq:odeinh}
\big [-(1&-\rho^2)\partial_\rho^2-\tfrac{2}{\rho}\partial_\rho
+2(\tfrac{2}{p-1}+1)\rho\partial_\rho+\tfrac{2}{\rho^2} \nonumber \\
&+\tfrac{2}{p-1}(\tfrac{2}{p-1}+1)
-\tfrac{2p(p+1)}{(p-1)^2}\big ]u_{1,0}(\rho)=\rho
\end{align}
for the function $u_{1,0}(\rho)=(u_1(\rho\,\cdot)|Y_{1,0})_{\S^2}$
and we have $u_{1,0}\in C^\infty(0,1)\cap H^2_\mathrm{rad}(0,1)$.
The homogeneous version of Eq.~\eqref{eq:odeinh} has the solution
$\phi(\rho)=\rho$ and the usual reduction ansatz immediately yields a second
solution $\psi$ given by\footnote{Recall that we assume $p>3$, i.e., $\frac{2}{p-1}<1$.}
\[ \psi(\rho)=-\rho \int_\rho^1 \frac{ds}{s^4(1-s^2)^\frac{2}{p-1}}. \]
By construction, we have $W(\rho):=W(\phi,\psi)(\rho)=\rho^{-2}(1-\rho^2)^{-\frac{2}{p-1}}$
and the variation of constants formula shows that $u_{1,0}$ must be of the form
\begin{align*} u_{1,0}(\rho)=&c_0 \phi(\rho)+
c_1 \psi(\rho)-\psi(\rho)\int_{\rho_1}^\rho \frac{\phi(s)}{W(s)(1-s^2)}sds \\
&+\phi(\rho)\int_{\rho_0}^\rho
\frac{\psi(s)}{W(s)(1-s^2)}sds 
\end{align*}
for constants $c_0,c_1 \in \C$ and $\rho_0,\rho_1\in [0,1]$.
We have $|\psi(\rho)|\simeq \rho^{-2}$ as $\rho \to 0+$ and thus, the property
$u_{1,0}\in L^2_\mathrm{rad}(0,1)$ shows that we must have $c_1=\int_{\rho_1}^0 \frac{\phi(s)}
{W(s)(1-s^2)}sds$ which leaves us with
\begin{align*} u_{1,0}(\rho)=&c_0 \phi(\rho)-\psi(\rho)
\int_0^\rho \frac{\phi(s)}{W(s)(1-s^2)}sds \\
&+\phi(\rho)\int_{\rho_0}^\rho
\frac{\psi(s)}{W(s)(1-s^2)}sds. 
\end{align*}
Furthermore, since $\psi(\rho)=(1-\rho)^{1-\frac{2}{p-1}}h(\rho)$ 
for some function $h$ which is smooth at $1$ and $h(1)\not=0$, we see that 
$\rho\mapsto \frac{\psi(\rho)}{W(\rho)(1-\rho^2)}$ is also smooth at $\rho=1$.
Consequently, the property $u_{1,0}\in H^2(\frac12,1)$ implies
\[ \int_0^1 \frac{\phi(s)}{W(s)(1-s^2)}sds=0 \]
but this is impossible since the integrand is positive on $(0,1)$.
This contradiction shows that we must have $n=1$ and thus, $\mb Q_0$ has rank $3$.
By the exact same argument one proves that $\mb P_0$ has rank $1$.
\end{proof}

\subsection{The spectrum of $\mb L_a$}
Now that we have understood the spectrum of $\mb L_0$, we turn to $\mb L_a$ for
$a\not= 0$.
We are only interested in small $a$ and thus, the problem can be treated perturbatively.
A first and crucial observation in this respect is the fact that
$\mb L'_a$ depends continuously on $a$.

\begin{lemma}
\label{lem:LipL'}
There exists a $\delta>0$ such that
\begin{align*} 
\|\mb L_a'-\mb L_b'\|&\lesssim |a-b| 
\end{align*}
for all $a,b\in \overline{\B^3_{\delta}}$.
\end{lemma}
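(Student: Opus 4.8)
The plan is to exploit that $\mb L'_a$ is, up to placing the result into the second component, simply multiplication by the potential
\[ V_a(\xi):=p\,\psi_{a,1}^{p-1}(\xi)=\tfrac{2p(p+1)}{(p-1)^2}\big[A_0(a)-A_j(a)\xi^j\big]^{-2}, \]
i.e.\ $\mb L'_a\mb u=\big(0,V_a u_1\big)$. Hence $(\mb L'_a-\mb L'_b)\mb u=\big(0,(V_a-V_b)u_1\big)$, and using the equivalence of $\|\cdot\|$ with $\|\cdot\|_{H^2\times H^1(\B^3)}$ from Lemma \ref{lem:ip}, the elementary Leibniz bound $\|fg\|_{H^1(\B^3)}\lesssim\|f\|_{C^1(\overline{\B^3})}\|g\|_{H^1(\B^3)}$, and the embedding $H^2(\B^3)\hookrightarrow H^1(\B^3)$, one gets
\[ \big\|(\mb L'_a-\mb L'_b)\mb u\big\|\lesssim\|(V_a-V_b)u_1\|_{H^1(\B^3)}\lesssim\|V_a-V_b\|_{C^1(\overline{\B^3})}\,\|u_1\|_{H^2(\B^3)}\lesssim\|V_a-V_b\|_{C^1(\overline{\B^3})}\,\|\mb u\|. \]
Taking the supremum over $\|\mb u\|\leq1$ reduces the statement to proving $\|V_a-V_b\|_{C^1(\overline{\B^3})}\lesssim|a-b|$ for $a,b\in\overline{\B^3_\delta}$.

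For this, first I would fix $\delta>0$ so small that $A_0(a)-A_j(a)\xi^j\geq\tfrac12$ for all $a\in\overline{\B^3_\delta}$ and all $\xi\in\overline{\B^3}$; this is possible since $A_0(a)=1+O(|a|^2)$ and $A_j(a)=O(|a|)$, so that $A_0(a)-A_j(a)\xi^j=1+O(|a|)$ uniformly in $\xi\in\overline{\B^3}$ (cf.~the smoothness remark in the proof of Lemma \ref{lem:compact}). On the compact set $\overline{\B^3_\delta}\times\overline{\B^3}$ the map $(c,\xi)\mapsto V_c(\xi)$ is then a composition of real-analytic maps with a non-vanishing denominator, hence $C^\infty$, and in particular all of its partial derivatives in $c$ and in $\xi$ up to second order are bounded uniformly on $\overline{\B^3_\delta}\times\overline{\B^3}$. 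From the identity
\[ V_a-V_b=\int_0^1\frac{d}{dt}\,V_{b+t(a-b)}\,dt, \]
valid in $C^1(\overline{\B^3})$, together with the chain-rule bound $\big\|\tfrac{d}{dt}V_{b+t(a-b)}\big\|_{C^1(\overline{\B^3})}\lesssim|a-b|$ uniform in $t\in[0,1]$ (the implicit constant controlled by the above uniform $C^2$-bounds on $V$), one then concludes $\|V_a-V_b\|_{C^1(\overline{\B^3})}\lesssim|a-b|$.

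Combining the two displays gives $\|\mb L'_a-\mb L'_b\|\lesssim|a-b|$ on $\overline{\B^3_\delta}$. There is no genuine obstacle here; the only point that deserves care is to choose $\delta$ at the outset so that the denominator $A_0(a)-A_j(a)\xi^j$ stays bounded away from $0$ on the relevant compact set, since this is precisely what makes $V_a$ smooth in $\xi$ and $C^1$ (hence Lipschitz) in $a$ with uniform constants. If one prefers to avoid the $C^1$-norm entirely, one may instead invoke the Sobolev module estimate $\|fg\|_{H^1(\B^3)}\lesssim\|f\|_{H^2(\B^3)}\|g\|_{H^1(\B^3)}$ and bound $\|V_a-V_b\|_{H^2(\B^3)}\lesssim|a-b|$ by the same segment argument.
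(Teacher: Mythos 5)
Your proof is correct and follows essentially the same route as the paper's: reduce $\mb L'_a-\mb L'_b$ to multiplication (in the second component) by the potential difference $V_a-V_b$, then apply the fundamental theorem of calculus along the segment from $b$ to $a$ together with uniform smoothness of $(c,\xi)\mapsto V_c(\xi)$ on $\overline{\B^3_\delta}\times\overline{\B^3}$ (which in turn rests on choosing $\delta$ small so the denominator $A_0(a)-A_j(a)\xi^j$ stays bounded away from zero). The paper phrases the FTC step as a pointwise representation involving $\varphi_{a,j}=p\,\partial_{a^j}\psi_{a,1}^{p-1}$; you phrase it as an integral identity in $C^1(\overline{\B^3})$, but these are the same argument.
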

 
 \begin{proof}
 Let $\mb u\in C^\infty(\overline{\B^3})^2$. The fundamental theorem of calculus yields the
pointwise representation
\[ \mb L'_b \mb u(\xi)-\mb L'_a \mb u(\xi)
=\left (\begin{array}{c}0 \\ u_1(\xi) \end{array} \right )(b^j-a^j)\int_0^1 \varphi_{a+s(b-a),j}(\xi)ds
 \]
 for $a,b \in\B^3_\delta$ 
 and $\varphi_{a,j}(\xi):=p\partial_{a^j}\psi_{a,1}^{p-1}(\xi)$,
provided $\delta>0$ is not too large.
The function $(\xi,a,b,s)\mapsto \varphi_{a+s(b-a),j}(\xi)$ belongs to 
$C^\infty(\overline{\B^3}\times \overline{\B^3_\delta}
\times \overline{\B^3_\delta} \times [0,1])$
 and the claimed Lipschitz bound for $\mb L'_a$ follows.
 \end{proof}
 
 As a corollary we infer that the spectrum of $\mb L_a$ in compact domains
 does not differ too much
 from the spectrum of $\mb L_0$.
 
 \begin{corollary}
 \label{cor:resLa}
 There exists a $\delta>0$ such that $\lambda\in \rho(\mb L_0)$ implies
 $\lambda\in \rho(\mb L_a)$ provided
$|a|\leq \delta \min \left \{1, \|\mb R_{\mb L_0}(\lambda)\|^{-1} \right \}$.
 \end{corollary}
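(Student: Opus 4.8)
The plan is to derive this from a Neumann-series argument for the resolvent. Since $\lambda \in \rho(\mb L_0)$, the bounded inverse $\mb R_{\mb L_0}(\lambda) = (\lambda - \mb L_0)^{-1}$ exists. The idea is to write
\[ \lambda - \mb L_a = \lambda - \mb L_0 - (\mb L_a' - \mb L_0') = \left[1 - (\mb L_a' - \mb L_0')\mb R_{\mb L_0}(\lambda)\right](\lambda - \mb L_0), \]
where I have used $\mb L_a = \mb L + \mb L_a'$ and $\mb L_0 = \mb L + \mb L_0'$, so that $\mb L_a - \mb L_0 = \mb L_a' - \mb L_0'$. The right-hand factor is boundedly invertible by assumption, so it suffices to show that the bracket $1 - (\mb L_a' - \mb L_0')\mb R_{\mb L_0}(\lambda)$ is invertible on $\mc H$.

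For the bracket, I would invoke the standard fact that $1 - \mb K$ is invertible whenever $\|\mb K\| < 1$, via the Neumann series. By Lemma \ref{lem:LipL'} there is a $\delta_0 > 0$ and a constant $C > 0$ (absorbed into $\lesssim$) such that $\|\mb L_a' - \mb L_0'\| \le C|a|$ for all $a \in \overline{\B^3_{\delta_0}}$. Hence
\[ \|(\mb L_a' - \mb L_0')\mb R_{\mb L_0}(\lambda)\| \le C|a|\,\|\mb R_{\mb L_0}(\lambda)\|. \]
If $|a| \le \delta \min\{1, \|\mb R_{\mb L_0}(\lambda)\|^{-1}\}$ with $\delta$ chosen small enough that $\delta \le \delta_0$ and $C\delta < 1$, then the bound $|a| \le \delta\|\mb R_{\mb L_0}(\lambda)\|^{-1}$ gives $\|(\mb L_a' - \mb L_0')\mb R_{\mb L_0}(\lambda)\| \le C\delta < 1$. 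Therefore the bracket is invertible, and composing the two inverses yields that $\lambda - \mb L_a$ has a bounded inverse on $\mc H$, i.e. $\lambda \in \rho(\mb L_a)$.

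There is really no serious obstacle here; the only points requiring a little care are: (i) ensuring the chosen $\delta$ simultaneously respects the smallness needed for Lemma \ref{lem:compact} (so that $\mb L_a$ is defined) and for Lemma \ref{lem:LipL'} (so the Lipschitz bound applies), which is why the statement allows us to shrink $\delta$; and (ii) noting that the factorization identity holds on the algebraic level because $\rg \mb R_{\mb L_0}(\lambda) = \mc D(\mb L_0) = \mc D(\mb L) = \mc D(\mb L_a)$, so that $\mb L_a$ may legitimately be applied after $\mb R_{\mb L_0}(\lambda)$. Both are routine, so the corollary follows immediately.
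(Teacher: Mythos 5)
Your proof is correct and uses exactly the same approach as the paper: the factorization $\lambda-\mb L_a=[1-(\mb L_a'-\mb L_0')\mb R_{\mb L_0}(\lambda)](\lambda-\mb L_0)$ (the paper writes $1+(\mb L_0'-\mb L_a')\mb R_{\mb L_0}(\lambda)$, which is the same operator), the Lipschitz bound from Lemma \ref{lem:LipL'}, and a Neumann series. The only addition is your explicit remark about the domains coinciding, which the paper leaves implicit.
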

 
 \begin{proof}
 Let $\delta_1>0$ be so small that $\mb L_a'$ is Lipschitz-continuous 
 for all $a\in \overline{\B^3_{\delta_1}}$ (Lemma \ref{lem:LipL'}).
 Now assume $\lambda\in \rho(\mb L_0)$.
 Then, for $a\in \overline{\B^3_{\delta_1}}$, we may use the identity 
 $\lambda-\mb L_a=[1+(\mb L_0'-\mb L_a')\mb R_{\mb L_0}(\lambda)](\lambda-\mb L_0)$
 to see that $\lambda-\mb L_a$ is bounded invertible precisely if
 $1+(\mb L_0'-\mb L_a')\mb R_{\mb L_0}(\lambda)$ is bounded invertible.
 By a Neumann series argument it follows that this is certainly the case if
 $\|\mb L_0'-\mb L_a'\|\|\mb R_{\mb L_0}(\lambda)\|<1$.
 From Lemma \ref{lem:LipL'} we know that there exists a constant $L>0$ such that
 $\|\mb L_0'-\mb L_a'\|\leq L|a|$.
 Consequently, if we choose $\delta=\min\{\delta_1,\frac{1}{2L}\}$, we see that
 $|a|\leq \delta \min\{1,\|\mb R_{\mb L_0}(\lambda)\|^{-1}\}$ implies $a\in \overline{\B^3_{\delta_1}}$ and 
 \[ \|\mb L_0'-\mb L_a'\|\|\mb R_{\mb L_0}(\lambda)\|\leq L|a|\|\mb R_{\mb L_0}(\lambda)\|
 <1 \] 
which yields $\lambda\in \rho(\mb L_a)$.
 \end{proof}

Now we define 
\[ \Omega_{\kappa_0,\omega_0}:=
\left \{z\in \C: \Re z \in [-\tfrac{3/2}{p-1}, \kappa_0], \Im z \in [-\omega_0,\omega_0]
\right \} \]
and 
\[ \Omega_{\kappa_0,\omega_0}':=\{z\in \C: 
\Re z\geq -\tfrac{3/2}{p-1}\} \backslash \Omega_{\kappa_0,\omega_0}. \]
The next estimate confines possible unstable eigenvalues of 
$\mb L_a$ to a compact domain $\Omega_{\kappa_0,\omega_0}$.

\begin{lemma}
\label{lem:Res}
There exist $\kappa_0,\omega_0,c,\delta>0$ such that $\Omega'_{\kappa_0,\omega_0}\subset 
\rho(\mb L_a)$ and we have
\[ \|\mb R_{\mb L_a}(\lambda)\|\leq c \]
for all $\lambda \in \Omega'_{\kappa_0,\omega_0}$ and all $a\in \overline{\B^3_\delta}$.
\end{lemma}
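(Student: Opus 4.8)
The plan is to reduce the statement to a uniform smallness estimate for $\mb L_a'\mb R_{\mb L}(\lambda)$ via the factorization $\lambda-\mb L_a=[1-\mb L_a'\mb R_{\mb L}(\lambda)](\lambda-\mb L)$, which is legitimate on $\{\Re z\ge-\tfrac{3/2}{p-1}\}$ since $\sigma(\mb L)\subset\{\Re z\le-\tfrac2{p-1}\}$ by Proposition \ref{prop:free}. That proposition also gives $\|\mb R_{\mb L}(\lambda)\|\le(\Re\lambda+\tfrac2{p-1})^{-1}$ for $\Re\lambda>-\tfrac2{p-1}$, hence $\|\mb R_{\mb L}(\lambda)\|\le 2(p-1)$ uniformly on $\{\Re z\ge-\tfrac{3/2}{p-1}\}\supset\Omega'_{\kappa_0,\omega_0}$. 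Therefore, once one has arranged $\|\mb L_a'\mb R_{\mb L}(\lambda)\|\le\tfrac12$ for all $\lambda\in\Omega'_{\kappa_0,\omega_0}$ and all $a\in\overline{\B^3_\delta}$, a Neumann series shows that $1-\mb L_a'\mb R_{\mb L}(\lambda)$ is invertible with inverse bounded by $2$, so $\lambda\in\rho(\mb L_a)$ and $\mb R_{\mb L_a}(\lambda)=\mb R_{\mb L}(\lambda)[1-\mb L_a'\mb R_{\mb L}(\lambda)]^{-1}$ is bounded by $c:=4(p-1)$.

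To produce this smallness I would split $\Omega'_{\kappa_0,\omega_0}$ into the half-plane $\{\Re z\ge\kappa_0\}$ and the half-strip $\{-\tfrac{3/2}{p-1}\le\Re z\le\kappa_0,\ |\Im z|\ge\omega_0\}$, whose union contains $\Omega'_{\kappa_0,\omega_0}$. On the half-plane, $\|\mb L_a'\mb R_{\mb L}(\lambda)\|\le C_0(\kappa_0+\tfrac2{p-1})^{-1}$ with $C_0:=\sup_{|a|\le\delta}\|\mb L_a'\|<\infty$ (finite by Lemma \ref{lem:LipL'}), so taking $\kappa_0$ large makes this $\le\tfrac12$. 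The half-strip is the substantive case, since $\|\mb R_{\mb L}(\lambda)\|$ does not decay as $|\Im\lambda|\to\infty$; here I would use that $\mb L_a'$ is compact (Lemma \ref{lem:compact}). Starting from the Laplace representation $\mb R_{\mb L}(\lambda)=\int_0^\infty e^{-\lambda\tau}\mb S(\tau)\,d\tau$ (valid for $\Re\lambda>-\tfrac2{p-1}$) and the fact that $\mb L^*$ generates $\mb S(\tau)^*$ with the same exponential bound, I would approximate the $\mb L_a'$ in operator norm by finite-rank operators --- uniformly in $a$, which is possible because $\{\mb L_a':|a|\le\delta\}$ is a compact (by Lemma \ref{lem:LipL'}) set of compact (by Lemma \ref{lem:compact}) operators --- and thereby reduce matters to showing $\|\mb R_{\mb L}(\lambda)^*\varphi\|\to 0$ as $|\Im\lambda|\to\infty$ for finitely many fixed $\varphi\in\mc H$. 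Since $\mb R_{\mb L}(\lambda)^*\varphi=\int_0^\infty e^{\I\Im\lambda\,\tau}\bigl(e^{-\Re\lambda\,\tau}\mb S(\tau)^*\varphi\bigr)\,d\tau$ and the integrand is dominated by $e^{-\frac{1/2}{p-1}\tau}\|\varphi\|$ uniformly for $\Re\lambda\ge-\tfrac{3/2}{p-1}$, the Bochner--Riemann--Lebesgue lemma gives the claim, and the convergence is uniform in $\Re\lambda$ over the compact interval $[-\tfrac{3/2}{p-1},\kappa_0]$ because $\Re\lambda\mapsto e^{-\Re\lambda\,\cdot}\mb S(\cdot)^*\varphi$ has relatively compact image in $L^1([0,\infty),\mc H)$. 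Consequently $\sup_{|a|\le\delta}\|\mb L_a'\mb R_{\mb L}(\lambda)\|\to0$ as $|\Im\lambda|\to\infty$ uniformly on the strip, so a suitable $\omega_0$ exists.

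Assembling the pieces: first fix $\kappa_0$ for the half-plane estimate, then $\omega_0$ for the half-strip, and finally shrink $\delta$ so that Lemma \ref{lem:LipL'} and the finite-rank approximations are uniform; the bound $\|\mb R_{\mb L_a}(\lambda)\|\le c=4(p-1)$ then follows as in the first paragraph. I expect the delicate point to be precisely the uniform-in-$(\Re\lambda,a)$ vanishing of $\|\mb L_a'\mb R_{\mb L}(\lambda)\|$ as $|\Im\lambda|\to\infty$: the bare resolvent $\mb R_{\mb L}(\lambda)$ has no decay along vertical lines, so the compactness of $\mb L_a'$ is indispensable, and one must take care to upgrade the mere strong smallness of $\mb R_{\mb L}(\lambda)$ (in $|\Im\lambda|$) to a genuine norm bound on the composition with $\mb L_a'$ --- which is exactly what the finite-rank reduction together with Riemann--Lebesgue accomplishes.
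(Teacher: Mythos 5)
Your argument is correct but follows a genuinely different route from the paper's, even though both begin with the identity $\lambda-\mb L_a=[1-\mb L_a'\mb R_{\mb L}(\lambda)](\lambda-\mb L)$ and the Neumann-series reduction. The paper does \emph{not} split $\Omega'_{\kappa_0,\omega_0}$ into a half-plane and a half-strip and does not invoke compactness or Riemann--Lebesgue; instead it obtains the quantitative decay $\|\mb L_a'\mb R_{\mb L}(\lambda)\|\lesssim|\lambda|^{-1}$ uniformly on $\Omega'_{\kappa_0,\omega_0}$ directly from the first component of the resolvent equation. Writing $\mb u=\mb R_{\mb L}(\lambda)\mb f$, the relation $\xi^j\partial_j u_1+(\lambda+\tfrac2{p-1})u_1-u_2=f_1$ can be solved for $u_1$, yielding $\|u_1\|_{H^1(\B^3)}\lesssim|\lambda|^{-1}(\|u_1\|_{H^2}+\|u_2\|_{H^1}+\|f_1\|_{H^1})\lesssim|\lambda|^{-1}\|\mb f\|$ via the uniform bound $\|\mb R_{\mb L}(\lambda)\|\le(\Re\lambda+\tfrac2{p-1})^{-1}\le 2(p-1)$; since $\mb L_a'$ is a multiplication in the first slot landing in $H^1$, this gives the decay at once. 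Your route, via uniform finite-rank approximation of the compact family $\{\mb L_a'\}$ and a Bochner Riemann--Lebesgue argument on the Laplace representation of $\mb R_{\mb L}(\lambda)^*$, is sound and is in fact the natural ``soft'' argument that would apply to any compact perturbation in a Hilbert space without exploiting the specific structure; what it trades away is the explicit $|\lambda|^{-1}$ rate and elementarity. It is also worth noting that the ultimate source of decay is the same in both proofs --- the loss of a derivative in $\mb L_a'$, i.e.\ the compact embedding $H^2\hookrightarrow H^1$ --- but the paper channels it through a one-line algebraic observation rather than through abstract operator-theoretic machinery.
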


\begin{proof}
Let $\delta>0$ be from Lemma \ref{lem:LipL'}.
For any $\kappa_0,\omega_0>0$, the condition $\lambda\in \Omega'_{\kappa_0,\omega_0}$
implies $\lambda \in \rho(\mb L)$.
Thus, for $a\in \overline{\B^3_\delta}$ we may use the identity 
$\lambda-\mb L_a=[1-\mb L_a' \mb R_{\mb L}(\lambda)](\lambda-\mb L)$
to relate $\mb R_{\mb L_a}(\lambda)$ to the free resolvent.
Note that $\lambda \in \Omega'_{\kappa_0,\omega_0}$ implies $|\lambda|$ large
if $\kappa_0$ and $\omega_0$ are large.
Consequently, in view of the Neumann series it suffices to prove that
\[ \|\mb L_a' \mb R_{\mb L}(\lambda)\|\lesssim |\lambda|^{-1} \]
for all $\lambda\in \Omega'_{10,10}$ and all $a\in \overline{\B^3_\delta}$.
Let $\mb f\in \mc H$ and set $\mb u=\mb R_{\mb L}(\lambda)\mb f$.
Then $\mb u\in \mc D(\mb L)$ and $(\lambda-\mb L)\mb u=\mb f$.
As in the proof of Lemma \ref{lem:dense} the latter equation implies
\[ \xi^j \partial_j u_1(\xi)+(\lambda+\tfrac{2}{p-1}) u_1(\xi)-u_2(\xi)=f_1(\xi) \]
which yields the bound
\[ \|u_1\|_{H^1(\B^3)}\lesssim \frac{1}{|\lambda|}\left (
\|u_1\|_{H^2(\B^3)}+\|u_2\|_{H^1(\B^3)}+\|f_1\|_{H^1(\B^3)} \right ). \]
By inserting the definition of $\mb u$ this yields
\begin{align*} 
\|[\mb R_{\mb L}(\lambda)\mb f]_1\|_{H^1(\B^3)}&\lesssim |\lambda|^{-1}\left (
\|\mb R_{\mb L}(\lambda)\mb f\|+\|\mb f\|\right ) \\
&\lesssim |\lambda|^{-1}\|\mb f\|
\end{align*}
for all $\lambda \in \Omega'_{10,10}$ since $\|\mb R_{\mb L}(\lambda)\|\leq \frac{1}{\Re\lambda
+\frac{2}{p-1}}$ by Proposition \ref{prop:free}.
Consequently, by the definition of $\mb L_a'$ we obtain
\[ \|\mb L_a' \mb R_{\mb L}(\lambda)\mb f\|\lesssim \|[\mb R_{\mb L}(\lambda)\mb f]_1
\|_{H^1(\B^3)}\lesssim |\lambda|^{-1}\|\mb f\| \]
for all $a\in \overline{\B^3_\delta}$ and all $\lambda\in \Omega'_{10,10}$.
\end{proof}

Based on the above, we can now completely describe the spectrum of $\mb L_a$ 
in the half-space $\{z\in \C: \Re z\geq -\frac{3/2}{p-1}\}$.

\begin{lemma}
\label{lem:speca}
Let $a\in \R^3$ be sufficiently small.
Then we have
\[ \sigma(\mb L_a)\subset \{z\in \C: \Re z<-\tfrac{3/2}{p-1}\}\cup \{0,1\} \]
and $\{0,1\}\subset \sigma_p(\mb L_a)$.
Furthermore, the eigenspaces associated to the eigenvalues $1$ and $0$ are spanned by
$\mb g_a$ and $\mb h_{a,j}$, respectively, where
\begin{align*} 
\mb g_a(\xi)&=\left ( \begin{array}{c}
A_0(a)[A_0(a)-A_j(a)\xi^j]^{-\frac{2}{p-1}-1} \\
\frac{p+1}{p-1}A_0(a)^2[A_0(a)-A_j(a)\xi^j]^{-\frac{2}{p-1}-2}\end{array} \right )\\
\mb h_{a,j}(\xi)&=\partial_{a^j}\Psi_a(\xi).
\end{align*}
Finally, the algebraic multiplicities of the eigenvalues $1$ and $0$ are equal to
$1$ and $3$, 
respectively.
\end{lemma}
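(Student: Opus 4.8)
The strategy is to transfer the complete description of $\sigma(\mb L_0)$ from Lemmas~\ref{lem:spec0} and \ref{lem:alg0} to $\mb L_a$ by a spectral perturbation argument, producing the eigenvalues $0$ and $1$ and their eigenfunctions by hand from the symmetries of Eq.~\eqref{eq:main}. First I would localize the spectrum. By Lemma~\ref{lem:Res} there are $\kappa_0,\omega_0,\delta>0$ (and we may take $\kappa_0\geq 1$) with $\Omega'_{\kappa_0,\omega_0}\subset\rho(\mb L_a)$ for all $|a|\leq\delta$. Since $-\tfrac{3/2}{p-1}>-\tfrac{2}{p-1}$, Lemma~\ref{lem:spec0} gives $\sigma(\mb L_0)\cap\{z\in\C:\Re z\geq-\tfrac{3/2}{p-1}\}=\{0,1\}$. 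Let $D_0,D_1$ be the open disks bounded by the curves $\gamma_0,\gamma_1$ introduced before Lemma~\ref{lem:alg0}; since $p>3$ these disks are disjoint and $\overline{D_0}\cup\overline{D_1}$ meets $\sigma(\mb L_0)$ only in $\{0,1\}$. The set $K:=\Omega_{\kappa_0,\omega_0}\setminus(D_0\cup D_1)$ is compact and contained in $\rho(\mb L_0)$, so $\|\mb R_{\mb L_0}(\cdot)\|$ is bounded on $K$; by Corollary~\ref{cor:resLa} together with the compactness of $K$ we get $K\subset\rho(\mb L_a)$ for all sufficiently small $a$. Combined with $\Omega'_{\kappa_0,\omega_0}\subset\rho(\mb L_a)$ this yields
\[ \sigma(\mb L_a)\cap\{z\in\C:\Re z\geq-\tfrac{3/2}{p-1}\}\subset D_0\cup D_1 \]
for all small $a$.

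Next I would count multiplicities via Riesz projections. For small $a$ the curves $\gamma_0,\gamma_1$ lie in $\rho(\mb L_a)$, so
\[ \mb P_a:=\frac{1}{2\pi\I}\int_{\gamma_1}\mb R_{\mb L_a}(z)\,dz,\qquad
\mb Q_a:=\frac{1}{2\pi\I}\int_{\gamma_0}\mb R_{\mb L_a}(z)\,dz \]
are well defined. From the identity $\mb R_{\mb L_a}(z)=\mb R_{\mb L_0}(z)[1+(\mb L_0'-\mb L_a')\mb R_{\mb L_0}(z)]^{-1}$ and the bound $\|\mb L_0'-\mb L_a'\|\lesssim|a|$ of Lemma~\ref{lem:LipL'}, a Neumann series argument shows $\mb R_{\mb L_a}(z)\to\mb R_{\mb L_0}(z)$ uniformly for $z\in\gamma_0\cup\gamma_1$ as $a\to0$, hence $\mb P_a\to\mb P_0$ and $\mb Q_a\to\mb Q_0$ in $\mc B(\mc H)$. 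Since the rank of a norm-continuous family of projections is locally constant, Lemma~\ref{lem:alg0} gives $\dim\rg\mb P_a=1$ and $\dim\rg\mb Q_a=3$ for all small $a$; that is, the total algebraic multiplicity of $\sigma(\mb L_a)$ inside $\gamma_1$ (resp.~$\gamma_0$) equals $1$ (resp.~$3$).

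It remains to exhibit $0$ and $1$ as eigenvalues with the stated eigenfunctions. The map $a\mapsto\Psi_a$ is smooth near $0$ with values in $C^3\times C^2(\overline{\B^3})\subset\mc D(\mb L)$, and $\mb L\Psi_a+\mb N(\Psi_a)=0$ for each such $a$; differentiating in $a^j$ and using that the linearization of $\mb N$ at $\Psi_a$ is $\mb L_a'$, we obtain $\mb L_a\mb h_{a,j}=(\mb L+\mb L_a')\partial_{a^j}\Psi_a=0$, so $0\in\sigma_p(\mb L_a)$ with eigenfunctions $\mb h_{a,j}$, which are linearly independent for small $a$ by continuity from $\mb h_{0,j}=\tfrac{2c_p}{p-1}(\xi_j,\tfrac{p+1}{p-1}\xi_j)$. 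For the eigenvalue $1$, I would transport the one-parameter family of solutions $s\mapsto u_{T,a}(\cdot+s,\cdot)$ of Eq.~\eqref{eq:main} to similarity coordinates; differentiating at $s=0$ produces a solution of the linearized flow of the form $\tau\mapsto e^\tau\mb g_a$ with $\mb g_a$ as in the statement, whence $\mb L_a\mb g_a=\mb g_a$ and $1\in\sigma_p(\mb L_a)$ (alternatively, one checks $\mb L_a\mb g_a=\mb g_a$ directly, as in the proof of Lemma~\ref{lem:spec0}). Both $\mb g_a$ and $\mb h_{a,j}$ belong to $C^\infty\times C^\infty(\overline{\B^3})$ for small $a$, so these are genuine eigenfunctions.

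Putting the three steps together: $\sigma(\mb L_a)\cap\{\Re z\geq-\tfrac{3/2}{p-1}\}\subset D_0\cup D_1$, the algebraic multiplicity of $\sigma(\mb L_a)\cap D_1$ is $1$, and $1$ is an eigenvalue, so $\sigma(\mb L_a)\cap D_1=\{1\}$, the eigenvalue $1$ is algebraically simple, and its eigenspace is $\langle\mb g_a\rangle$; similarly the algebraic multiplicity of $\sigma(\mb L_a)\cap D_0$ is $3$ while $0$ already has geometric multiplicity $\geq3$, so $\sigma(\mb L_a)\cap D_0=\{0\}$, the algebraic and geometric multiplicities of $0$ both equal $3$, and its eigenspace is $\langle\mb h_{a,1},\mb h_{a,2},\mb h_{a,3}\rangle$. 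This is precisely the assertion. The routine part is the spectral perturbation in the first two steps, which is essentially dictated by Lemmas~\ref{lem:spec0}--\ref{lem:Res}; the delicate point is the third step, verifying that the eigenvalues $0$ and $1$ genuinely persist rather than merely that spectrum accumulates near them — this is where the symmetries of Eq.~\eqref{eq:main} are indispensable, since they hand us explicit eigenfunctions whose multiplicities are then pinned down exactly by the robust rank count.
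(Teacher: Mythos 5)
Your proof is correct and follows essentially the same strategy as the paper: confine the spectrum via Lemma~\ref{lem:Res}, use the Lipschitz bound of Lemma~\ref{lem:LipL'} to get norm-continuity of Riesz projections and hence local constancy of their ranks from the $a=0$ case (Lemma~\ref{lem:alg0}), exhibit $\mb g_a$ and $\mb h_{a,j}$ as eigenfunctions via the symmetries, and conclude by a multiplicity count. The only organizational difference is that the paper integrates once over $\partial\Omega_{\kappa_0,\omega_0}$ to get a single rank-$4$ projection (which makes the separate step of ruling out spectrum in $K=\Omega_{\kappa_0,\omega_0}\setminus(D_0\cup D_1)$ unnecessary), whereas you use two Riesz projections over $\gamma_0$ and $\gamma_1$ and handle the complementary region by a compactness argument with Corollary~\ref{cor:resLa} — both routes are equally valid.
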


\begin{proof}
Let $\kappa_0, \omega_0>0$ be so large that $\overline{
\Omega_{\kappa_0,\omega_0}'}\subset \rho(\mb L_a)$
(Lemma \ref{lem:Res}).
Furthermore, set $M:=\max\{1,\sup_{z\in \partial\Omega_{\kappa_0,\omega_0}}
\|\mb R_{\mb L_0}(z)\|\}$.
By Corollary \ref{cor:resLa} there exists a $\delta>0$ such that
$\partial\Omega_{\kappa_0,\omega_0}\subset \rho(\mb L_a)$ for all $|a|\leq \frac{\delta}{M}$.
Now we define a projection $\mb P^\mathrm{tot}_a$ by
\[ \mb P_a^\mathrm{tot}=\frac{1}{2\pi \I}\int_{\partial\Omega_{\kappa_0,\omega_0}}
\mb R_{\mb L_a}(z)dz. \]
Note that 
$\mb R_{\mb L_a}(z)=\mb R_{\mb L}(z)[1-\mb L'_a\mb R_{\mb L}(z)]^{-1}$
and thus, the projection $\mb P_a^\mathrm{tot}$ depends continuously on $a$ 
by Lemma \ref{lem:LipL'}.
Lemma \ref{lem:alg0} shows that $\mb P_0^\mathrm{tot}$ has rank $4$ and
from \cite{Kat95}, p.~34, Lemma 4.10 
it follows that $\mb P_a^\mathrm{tot}$ has rank $4$ for all
small $a$.
On the other hand, a straightforward computation shows that $\mb g_a$ and $\mb h_{a,j}$ are 
eigenfunctions of $\mb L_a$ with eigenvalues $1$ and $0$, respectively.
The total geometric multiplicity of these eigenvalues equals $4$ and since $\mb P_a^\mathrm{tot}$
has rank $4$,
there can be no eigenvalues other than $1$ and $0$ in $\Omega_{\kappa_0,\omega_0}$.
In addition, the algebraic multiplicity of the eigenvalues $1$ and $0$ must be $1$ and $3$, 
respectively.
\end{proof}

\subsection{The linearized evolution}

The above spectral analysis leads to a sufficiently complete description of the linearized
evolution.

\begin{proposition}
\label{prop:linear}
Let $a=(a^1,a^2,a^3)\in \R^3$ be small.
Then there exist
rank-one projections $\mb P_{a}, \mb Q_{a,j} \in \mc B(\mc H)$, $j\in \{1,2,3\}$, 
satisfying 
\[ [\mb S_a(\tau),\mb P_{a}]=[\mb S_a(\tau),\mb Q_{a,j}]=\mb 0 \]
for all $\tau\geq 0$ such that
\begin{align*}
\mb S_a(\tau)\mb P_a&=e^\tau \mb P_a, \\
\mb S_a(\tau)\mb Q_{a,j}&=\mb Q_{a,j},\qquad j\in \{1,2,3\}, \\
\|\mb S_a(\tau)\tilde{\mb P}_a\mb f \|
&\lesssim e^{-\frac{4/3}{p-1}\tau}
\|\tilde{\mb P}_a\mb f\|
\end{align*}
for all $\tau\geq 0$, $\mb f\in \mc H$, where
$\tilde{\mb P}_a:=\mb I-\mb P_a-\sum_{j=1}^3 \mb Q_{a,j}$.
Furthermore, we have
\begin{align*} 
\rg \mb P_a&=\langle \mb g_a \rangle \\
\rg \mb Q_{a,j}&=\langle \mb h_{a,j} \rangle,\qquad j\in \{1,2,3\} 
\end{align*}
where $\mb g_a \in \mc H$ is an eigenfunction of $\mb L_a$ with eigenvalue $1$
and 
\[ \mb h_{a,j}(\xi):=\partial_{a^j}\Psi_a(\xi) \] 
are eigenfunctions of $\mb L_a$ with eigenvalues $0$.
Finally, $\mb Q_{a,j}\mb Q_{a,k}=\delta_{jk}\mb Q_{a,j}$ and 
$\mb Q_{a,j}\mb P_a=\mb P_a\mb Q_{a,j}=\mb 0$.
\end{proposition}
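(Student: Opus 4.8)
The plan is to build the spectral projections from the spectral analysis already in place, using the Riesz projection machinery and a careful decomposition according to the eigenvalues $1$ and $0$. First, by Lemma \ref{lem:speca} we know that for small $a$ the spectrum of $\mb L_a$ in the half-plane $\{\Re z \geq -\frac{3/2}{p-1}\}$ consists exactly of the two isolated eigenvalues $1$ and $0$, with algebraic multiplicities $1$ and $3$, respectively. I would define $\mb P_a$ as the Riesz projection associated to the eigenvalue $1$, i.e.\ $\mb P_a = \frac{1}{2\pi\I}\int_{\gamma_1}\mb R_{\mb L_a}(z)\,dz$ with $\gamma_1$ the small circle around $1$ used for $\mb L_0$; since the algebraic multiplicity is $1$, $\mb P_a$ has rank $1$, its range is spanned by the eigenfunction $\mb g_a$ from Lemma \ref{lem:speca}, and $\mb S_a(\tau)\mb P_a = e^\tau\mb P_a$ follows from the fact that $\mb L_a$ restricted to the one-dimensional range $\langle\mb g_a\rangle$ acts as multiplication by $1$ together with the standard relation $\mb S_a(\tau)\mb R_{\mb L_a}(z) = \mb R_{\mb L_a}(z)\mb S_a(\tau)$ and the spectral mapping for the point spectrum of a generator.

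Next I would handle the eigenvalue $0$. Let $\mb Q_a^{\mathrm{tot}} = \frac{1}{2\pi\I}\int_{\gamma_0}\mb R_{\mb L_a}(z)\,dz$ be the Riesz projection onto the (three-dimensional, by Lemma \ref{lem:speca}) generalized eigenspace at $0$; because the algebraic multiplicity equals the geometric multiplicity, this range is exactly $\langle \mb h_{a,1},\mb h_{a,2},\mb h_{a,3}\rangle$ and $\mb L_a$ annihilates it, so $\mb S_a(\tau)\mb Q_a^{\mathrm{tot}} = \mb Q_a^{\mathrm{tot}}$ for all $\tau \geq 0$. To split $\mb Q_a^{\mathrm{tot}}$ into the three commuting rank-one pieces $\mb Q_{a,j}$ with $\rg\mb Q_{a,j} = \langle\mb h_{a,j}\rangle$ and $\mb Q_{a,j}\mb Q_{a,k} = \delta_{jk}\mb Q_{a,j}$, I would pick a dual basis: since the $\mb h_{a,j}$ are linearly independent (the Jacobian $\partial_{a}\Psi_a|_{a=0}$ has rank $3$, which persists for small $a$), choose functionals $\Lambda_{a,k} \in \mc H^*$ with $\Lambda_{a,k}(\mb h_{a,j}) = \delta_{jk}$, and set $\mb Q_{a,j}\mb f := \Lambda_{a,j}\big((\mb Q_a^{\mathrm{tot}}\mb f)\text{-components}\big)\,\mb h_{a,j}$; more cleanly, compose $\mb Q_a^{\mathrm{tot}}$ with the coordinate projections onto $\langle\mb h_{a,j}\rangle$ relative to the decomposition $\rg\mb Q_a^{\mathrm{tot}} = \bigoplus_j\langle\mb h_{a,j}\rangle$. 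These commute with $\mb S_a(\tau)$ because $\mb S_a(\tau)$ acts as the identity on $\rg\mb Q_a^{\mathrm{tot}}$ and commutes with $\mb Q_a^{\mathrm{tot}}$, hence preserves the decomposition; and $\mb Q_{a,j}\mb P_a = \mb P_a\mb Q_{a,j} = \mb 0$ follows from the fact that Riesz projections for disjoint spectral sets are orthogonal, $\mb Q_a^{\mathrm{tot}}\mb P_a = \mb 0$.

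For the decay estimate on the stable part, set $\tilde{\mb P}_a = \mb I - \mb P_a - \mb Q_a^{\mathrm{tot}} = \mb I - \mb P_a - \sum_j\mb Q_{a,j}$, which is the Riesz projection onto the rest of the spectrum. On $\rg\tilde{\mb P}_a$ the generator $\mb L_a$ has spectrum contained in $\{\Re z < -\frac{3/2}{p-1}\} \subset \{\Re z \leq -\frac{4/3}{p-1} - \epsilon\}$ for some $\epsilon > 0$, so I would like to conclude $\|\mb S_a(\tau)\tilde{\mb P}_a\| \lesssim e^{-\frac{4/3}{p-1}\tau}$. The main obstacle, and the only genuinely non-formal step, is precisely this: the spectral bound does not by itself give a semigroup growth bound in a Banach space, so one needs more. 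The natural route is a Gearhart--Prüss / Datko-type argument: combine the uniform resolvent bound $\|\mb R_{\mb L_a}(\lambda)\| \leq c$ on $\Omega'_{\kappa_0,\omega_0}$ from Lemma \ref{lem:Res} (which after restricting to $\rg\tilde{\mb P}_a$ and shifting gives a uniform bound on the closed half-plane $\{\Re z \geq -\frac{4/3}{p-1}\}$ for the generator $\mb L_a + \tfrac{4/3}{p-1} - $ something) with the fact that $\mb S_a(\tau)\tilde{\mb P}_a = e^{(\ldots)}\mb S_{a}(\tau)\tilde{\mb P}_a$ is a bounded semigroup; one then invokes the standard result that a strongly continuous semigroup on a Hilbert space whose generator has resolvent uniformly bounded on a right half-plane $\{\Re z > \omega_0'\}$ satisfies $\|\mb S(\tau)\| \lesssim e^{\omega_0'\tau}$. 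Since $\mc H$ is a Hilbert space this Gearhart--Prüss theorem applies, and combined with the uniform-in-$a$ resolvent bound the implicit constant is uniform in small $a$. The remaining identities ($\rg\mb P_a = \langle\mb g_a\rangle$, the commutation relations) then follow from the standard functional calculus for isolated spectral points together with the explicit eigenfunctions exhibited in Lemma \ref{lem:speca}.
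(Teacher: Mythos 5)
Your proposal follows essentially the same route as the paper: you define the Riesz projections for the eigenvalues $1$ and $0$ via Lemma \ref{lem:speca}, split the rank-three projection into commuting rank-one pieces $\mb Q_{a,j}$ using the basis $\{\mb h_{a,j}\}$ of its range, derive the commutation with $\mb S_a(\tau)$ and the identities $\mb Q_{a,j}\mb Q_{a,k}=\delta_{jk}\mb Q_{a,j}$, $\mb P_a\mb Q_{a,j}=\mb 0$ from standard Riesz-projection properties, and invoke Gearhart--Pr\"u\ss{} (using the uniform resolvent bound from Lemma \ref{lem:Res} and the fact that $\mc H$ is a Hilbert space) for the exponential decay on $\rg\tilde{\mb P}_a$. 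Two small remarks. First, note that in the paper's displayed definition of $\mb P_a,\mb Q_a$ the contours appear to be assigned backwards (with $\gamma_0$ centered at $0$ and $\gamma_1$ at $1$, one would expect the eigenvalue-$1$ projection $\mb P_a$ to be built from $\gamma_1$ and the eigenvalue-$0$ projection from $\gamma_0$); your assignment is the one consistent with $\mb S_a(\tau)\mb P_a=e^\tau\mb P_a$ and $\rg\mb Q_a=\langle\mb h_{a,1},\mb h_{a,2},\mb h_{a,3}\rangle$, and matches the subsequent usage. Second, your observation that the uniform resolvent estimate must be extended from $\Omega'_{\kappa_0,\omega_0}$ to the full shifted half-plane by holomorphy and compactness of the excluded box after applying $\tilde{\mb P}_a$ is correct and is implicit in the paper as well.
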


\begin{proof}
By Lemma \ref{lem:speca} we may define the spectral projections $\mb P_a$
and $\mb Q_a$ by
\[ \mb P_a:=\frac{1}{2\pi \I}\int_{\gamma_0}\mb R_{\mb L_a}(z)dz,
\qquad \mb Q_a:=\frac{1}{2\pi \I}\int_{\gamma_1}\mb R_{\mb L_a}(z)dz \]
where $\gamma_0(s)=\frac{1}{p-1}e^{2\pi \I s}$ and $\gamma_1(s)=1+\frac12 e^{2\pi\I s}$ (this
is consistent with the earlier definition of $\mb P_0$ and $\mb Q_0$).
Note that we have $\mb P_a\mb Q_a=\mb Q_a\mb P_a=\mb 0$
and $[\mb S_a(\tau),\mb P_a]=[\mb S_a(\tau),\mb Q_a]=\mb 0$.
Since $\rg \mb Q_a=\langle \mb h_{a,1},\mb h_{a,2},\mb h_{a,3}\rangle$,
we may define $\mb Q_{a,j} \mb f:=\alpha_j \mb h_{a,j}$
where $\alpha_j\in \C$ is the unique expansion coefficient in
$\mb Q_a\mb f=\sum_{k=1}^3 \alpha_k \mb h_{a,k}$.
Then we have $\mb Q_{a,j}\mb Q_{a,k}=\delta_{jk}\mb Q_{a,j}$
and $\mb Q_{a,j}\mb P_a=\mb P_a\mb Q_{a,j}=\mb 0$.
Furthermore, we obtain 
\begin{align*} \mb Q_{a,j}\mb S_a(\tau)\mb f&=\mb Q_{a,j}\mb Q_a\mb S_a(\tau)\mb f
=\mb Q_{a,j}\mb S_a(\tau)\mb Q_a \mb f=\mb Q_{a,j}\mb Q_a \mb f=\mb Q_{a,j}\mb f \\
&=\mb S_a(\tau)\mb Q_{a,j}\mb f,
\end{align*}
i.e., $[\mb S_a(\tau),\mb Q_{a,j}]=\mb 0$.
Now we set $\mathbb{H}^+_p:=\{z\in \C: \Re z\geq -\tfrac{3/2}{p-1}\}$.
From Lemmas \ref{lem:speca} and \ref{lem:Res} we obtain
$\sup_{z\in \mathbb H_p^+}\|\mb R_{\mb L_a}(z)\tilde{\mb P}\|<\infty$
and thus, the Gearhart-Pr\"u\ss{} Theorem (\cite{EngNag00}, p.~302, Theorem 1.11) 
yields the stated bound for
$\mb S_a(\tau)\tilde {\mb P}$.
The remaining statements are a consequence of Lemma \ref{lem:speca}.
\end{proof}

\subsection{Lipschitz bounds}
Finally, to conclude the linear theory, we prove Lipschitz bounds for the semigroup
$\mb S_a(\tau)$ and the projections from Proposition \ref{prop:linear}.
These will be needed later for the main fixed point argument.

\begin{lemma}
\label{lem:Lip}
We have the bounds
\begin{align*}
\|\mb g_a-\mb g_b\|+\|\mb h_{a,j}-\mb h_{b,j}\|&\lesssim |a-b|,\qquad j\in \{1,2,3\} \\
\|\mb P_{a}-\mb P_{b}\|+\|\mb Q_a-\mb Q_b\|&\lesssim |a-b| \\
\|\mb S_a(\tau)\tilde{\mb P}_a-\mb S_b(\tau)\tilde{\mb P}_b\|
&\lesssim |a-b|  e^{-\frac{1}{p-1}\tau} 
\end{align*}
for all $a,b\in \R^3$ small and $\tau\geq 0$, where $\mb Q_a:=\sum_{j=1}^3 \mb Q_{a,j}$.
\end{lemma}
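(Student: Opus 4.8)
The plan is to establish the three estimates in turn, starting from the explicit formulas for the eigenfunctions and then bootstrapping to the projections and finally the semigroup. First I would handle $\|\mb g_a-\mb g_b\|+\|\mb h_{a,j}-\mb h_{b,j}\|\lesssim|a-b|$. Since $\mb g_a$ is given in closed form in Lemma~\ref{lem:speca} in terms of the smooth functions $A_0(a),A_j(a)$ and the denominator $A_0(a)-A_k(a)\xi^k$, which is bounded away from zero uniformly for $\xi\in\overline{\B^3}$ and $|a|$ small, the map $a\mapsto\mb g_a$ is smooth from a neighborhood of $0$ in $\R^3$ into $C^2\times C^1(\overline{\B^3})\hookrightarrow\mc H$; the mean value theorem then gives the Lipschitz bound. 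The same applies verbatim to $\mb h_{a,j}=\partial_{a^j}\Psi_a$, which is again a ratio of smooth functions with nonvanishing denominator, so $a\mapsto\mb h_{a,j}$ is $C^1$ into $\mc H$.

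Next I would prove $\|\mb P_a-\mb P_b\|+\|\mb Q_a-\mb Q_b\|\lesssim|a-b|$. Both projections are contour integrals $\frac{1}{2\pi\I}\int_{\gamma}\mb R_{\mb L_a}(z)\,dz$ over the fixed curves $\gamma_0,\gamma_1$ introduced before Lemma~\ref{lem:alg0}, which lie in $\rho(\mb L_a)$ uniformly for $|a|$ small by Lemma~\ref{lem:speca} and Corollary~\ref{cor:resLa}. Using the second resolvent-type identity $\mb R_{\mb L_a}(z)-\mb R_{\mb L_b}(z)=\mb R_{\mb L_a}(z)(\mb L_a'-\mb L_b')\mb R_{\mb L_b}(z)$ together with the uniform resolvent bound $\sup_{z\in\gamma_j}\|\mb R_{\mb L_a}(z)\|\lesssim 1$ (which follows from writing $\mb R_{\mb L_a}(z)=\mb R_{\mb L}(z)[1-\mb L_a'\mb R_{\mb L}(z)]^{-1}$ and a Neumann series, the inverse being uniformly bounded because $\mb L_a'\mb R_{\mb L}(z)$ is small after shrinking $\delta$, or simply by the continuity in $a$ from Lemma~\ref{lem:speca}) and the Lipschitz bound $\|\mb L_a'-\mb L_b'\|\lesssim|a-b|$ from Lemma~\ref{lem:LipL'}, I integrate over the (compact) curves to get the claim. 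The bound for $\mb Q_a=\sum_j\mb Q_{a,j}$ is the same since $\mb Q_a$ is exactly the Riesz projection for $\gamma_1$.

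The main obstacle is the third estimate, $\|\mb S_a(\tau)\tilde{\mb P}_a-\mb S_b(\tau)\tilde{\mb P}_b\|\lesssim|a-b|\,e^{-\frac{1}{p-1}\tau}$, since here one must control the difference of two semigroups over the unbounded stable subspace with a sharp exponential weight. The approach is to use a Duhamel/variation-of-constants comparison. Set $\mb v(\tau):=\mb S_a(\tau)\tilde{\mb P}_a\mb f-\mb S_b(\tau)\tilde{\mb P}_b\mb f$ and differentiate; using $\mb L_a=\mb L+\mb L_a'$ one gets $\partial_\tau\mb v=\mb L_a\mb v+(\mb L_a'-\mb L_b')\mb S_b(\tau)\tilde{\mb P}_b\mb f+\mb L_a[\tilde{\mb P}_a-\tilde{\mb P}_b]\mb f$. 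The last term can be rewritten using $\mb L_a\tilde{\mb P}_a=\tilde{\mb P}_a\mb L_a\tilde{\mb P}_a$ and the fact that $\tilde{\mb P}_a-\tilde{\mb P}_b=-(\mb P_a-\mb P_b)-(\mb Q_a-\mb Q_b)$ is bounded by $|a-b|$ with range in a fixed finite-dimensional space on which $\mb L_a$ acts boundedly with uniformly bounded norm; applying $\tilde{\mb P}_a$ at the end and invoking the decay $\|\mb S_a(\tau)\tilde{\mb P}_a\|\lesssim e^{-\frac{4/3}{p-1}\tau}$ from Proposition~\ref{prop:linear} (note $\tilde{\mb P}_a\mb S_a(\tau)\tilde{\mb P}_a=\mb S_a(\tau)\tilde{\mb P}_a$), the Duhamel formula
\begin{align*}
\mb v(\tau)=\mb S_a(\tau)\tilde{\mb P}_a[\tilde{\mb P}_a-\tilde{\mb P}_b]\mb f
+\int_0^\tau\mb S_a(\tau-\sigma)\tilde{\mb P}_a\Big((\mb L_a'-\mb L_b')\mb S_b(\sigma)\tilde{\mb P}_b\mb f\Big)d\sigma+\dots
\end{align*}
gives the bound after estimating $\|\mb S_a(\tau-\sigma)\tilde{\mb P}_a\|\lesssim e^{-\frac{4/3}{p-1}(\tau-\sigma)}$, $\|\mb L_a'-\mb L_b'\|\lesssim|a-b|$, $\|\mb S_b(\sigma)\tilde{\mb P}_b\|\lesssim e^{-\frac{4/3}{p-1}\sigma}$, so $\int_0^\tau e^{-\frac{4/3}{p-1}(\tau-\sigma)}e^{-\frac{4/3}{p-1}\sigma}d\sigma\lesssim\tau\,e^{-\frac{4/3}{p-1}\tau}\lesssim e^{-\frac{1}{p-1}\tau}$, the loss of $\tau$ being absorbed since $\frac{4/3}{p-1}>\frac{1}{p-1}$; one also needs to check that the terms involving $\tilde{\mb P}_a-\tilde{\mb P}_b$ composed with the unbounded parts stay in the stable subspace, which is where the commutation relations $[\mb S_a(\tau),\mb P_a]=[\mb S_a(\tau),\mb Q_{a,j}]=\mb 0$ from Proposition~\ref{prop:linear} are used. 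Care is needed to make all resolvent and semigroup bounds uniform in $a,b$ over a fixed small ball, but this is guaranteed by Lemma~\ref{lem:Res} and the continuity statements already established.
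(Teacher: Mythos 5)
Your treatment of the first two estimates matches the paper's: the eigenfunction bounds follow from the explicit formulas (in the paper via the fundamental theorem of calculus, in your write-up via the mean value theorem, same thing), and the projection bounds follow from the resolvent identity applied on the fixed contours $\gamma_0,\gamma_1$ with the Lipschitz bound of Lemma~\ref{lem:LipL'}. Those parts are fine.

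For the third estimate the overall idea — differentiate the difference and close a Duhamel argument — is the right one and is what the paper does, but your execution has two genuine problems. First, a computational slip: differentiating $\mb v(\tau)=\mb S_a(\tau)\tilde{\mb P}_a\mb f-\mb S_b(\tau)\tilde{\mb P}_b\mb f$ gives exactly $\partial_\tau\mb v=\mb L_a\mb v+(\mb L_a'-\mb L_b')\mb S_b(\tau)\tilde{\mb P}_b\mb f$; the extra term $\mb L_a[\tilde{\mb P}_a-\tilde{\mb P}_b]\mb f$ in your expansion should not be there (it only enters through the initial value $\mb v(0)=[\tilde{\mb P}_a-\tilde{\mb P}_b]\mb f$). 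Second and more importantly, you run Duhamel with $\mb L_a$ as the generator, which forces you to propagate with the full semigroup $\mb S_a$. The full semigroup contains the unstable modes $e^\tau\mb P_a$ and $\mb Q_a$, and both $[\tilde{\mb P}_a-\tilde{\mb P}_b]\mb f$ and $(\mb L_a'-\mb L_b')\mb S_b(\sigma)\tilde{\mb P}_b\mb f$ have nonzero components along $\rg\mb P_a$ (of size $\sim|a-b|$), so the resulting Duhamel formula contains terms growing like $e^\tau|a-b|\|\mb f\|$. These must cancel (we know $\mb v$ decays), but you do not exhibit the cancellation; "applying $\tilde{\mb P}_a$ at the end" only controls $\tilde{\mb P}_a\mb v(\tau)$, not $\mb v(\tau)$ itself, and you never estimate $(\mb I-\tilde{\mb P}_a)\mb v(\tau)$.

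The device the paper uses to circumvent this is the observation that one can take $\mb L_a\tilde{\mb P}_a$ — rather than $\mb L_a$ — as the generator for the homogeneous part, and that while $\mb L_a\tilde{\mb P}_a$ and $\mb L_b\tilde{\mb P}_b$ are each unbounded, their \emph{difference} is a bounded operator of norm $\lesssim|a-b|$: since $\mb L_a\mb P_a=\mb P_a$ and $\mb L_a\mb Q_a=\mb 0$ (Proposition~\ref{prop:linear}), one has $\mb L_a\tilde{\mb P}_a=\mb L_a-\mb P_a$, hence $\mb L_a\tilde{\mb P}_a-\mb L_b\tilde{\mb P}_b=\mb L_a'-\mb L_b'+\mb P_b-\mb P_a$, which by Lemma~\ref{lem:LipL'} and your second estimate is $O(|a-b|)$ in operator norm. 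With this algebraic identity the forcing term in Duhamel is directly bounded by $|a-b|$, and the propagator $\mb S_a(\tau)\tilde{\mb P}_a$ decays at rate $e^{-\frac{4/3}{p-1}\tau}$, so the $\tau$-loss from the convolution is absorbed exactly as you compute. This bounded-difference trick is the essential ingredient your proposal is missing, and without it (or an explicit demonstration of the cancellation of the $e^\tau$-terms) the argument does not close.
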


\begin{proof}
The bounds on $\mb g_a$ and $\mb h_{a,j}$ are a consequence of the fundamental
theorem of calculus, cf.~the proof of Lemma \ref{lem:LipL'}.
 
For the bounds on the projections we use the identity
\[ \mb A^{-1}-\mb B^{-1}=\mb B^{-1}(\mb B-\mb A)\mb A^{-1} \]
which is valid for all invertible operators $\mb A,\mb B$.
Consequently, for $\lambda \in \rho(\mb L_a)\cap \rho(\mb L_b)$ we infer
\begin{align*}
 \|\mb R_{\mb L_a}(\lambda)-\mb R_{\mb L_b}(\lambda)\|&\leq \|\mb R_{\mb L_a}(\lambda)\|
\|\mb R_{\mb L_b}(\lambda)\|\|\mb L_a -\mb L_b\| \\
&\lesssim \|\mb R_{\mb L_a}(\lambda)\|
\|\mb R_{\mb L_b}(\lambda)\||a-b|.
\end{align*}
This implies the stated Lipschitz-continuity for the projections $\mb P_a$ and $\mb Q_a$.

Finally, we prove the bound for the semigroup.
For $\mb u\in \mc D(\mb L)$ we have
\[ \partial_\tau \mb S_a(\tau)\tilde{\mb P}_a \mb u
=\mb L_a \mb S_a(\tau)\tilde{\mb P}_a \mb u
=\mb L_a\tilde{\mb P}_a \mb S_a(\tau)\tilde{\mb P}_a \mb u\]
and thus,
\begin{align*}
\partial_\tau &[\mb S_a(\tau)\tilde{\mb P}_a \mb u-\mb S_b(\tau)\tilde{\mb P}_b \mb u] \\
&=\mb L_a\tilde{\mb P}_a \mb S_a(\tau)\tilde{\mb P}_a \mb u
-\mb L_b\tilde{\mb P}_b \mb S_b(\tau)\tilde{\mb P}_b \mb u \\
&=\mb L_a \tilde{\mb P}_a [\mb S_a(\tau)\tilde{\mb P}_a \mb u-\mb S_b(\tau)\tilde{\mb P}_b \mb u]
+(\mb L_a \tilde{\mb P}_a-\mb L_b \tilde{\mb P}_b)\mb S_b(\tau)\tilde{\mb P}_b \mb u.
\end{align*}
Consequently, the function
\[ \Phi_{a,b}(\tau):=\frac{\mb S_a(\tau)\tilde{\mb P}_a \mb u-\mb S_b(\tau)\tilde{\mb P}_b \mb u}{|a-b|} \]
satisfies the inhomogeneous equation
\begin{equation}
\label{eq:Phiab} \partial_\tau \Phi_{a,b}(\tau)=\mb L_a \tilde{\mb P}_a \Phi_{a,b}(\tau)
+\frac{\mb L_a \tilde{\mb P}_a-\mb L_b \tilde{\mb P}_b}{|a-b|}
\mb S_b(\tau)\tilde{\mb P}_b \mb u 
\end{equation}
with data $\Phi_{a,b}(0)=\frac{\tilde{\mb P}_a-\tilde{\mb P}_b}{|a-b|}\mb u$.
The point now is that the apparently unbounded operator 
$\mb L_a \tilde{\mb P}_a-\mb L_b \tilde{\mb P}_b$ is in fact bounded.
Indeed, we have
\[ \mb L_a \tilde{\mb P}_a=\mb L_a (\mb I- \mb P_a-\mb Q_a)=\mb L_a-\mb P_a \]
since $\mb L_a \mb P_a=\mb P_a$ and $\mb L_a \mb Q_a=\mb 0$ by Proposition \ref{prop:linear}.
Thus, we infer
\[ \mb L_a \tilde{\mb P}_a-\mb L_b \tilde{\mb P}_b=\mb L_a'-\mb L_b'+\mb P_b-\mb P_a \]
and the Lipschitz-continuity of $\mb L'_a$ and $\mb P_a$ yields the bound 
\[ \left \|\frac{\mb L_a \tilde{\mb P}_a-\mb L_b \tilde{\mb P}_b}{|a-b|} \right\|\lesssim 1 \]
for all $a,b \in \R^3$ small.
Consequently, Duhamel's principle applied to Eq.~\eqref{eq:Phiab} implies
\begin{align*}
\Phi_{a,b}(\tau)&=\mb S_a(\tau) \tilde{\mb P}_a 
\frac{\tilde{\mb P}_a-\tilde{\mb P}_b}{|a-b|}\mb u \\
&\quad +\int_0^\tau \mb S_a(\tau-\sigma)\tilde{\mb P}_a
\frac{\mb L_a \tilde{\mb P}_a-\mb L_b \tilde{\mb P}_b}{|a-b|}
\mb S_b(\sigma)\tilde{\mb P}_b \mb u\, d\sigma
\end{align*}
and we obtain
\[ \|\Phi_{a,b}(\tau)\|\lesssim (1+\tau) e^{-\frac{4/3}{p-1}\tau}\|\mb u\|
\lesssim e^{-\frac{1}{p-1}\tau} \|\mb u\|. \]
By density, this estimate holds in fact for all $\mb u\in \mc H$ and the claimed
bound follows.
\end{proof}

\section{The nonlinear theory}

\subsection{Estimates for the nonlinearity}
We show that the nonlinearity $\mb N_a$ is Lipschitz-continuous with respect to both the
argument and the parameter $a$.
To this end, we first prove a simple auxiliary estimate.

\begin{lemma}
\label{lem:auxLip}
Set $H^2_R:=\{u\in H^2(\B^3): \|u\|_{H^2(\B^3)}\leq R\}$
and fix $f\in C^2(\R)$.
Then we have the bounds
\begin{align*} 
\|f\circ u-f\circ v\|_{L^\infty(\B^3)}&\leq C_R \|u-v\|_{L^\infty(\B^3)} \\
\|f\circ u-f\circ v\|_{H^1(\B^3)}&\leq C_R \|u-v\|_{H^2(\B^3)} 
\end{align*}
for all $u,v\in H^2_R$.
\end{lemma}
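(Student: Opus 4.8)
The plan is to reduce everything to the one-dimensional mean value theorem once one has $L^\infty$-control on the functions involved, which the Sobolev embedding supplies. First I would record that $H^2(\B^3)\hookrightarrow C(\overline{\B^3})$, so there is a constant $c>0$ with $\|w\|_{L^\infty(\B^3)}\le c\|w\|_{H^2(\B^3)}$ for all $w\in H^2(\B^3)$; in particular every $u\in H^2_R$ satisfies $\|u\|_{L^\infty(\B^3)}\le cR=:\rho$. Since $f\in C^2(\R)$, the quantities $M_1:=\sup_{|s|\le\rho}|f'(s)|$ and $M_2:=\sup_{|s|\le\rho}|f''(s)|$ are finite, and all constants below will be allowed to depend on $R$ and $f$.

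The first estimate is then immediate: for $u,v\in H^2_R$ and $\xi\in\B^3$ the values $u(\xi),v(\xi)$ lie in $[-\rho,\rho]$, so the mean value theorem gives $|f(u(\xi))-f(v(\xi))|\le M_1|u(\xi)-v(\xi)|$, and taking the supremum over $\xi$ yields the stated bound with $C_R=M_1$.

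For the second estimate I would first note that $f\circ u\in H^1(\B^3)$ with $\nabla(f\circ u)=(f'\circ u)\,\nabla u$: this is the chain rule for Sobolev functions, applicable here because $u\in H^1(\B^3)$ is bounded and $f$ is $C^1$, the right-hand side lying in $L^2$ since $|f'\circ u|\le M_1$ pointwise and $\nabla u\in L^2$; moreover $f\circ u$ is bounded, hence in $L^2(\B^3)$. Writing
\[ \nabla(f\circ u)-\nabla(f\circ v)=(f'\circ u)(\nabla u-\nabla v)+\big[(f'\circ u)-(f'\circ v)\big]\nabla v \]
I would estimate the first summand in $L^2(\B^3)$ by $\|f'\circ u\|_{L^\infty}\|\nabla u-\nabla v\|_{L^2}\le M_1\|u-v\|_{H^2(\B^3)}$, and the second by $\|(f'\circ u)-(f'\circ v)\|_{L^\infty}\|\nabla v\|_{L^2}$; here $\|\nabla v\|_{L^2}\le\|v\|_{H^2}\le R$, while the mean value theorem applied to $f'$ (this is where $f\in C^2$ enters) gives $\|(f'\circ u)-(f'\circ v)\|_{L^\infty}\le M_2\|u-v\|_{L^\infty}\lesssim\|u-v\|_{H^2(\B^3)}$. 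Finally the $L^2$-part of the $H^1$-norm satisfies $\|f\circ u-f\circ v\|_{L^2(\B^3)}\lesssim\|f\circ u-f\circ v\|_{L^\infty(\B^3)}\lesssim\|u-v\|_{H^2(\B^3)}$ by the first estimate and Sobolev embedding. Adding the contributions yields $\|f\circ u-f\circ v\|_{H^1(\B^3)}\le C_R\|u-v\|_{H^2(\B^3)}$.

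No step here is a genuine obstacle; the only points needing a word of care are the justification of the chain rule for the rough composition $f\circ u$ (handled by the boundedness of $u$) and the observation that $H^2$, rather than merely $H^1$, has to appear on the right-hand side. This is forced by the term $[(f'\circ u)-(f'\circ v)]\nabla v$, which requires an $L^\infty$-bound on $u-v$; in three space dimensions such a bound is available from $H^2$ but not from $H^1$.
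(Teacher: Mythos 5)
Your proof is correct and follows essentially the same route as the paper's: Sobolev embedding $H^2(\B^3)\hookrightarrow C(\overline{\B^3})$ to confine $u,v$ to a compact interval, then the $L^\infty$ bound via a difference-quotient estimate for $f$, the $L^2$ part from the $L^\infty$ part, and the gradient part via the chain rule and the same product decomposition, using the $C^2$ assumption to control $f'\circ u-f'\circ v$ in $L^\infty$. The only cosmetic difference is that you invoke the mean value theorem where the paper writes $f(y)-f(x)=(y-x)\int_0^1 f'(x+t(y-x))\,dt$ and bounds the integral factor on a compact set; these are interchangeable here.
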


\begin{proof}
We have
\begin{align*}
 f(y)-f(x)&=\int_x^y f'(t)dt=(y-x)\int_0^1 f'(x+t(y-x))dt \\
 &=(y-x)F_0(x,y)
 \end{align*}
 where $F_0\in C^1(\R^2)$.
 Replacing $f$ by $f'$ we also infer
 \[ f'(y)-f'(x)=(y-x)F_1(x,y) \]
 for a function $F_1 \in C(\R^2)$.
 By the Sobolev embedding $H^2(\B^3)\hookrightarrow  C(\overline{\B^3})$
 we obtain $\|u\|_{L^\infty(\B^3)}\leq C_R$ for all $u\in H^2_R$.
This implies $-C_R\leq u(\xi)\leq C_R$ for all $\xi \in \overline{\B^3}$
 and we obtain
 \begin{align*} 
 \|f\circ u-f\circ v\|_{L^\infty(\B^3)}&\leq \sup_{\xi\in \B^3}
 |F_0(u(\xi),v(\xi))|\|u-v\|_{L^\infty(\B^3)} \\
 &\leq \|F_0\|_{L^\infty(Q_R)}\|u-v\|_{L^\infty(\B^3)} \\
 &\lesssim C_R \|u-v\|_{L^\infty(\B^3)}
 \end{align*}
 for all $u,v \in H^2_R$, where $Q_R:=[-C_R,C_R]\times [-C_R,C_R]$.
From this estimate we immediately infer
\begin{align*}
 \|f\circ u-f\circ v\|_{L^2(\B^3)}&\lesssim \|f\circ u-f\circ v\|_{L^\infty(\B^3)}
 \lesssim C_R \|u-v\|_{L^\infty(\B^3)} \\
 &\lesssim C_R \|u-v\|_{H^2(\B^3)}.
 \end{align*}
 Similarly, for the derivative we obtain
 \begin{align*}
 \|\nabla (f\circ u-f\circ v)\|_{L^2(\B^3)} 
 &=\|(f'\circ u)\nabla u-(f'\circ v)\nabla v\|_{L^2(\B^3)} \\
 &\leq \|f'\circ u-f'\circ v\|_{L^\infty(\B^3)}\|\nabla u\|_{L^2(\B^3)} \\
 &\quad +\|f'\circ v\|_{L^\infty(\B^3)}\|\nabla u-\nabla v\|_{L^2(\B^3)} \\
 &\leq \|F_1\|_{L^\infty(Q_R)}
 \|u-v\|_{L^\infty(\B^3)}\|u\|_{H^1(\B^3)} \\
 &\quad +\|f'\|_{L^\infty(-C_R,C_R)}\|u-v\|_{H^1(\B^3)} \\
 &\lesssim C_R \|u-v\|_{H^2(\B^3)}.
 \end{align*}
\end{proof}

\begin{lemma}
\label{lem:N}
Let $\delta>0$ be sufficiently small.
Then we have the estimate
\[ \|\mb N_a(\mb u)-\mb N_b(\mb v)\|\lesssim 
(\|\mb u\|+\|\mb v\|)\|\mb u-\mb v\|+(\|\mb u\|^2+\|\mb v\|^2)|a-b| \]
for all $\mb u,\mb v\in \mc H$ with $\|\mb u\|+\|\mb v\|\leq \delta$ 
and all $a,b \in \overline{\B^3_\delta}$.
\end{lemma}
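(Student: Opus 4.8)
The plan is to reduce the Lipschitz estimate for $\mb N_a$ to the scalar estimates of Lemma~\ref{lem:auxLip}. Recall that $\mb N_a(\mb u)$ has vanishing first component, so only the second component matters, and by the definition
\[ \mb N_a(\mb u)=\mb N(\Psi_a+\mb u)-\mb N(\Psi_a)-\mb L'_a\mb u \]
the second component equals
\[ [\mb N_a(\mb u)]_2=F\big(\psi_{a,1}+u_1\big)-F(\psi_{a,1})-F'(\psi_{a,1})u_1, \]
where $F(y):=y|y|^{p-1}$. Since $p>3$, $F\in C^2(\R)$ and $F'(y)=p|y|^{p-1}$, $F''(y)=p(p-1)y|y|^{p-3}$ are continuous; moreover, for $|a|\le\delta$ small the profile $\psi_{a,1}\in C^\infty(\overline{\B^3})$ is uniformly bounded in $H^2(\B^3)$. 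Thus $\|\mb N_a(\mb u)-\mb N_b(\mb v)\|\simeq \|[\mb N_a(\mb u)]_2-[\mb N_b(\mb v)]_2\|_{H^1(\B^3)}$ by Lemma~\ref{lem:ip}.

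First I would write, using the fundamental theorem of calculus twice,
\[ F(w+h)-F(w)-F'(w)h=h^2\int_0^1\!\!\int_0^1 s\,F''\big(w+st\,h\big)\,dt\,ds=:h^2\,G(w,h), \]
with $G\in C(\R^2)$ (indeed $G$ inherits the regularity of $F''$, which is $C^0$, but one can check $G\in C^1$ is not needed). Applying this with $w=\psi_{a,1}$, $h=u_1$ gives $[\mb N_a(\mb u)]_2=u_1^2\,G(\psi_{a,1},u_1)$, and similarly for $b,v$. Then I would telescope the difference as
\[ [\mb N_a(\mb u)]_2-[\mb N_b(\mb v)]_2 = (u_1^2-v_1^2)G(\psi_{a,1},u_1) + v_1^2\big[G(\psi_{a,1},u_1)-G(\psi_{b,1},v_1)\big]. \]
For the $H^1$-norm of each piece I would use that $H^2(\B^3)\hookrightarrow L^\infty(\B^3)$ is an algebra-type embedding: $\|fg\|_{H^1}\lesssim \|f\|_{H^2}\|g\|_{H^1}+\|f\|_{L^\infty}\|g\|_{H^1}\lesssim \|f\|_{H^2}\|g\|_{H^1}$, together with the fact (contained in, or immediate from, the proof of Lemma~\ref{lem:auxLip}) that composition with a fixed $C^1$ function maps $H^2_R$-balls into bounded sets of $H^1$ and is Lipschitz from $H^2$ into $H^1$ and into $L^\infty$. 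Concretely, $\|u_1^2-v_1^2\|_{H^1}\lesssim (\|u_1\|_{H^2}+\|v_1\|_{H^2})\|u_1-v_1\|_{H^2}\lesssim (\|\mb u\|+\|\mb v\|)\|\mb u-\mb v\|$, and $\|G(\psi_{a,1},u_1)\|_{H^1}\lesssim 1$ uniformly on the relevant balls, giving the first term on the right-hand side of the claimed bound.

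The main obstacle is the second term, $v_1^2[G(\psi_{a,1},u_1)-G(\psi_{b,1},v_1)]$, since it must produce \emph{both} a factor $\|\mb u-\mb v\|$ and a factor $|a-b|$ with the correct quadratic weights. Here I would split $G(\psi_{a,1},u_1)-G(\psi_{b,1},v_1)$ into $[G(\psi_{a,1},u_1)-G(\psi_{a,1},v_1)]+[G(\psi_{a,1},v_1)-G(\psi_{b,1},v_1)]$. The first bracket is handled exactly like Lemma~\ref{lem:auxLip} (composition with the $C^1$ map $y\mapsto G(\psi_{a,1}(\cdot),y)$, whose $y$-derivative exists and is continuous because $F\in C^2$ forces $G$ to be $C^1$ in its second slot, with uniform bounds), yielding $\lesssim \|u_1-v_1\|_{H^2}$; multiplying by $\|v_1^2\|_{H^1}\lesssim\|\mb v\|^2$ gives a term $\|\mb v\|^2\|\mb u-\mb v\|\le(\|\mb u\|^2+\|\mb v\|^2)\|\mb u-\mb v\|$, absorbed into the first summand of the statement after noting $\|\mb v\|^2\le\delta\|\mb v\|\lesssim\|\mb v\|$ — or more cleanly, $\|\mb v\|^2\|\mb u-\mb v\|\le\delta(\|\mb u\|+\|\mb v\|)\|\mb u-\mb v\|$. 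The second bracket, $G(\psi_{a,1},v_1)-G(\psi_{b,1},v_1)$, is controlled via Lemma~\ref{lem:LipL'}-type reasoning: $\psi_{a,1}$ depends smoothly on $a$ (since $A_0(a),A_j(a)$ are smooth and $A_0(a)-A_j(a)\xi^j$ stays bounded away from $0$ on $\overline{\B^3}$ for small $a$), so $\|\psi_{a,1}-\psi_{b,1}\|_{H^2(\B^3)}\lesssim|a-b|$ by the fundamental theorem of calculus, and then the Lipschitz continuity of $G$ in its first argument (uniform over the relevant range) gives $\|G(\psi_{a,1},v_1)-G(\psi_{b,1},v_1)\|_{H^1}\lesssim|a-b|$; multiplying by $\|v_1^2\|_{H^1}\lesssim\|\mb v\|^2$ produces the term $\|\mb v\|^2|a-b|\le(\|\mb u\|^2+\|\mb v\|^2)|a-b|$. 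Collecting all contributions yields the claimed estimate. The only genuinely delicate point is checking that all the implicit constants — in the $H^2$-algebra estimates, in the composition bounds, and in the smoothness of $a\mapsto\psi_{a,1}$ — can be taken uniform over $\|\mb u\|+\|\mb v\|\le\delta$ and $a,b\in\overline{\B^3_\delta}$, which follows by choosing $\delta$ small enough that $\psi_{a,1}$ stays in a fixed compact subset of $C^2(\overline{\B^3})$ and the arguments $\psi_{a,1}+su_1$ stay in a fixed compact interval on which $F''$ is bounded.
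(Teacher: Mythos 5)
Your argument follows the paper's proof essentially step for step: you factor $F(w+h)-F(w)-F'(w)h=h^2 G(w,h)$ (the paper's $\tilde N$), exploit $H^2(\B^3)\hookrightarrow L^\infty(\B^3)$ and the algebra estimate, handle the $a$-dependence of $\psi_{a,1}$ by the fundamental theorem of calculus, and apply Lemma~\ref{lem:auxLip}, the only difference being that you telescope the product difference asymmetrically (and absorb $\|\mb v\|^2\|\mb u-\mb v\|$ using $\|\mb v\|\le\delta$) whereas the paper uses the symmetric identity $u^2G_u-v^2G_v=\tfrac12(u+v)(u-v)(G_u+G_v)+\tfrac12(u^2+v^2)(G_u-G_v)$. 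One small imprecision: your parenthetical that ``$F\in C^2$ forces $G$ to be $C^1$ in its second slot'' is not a valid implication in general (it would require $F'''$); the correct justification, which you in fact invoke at the end, is that $\psi_{a,1}+s u_1$ stays in a compact subinterval of $(0,\infty)$ where $F$ is $C^\infty$ --- the paper handles this by explicitly restricting $\tilde N$ to the domain $\Omega_q=(q,\infty)\times(-q,\infty)$.
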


\begin{proof}
Recall that $\mb N_a(\mb u)=\mb N(\Psi_a+\mb u)-\mb N(\Psi_a)-\mb L_a' \mb u$ where
\[ \mb N(\mb v)=\left (\begin{array}{c}0 \\ v_1 |v_1|^{p-1}\end{array} \right ),\qquad
\mb L_a' \mb u=\left (\begin{array}{c}0 \\ p\psi_{a,1}^{p-1}u_1 \end{array} \right ).
\]
Thus,
\[ \mb N_a(\mb u)(\xi)=\left ( \begin{array}{c} 0 \\ 
N(\psi_{a,1}(\xi),u_1(\xi)) \end{array} \right ) \]
where 
\begin{align*} N(x_0,x):&=(x_0+x)|x_0+x|^{p-1}-x_0|x_0|^{p-1}-p|x_0|^{p-1}x \\
&=F(x_0+x)-F(x_0)-F'(x_0)x,\qquad F(y):=y|y|^{p-1}.
\end{align*}
The function $F$ is smooth on $(0,\infty)$. Consequently,
$N$ is smooth on $\Omega_q:=(q,\infty)\times (-q,\infty)$ for any $q>0$. 
Thus, for $(x_0,x)\in \Omega_q$, we may write
\begin{align*}
N(x_0,x)&=\int_{x_0}^{x_0+x}F'(t)dt-F'(x_0)x=x\int_0^1 [F'(x_0+tx)-F'(x_0)]dt \\
&=x\int_0^1 \int_{x_0}^{x_0+tx}F''(s)dsdt =x^2\int_0^1 \int_0^1 t F''(x_0+stx)dsdt \\
&=:x^2\tilde N(x_0,x)
\end{align*}
with $\tilde N$ smooth on $\Omega_q$.
Since $\delta>0$ is assumed to be small, we have 
\[ \psi_{a,1}(\xi)=c_p[A_0(a)-A_j(a)\xi^j]^{-\frac{2}{p-1}}\geq q \] 
for all
$|a|\leq \delta$, $\xi \in \overline{\B^3}$, and a suitable $q>0$
(recall that $A_0(a)=1+O(|a|)$ and $A_j(a)=O(|a|)$ for $|a|\lesssim 1$).
Consequently, the Sobolev embedding $H^2(\B^3)\hookrightarrow C(\overline{\B^3})$
implies $(\psi_{a,1}(\xi),u_1(\xi))\in \Omega_q$ for all $\xi\in \overline{\B^3}$ and
all $\mb u$ with $\|\mb u\|\leq \delta$.
Thus, we obtain
\[ \mb N_a(\mb u)(\xi)=\left ( \begin{array}{c} 0 \\
u_1(\xi)^2 \tilde N(\psi_{a,1}(\xi),u_1(\xi)) \end{array} \right ). \]
By using the 
pointwise identity
\begin{align*} u_1^2 &\tilde N(\psi_{a,1},u_1)-v_1^2 \tilde N(\psi_{a,1},v_1) \\
&=\tfrac12 (u_1+v_1)(u_1-v_1)[\tilde N(\psi_{a,1},u_1)+\tilde N(\psi_{a,1},v_1)]\\
&\quad +\tfrac12 (u_1^2+v_1^2)[\tilde N(\psi_{a,1},u_1)-\tilde N(\psi_{a,1},v_1)] 
\end{align*}
and Lemma \ref{lem:auxLip}, 
we obtain
\begin{align*}
\|u_1^2 &\tilde N(\psi_{a,1},u_1)-v_1^2 \tilde N(\psi_{a,1},v_1)\|_{L^2(\B^3)} \\
&\lesssim (\|u_1\|_{L^\infty(\B^3)}+\|v_1\|_{L^\infty(\B^3)})\|u_1-v_1\|_{L^2(\B^3)} \\
&\quad +(\|u_1\|_{L^\infty(\B^3)}^2+\|v_1\|_{L^\infty(\B^3)}^2)\|
\tilde N(\psi_{a,1},u_1)-\tilde N(\psi_{a,1},v_1)\|_{L^2(\B^3)} \\
&\lesssim (\|u_1\|_{H^2(\B^3)}+\|v_1\|_{H^2(\B^3)})\|u_1-v_1\|_{H^2(\B^3)} \\
&\lesssim (\|\mb u\|+\|\mb v\|)\|\mb u-\mb v\|.
\end{align*}
Furthermore, in order to estimate 
\[\|\nabla[u_1^2\tilde N(\psi_{a,1},u_1)
-v_1^2 \tilde N(\psi_{a,1},v_1)]\|_{L^2(\B^3)},\]
we use
\begin{align*}
\partial_{\xi^j} \tilde N(\psi_{a,1}(\xi),u_1(\xi))
&=\partial_1 \tilde N(\psi_{a,1}(\xi),u_1(\xi))\partial_j \psi_{a,1}(\xi) \\
&\quad +\partial_2 \tilde N(\psi_{a,1}(\xi),u_1(\xi))\partial_j u_1(\xi)
\end{align*}
and
\begin{align*}
 \|&\partial_2 \tilde N(\psi_{a,1},u_1)\nabla u_1
 -\partial_2 \tilde N(\psi_{a,1},v_1)\nabla v_1\|_{L^2(\B^3)} \\
 &\lesssim \|\partial_2\tilde N(\psi_{a,1},u_1)\|_{L^\infty(\B^3)}\|\nabla u_1
 -\nabla v_1\|_{L^2(\B^3)} \\
 &\quad +\|\partial_2 \tilde N(\psi_{a,1},u_1)-\partial_2 
 \tilde N(\psi_{a,1},v_1)\|_{L^\infty(\B^3)}
 \|\nabla v_1\|_{L^2(\B^3)} \\
 &\lesssim \|u_1-v_1\|_{H^2(\B^3)}
 \end{align*}
 which yields
 \begin{align*}
 \|\nabla[\tilde N(\psi_{a,1},u_1)-\tilde N(\psi_{a,1},v_1)]\|_{L^2(\B^3)}\lesssim 
 \|u_1-v_1\|_{H^2(\B^3)}.
 \end{align*}
 Now one uses the simple product estimate 
 \[ \|\nabla(fg)\|_{L^2(\B^3)}\lesssim
 \|\nabla f\|_{L^2(\B^3)}\|g\|_{L^\infty(\B^3)}+\|f\|_{L^\infty(\B^3)}\|
 \nabla g\|_{L^2(\B^3)} \]
 combined with the 
 Sobolev embedding $H^2(\B^3)\hookrightarrow L^\infty(\B^3)$ to conclude that
 \begin{align*}
  \|\nabla &[u_1^2 \tilde N(\psi_{a,1},u_1)-v_1^2 \tilde N(\psi_{a,1},v_1)]\|_{L^2(\B^3)} \\
 &\lesssim \big (\|u_1\|_{H^2(\B^3)}+\|v_1\|_{H^2(\B^3)}\big )\|u_1-v_1\|_{H^2(\B^3)} 
 \end{align*}
which yields
 \begin{align*}
\|\mb N_a(\mb u)-\mb N_a(\mb v)\|&=\|u_1^2 \tilde N(\psi_{a,1},u_1)
-v_1^2 \tilde N(\psi_{a,1},v_1)\|_{H^1(\B^3)} \\
 &\lesssim (\|\mb u\|+\|\mb v\|)\|\mb u-\mb v\|. 
 \end{align*}
 In order to complete the proof, it suffices to show that
 \[ \|\mb N_a(\mb u)-\mb N_b(\mb u)\|\lesssim \|\mb u\|^2|a-b|. \]
 To this end, we use
 \begin{align*}
 \tilde N(&\psi_{b,1}(\xi),u_1(\xi))-\tilde N(\psi_{a,1}(\xi),u_1(\xi)) \\
 &=\int_0^1 \partial_s \tilde N(\psi_{a+s(b-a),1}(\xi),u_1(\xi))ds \\
 &=(b^j-a^j)\int_0^1 \partial_1 \tilde N(\psi_{a+s(b-a),1}(\xi),u_1(\xi))\varphi_{a+s(b-a),j}(\xi)
 ds
 \end{align*}
 where $\varphi_{a,j}(\xi):=\partial_{a^j}\psi_{a,1}(\xi)$.
 From this representation and the Sobolev embedding $H^2(\B^3)
 \hookrightarrow L^\infty(\B^3)$ it follows
 that
 \begin{align*}
 \|\mb N_a(\mb u)-\mb N_b(\mb u)\|&=\|u_1^2[\tilde N(\psi_{b,1},u_1)
 -\tilde N(\psi_{a,1},u_1)]\|_{H^1(\B^3)} \\
 &\lesssim \|u_1\|_{H^2(\B^3)}^2|a-b| \\
 &\leq \|\mb u\|^2|a-b|
 \end{align*}
 which finishes the proof.
\end{proof}

\subsection{More estimates}
We apply Duhamel's principle to rewrite Eq.~\eqref{eq:Phi} in weak form as an integral equation.
For initial data $\Phi(0)=\mb u \in \mc H$ this yields
\begin{align} 
\label{eq:Phiweak}
\Phi(\tau)=&\mb S_{a_\infty}(\tau)\mb u \nonumber \\
&+\int_0^\tau \mb S_{a_\infty}(\tau-\sigma) 
\Big [\hat{\mb L}_{a(\sigma)}\Phi(\sigma)+\mb N_{a(\sigma)}(\Phi(\sigma)) 
-\partial_\sigma \Psi_{a(\sigma)} \Big ]d\sigma 
\end{align}
where we use the abbreviation $\hat{\mb L}_{a(\sigma)}:=\mb L'_{a(\sigma)}-\mb L'_{a_\infty}$.
As a preparation we derive some basic estimates for the terms involved
in Eq.~\eqref{eq:Phiweak}.
First, we introduce the following Banach spaces.
\begin{definition}
We set $\mc X:=\{\Phi \in C([0,\infty),\mc H): \|\Phi\|_{\mc X}<\infty\}$
where 
\[ \|\Phi\|_{\mc X}:=\sup_{\tau>0}[e^{\omega_p\tau}\|\Phi(\tau)\|],\qquad 
\omega_p:=\tfrac{1}{p-1}, \]
and $X:=\{a \in C^1([0,\infty),\R^3): a(0)=0, \|a\|_X<\infty\}$
where
\[ \|a\|_X:=\sup_{\tau>0}[e^{\omega_p\tau}|\dot a(\tau)|+|a(\tau)|]. \]
For $\delta>0$ we also define the closed subsets $\mc X_\delta \subset \mc X$ and
$X_\delta \subset X$ by $\mc X_\delta:=\{\Phi\in \mc X: \|\Phi\|_{\mc X}\leq \delta\}$
and $X_\delta:=\{a\in X: |\dot a(\tau)|\leq \delta e^{-\omega_p\tau}\}$.
\end{definition}

\begin{lemma}
\label{lem:est1}
Let $\delta>0$ be sufficiently small and suppose
$\Phi \in \mc X_\delta$ and $a\in X_\delta$.
Then we have the bounds
\begin{align*}
\|\hat{\mb L}_{a(\tau)}\Phi(\tau)\|+\|\mb N_{a(\tau)}(\Phi(\tau))\|
&\lesssim \delta^2 e^{-2\omega_p\tau} \\
\|\mb P_{a_\infty}\partial_\tau \Psi_{a(\tau)}\|+\|(\mb I-\mb Q_{a_\infty})
\partial_\tau \Psi_{a(\tau)}\|&\lesssim \delta^2 e^{-2\omega_p\tau}
\end{align*} 
for all $\tau\geq 0$.
\end{lemma}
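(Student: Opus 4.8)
The plan is to dispatch the four terms one at a time, in each case reducing to a Lipschitz estimate already available from Lemmas \ref{lem:LipL'}, \ref{lem:Lip} and \ref{lem:N}. The preliminary observation is that $a\in X_\delta$ gives $|\dot a(\tau)|\le \delta e^{-\omega_p\tau}$, so $a(\tau)=\int_0^\tau \dot a(\sigma)d\sigma$ converges as $\tau\to\infty$ and, crucially,
\[ |a(\tau)-a_\infty|\le \int_\tau^\infty |\dot a(\sigma)|d\sigma\le \tfrac{\delta}{\omega_p}e^{-\omega_p\tau}; \]
in particular, for $\delta$ small both $a(\tau)$ and $a_\infty$ lie in a small ball, so all the perturbative lemmas of Sections 3 and 4 apply.

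For the first pair of terms this is immediate. Since $\hat{\mb L}_{a(\tau)}=\mb L'_{a(\tau)}-\mb L'_{a_\infty}$, Lemma \ref{lem:LipL'} gives $\|\hat{\mb L}_{a(\tau)}\|\lesssim |a(\tau)-a_\infty|\lesssim \delta e^{-\omega_p\tau}$, and since $\Phi\in\mc X_\delta$ means $\|\Phi(\tau)\|\le \delta e^{-\omega_p\tau}$, we conclude $\|\hat{\mb L}_{a(\tau)}\Phi(\tau)\|\lesssim \delta^2 e^{-2\omega_p\tau}$. For the nonlinearity, note that $\mb N_a(0)=\mb 0$ by construction, so Lemma \ref{lem:N} with $\mb v=0$ and $b=a=a(\tau)$ yields $\|\mb N_{a(\tau)}(\Phi(\tau))\|\lesssim \|\Phi(\tau)\|^2\lesssim \delta^2 e^{-2\omega_p\tau}$.

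The remaining term $\partial_\tau\Psi_{a(\tau)}$ is where the actual idea lies, and I expect it to be the only nonroutine point. By the chain rule $\partial_\tau\Psi_{a(\tau)}=\sum_{j=1}^3 \dot a^j(\tau)\,\mb h_{a(\tau),j}$, and a naive estimate gives only $\|\partial_\tau\Psi_{a(\tau)}\|\lesssim |\dot a(\tau)|\lesssim \delta e^{-\omega_p\tau}$, which is short by one factor of $\delta e^{-\omega_p\tau}$. The gain comes from the fact that $\mb h_{a_\infty,j}\in\rg\mb Q_{a_\infty}$, being an eigenfunction of $\mb L_{a_\infty}$ with eigenvalue $0$ (Proposition \ref{prop:linear}); hence $(\mb I-\mb Q_{a_\infty})\mb h_{a_\infty,j}=\mb 0$ and, using $\mb P_{a_\infty}\mb Q_{a_\infty}=\mb 0$, also $\mb P_{a_\infty}\mb h_{a_\infty,j}=\mb P_{a_\infty}\mb Q_{a_\infty}\mb h_{a_\infty,j}=\mb 0$. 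Therefore
\[ \mb P_{a_\infty}\partial_\tau\Psi_{a(\tau)}=\sum_{j=1}^3 \dot a^j(\tau)\,\mb P_{a_\infty}(\mb h_{a(\tau),j}-\mb h_{a_\infty,j}), \]
and likewise with $\mb I-\mb Q_{a_\infty}$ in place of $\mb P_{a_\infty}$. The Lipschitz bound $\|\mb h_{a(\tau),j}-\mb h_{a_\infty,j}\|\lesssim |a(\tau)-a_\infty|\lesssim \delta e^{-\omega_p\tau}$ from Lemma \ref{lem:Lip}, combined with the boundedness of $\mb P_{a_\infty}$ and $\mb Q_{a_\infty}$, then gives both $\|\mb P_{a_\infty}\partial_\tau\Psi_{a(\tau)}\|$ and $\|(\mb I-\mb Q_{a_\infty})\partial_\tau\Psi_{a(\tau)}\|\lesssim |\dot a(\tau)|\,\delta e^{-\omega_p\tau}\lesssim \delta^2 e^{-2\omega_p\tau}$. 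The essential point — and the thing to get right — is to project first and exploit that $\partial_\tau\Psi_{a(\tau)}$ sits, up to an $O(\delta e^{-\omega_p\tau})$ error, in the common null space of $\mb P_{a_\infty}$ and $\mb I-\mb Q_{a_\infty}$; everything else is a direct appeal to the earlier Lipschitz lemmas.
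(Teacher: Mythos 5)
Your proof is correct and follows essentially the same route as the paper: establish $|a(\tau)-a_\infty|\lesssim\delta e^{-\omega_p\tau}$, estimate $\hat{\mb L}_{a(\tau)}\Phi(\tau)$ via the Lipschitz bound on $a\mapsto\mb L_a'$, estimate $\mb N_{a(\tau)}(\Phi(\tau))$ via Lemma \ref{lem:N}, and for $\partial_\tau\Psi_{a(\tau)}=\dot a^k(\tau)\,\mb h_{a(\tau),k}$ subtract off $\mb h_{a_\infty,k}$ using $\mb P_{a_\infty}\mb h_{a_\infty,k}=\mb 0$ and $(\mb I-\mb Q_{a_\infty})\mb h_{a_\infty,k}=\mb 0$ together with Lemma \ref{lem:Lip}. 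The only (cosmetic) difference is that you spell out why $\mb P_{a_\infty}\mb h_{a_\infty,j}=\mb 0$ via $\mb P_{a_\infty}\mb Q_{a_\infty}=\mb 0$, whereas the paper cites Proposition \ref{prop:linear} directly.
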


\begin{proof}
First note that
\[ |a(\tau_2)-a(\tau_1)|\leq \int_{\tau_1}^{\tau_2} |\dot a(\sigma)|d\sigma 
\lesssim \delta (e^{-\omega_p\tau_1}+e^{-\omega_p \tau_2})\to 0 \]
as $\tau_1,\tau_2 \to \infty$. Consequently, $a_\infty=\lim_{\tau\to\infty} a(\tau)$ 
exists and we have 
\[ |a_\infty-a(\tau)|\leq \int_\tau^\infty |\dot a(\sigma)|d\sigma\lesssim \delta e^{-\omega_p \tau}. \]
Thus, by Lemma \ref{lem:Lip} we obtain
\[
\|\hat{\mb L}_{a(\tau)}\Phi(\tau)\|\leq
\|\mb L_{a(\tau)}'-\mb L_{a_\infty}'\|\|\Phi(\tau)\|
\lesssim \delta e^{-\omega_p\tau}|a(\tau)-a_\infty|\lesssim \delta^2 e^{-2\omega_p\tau}.
\]
Furthermore, we have
\[ \|\mb N_{a(\tau)}(\Phi(\tau))\|\lesssim \|\Phi(\tau)\|^2\lesssim \delta^2 e^{-2\omega_p\tau} \]
by Lemma \ref{lem:N}.
Finally, we note that
\[ \partial_\tau \Psi_{a(\tau)}=\dot a^k(\tau)\mb h_{a(\tau),k}=\dot a^k(\tau)
\mb h_{a_\infty,k}+\dot a^k(\tau)\hat{\mb h}_{a(\tau),k}, \]
where we set 
$\hat {\mb h}_{a(\tau),k}:=\mb h_{a(\tau),k}-\mb h_{a_\infty,k}$.
Since $\mb P_{a_\infty}\mb h_{a_\infty,k}=\mb 0$ by Proposition \ref{prop:linear}, we find
\[ \|\mb P_{a_\infty}\partial_\tau \Psi_{a(\tau)}\|\lesssim |\dot a^k(\tau)|
\|\hat{\mb h}_{a(\tau),k}\|\lesssim \delta e^{-\omega_p\tau}|a(\tau)-a_\infty|
\lesssim \delta^2 e^{-2\omega_p\tau} \]
by Lemma \ref{lem:Lip}.
Since $(\mb I-\mb Q_{a_\infty})\mb h_{a_\infty,k}=\mb 0$, the last estimate follows as well.
\end{proof}

We also prove corresponding Lipschitz-bounds.
\begin{lemma}
\label{lem:est2}
Let $\delta>0$ be sufficiently small.
Then we have the bounds
\begin{align*}
\|\hat{\mb L}_{a(\tau)}\Phi(\tau)-\hat{\mb L}_{b(\tau)}\Psi(\tau)\|
&\lesssim \delta e^{-2\omega_p\tau}\big (\|\Phi-\Psi\|_{\mc X}+\|a-b\|_X \big )\\
\|\mb N_{a(\tau)}(\Phi(\tau))-\mb N_{b(\tau)}(\Psi(\tau))\|&\lesssim
\delta e^{-2\omega_p\tau}\big (\|\Phi-\Psi\|_{\mc X}+\|a-b\|_X \big ) \\
\|\mb P_{a_\infty}\partial_\tau \Psi_{a(\tau)}-\mb P_{b_\infty}\partial_\tau \Psi_{b(\tau)}\|&\lesssim
\delta e^{-2\omega_p\tau}\|a-b\|_X 
\end{align*}
as well as
\[
\|(\mb I-\mb Q_{a_\infty})\partial_\tau \Psi_{a(\tau)}
-(\mb I-\mb Q_{b_\infty})\partial_\tau \Psi_{b(\tau)}\|\lesssim
\delta e^{-2\omega_p\tau}\|a-b\|_X  \]
for all $\Phi,\Psi \in \mc X_\delta$, $a,b \in X_\delta$, and $\tau\geq 0$.
\end{lemma}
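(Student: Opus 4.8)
The plan is to reproduce the argument of Lemma~\ref{lem:est1}, now keeping track of the dependence on the perturbations and not merely on their size. First I would record the elementary facts about $a,b\in X_\delta$: since $a(0)=b(0)=0$ we have $|a(\tau)-b(\tau)|\le\|a-b\|_X$ for all $\tau$, while $|a_\infty-b_\infty|\le\int_0^\infty|\dot a(\sigma)-\dot b(\sigma)|\,d\sigma\lesssim\|a-b\|_X$ and $|a(\tau)-a_\infty|\le\int_\tau^\infty|\dot a(\sigma)|\,d\sigma\lesssim\delta e^{-\omega_p\tau}$, exactly as in the proof of Lemma~\ref{lem:est1}. The decisive extra observation is that the \emph{tail difference} decays: writing $a(\tau)-a_\infty=-\int_\tau^\infty\dot a(\sigma)\,d\sigma$ gives
\[
\bigl|(a(\tau)-a_\infty)-(b(\tau)-b_\infty)\bigr|\le\int_\tau^\infty|\dot a(\sigma)-\dot b(\sigma)|\,d\sigma\lesssim e^{-\omega_p\tau}\|a-b\|_X ,
\]
even though $a(\tau)-b(\tau)$ by itself need not decay; this is precisely what upgrades the naive $e^{-\omega_p\tau}$ bounds to the required $e^{-2\omega_p\tau}$ ones. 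The nonlinear term is then immediate: applying Lemma~\ref{lem:N} with $\mb u=\Phi(\tau)$, $\mb v=\Psi(\tau)$ and parameters $a(\tau),b(\tau)$, and using $\|\Phi(\tau)\|+\|\Psi(\tau)\|\lesssim\delta e^{-\omega_p\tau}$, $\|\Phi(\tau)-\Psi(\tau)\|\le e^{-\omega_p\tau}\|\Phi-\Psi\|_{\mc X}$ and $|a(\tau)-b(\tau)|\le\|a-b\|_X$ yields the stated bound for $\|\mb N_{a(\tau)}(\Phi(\tau))-\mb N_{b(\tau)}(\Psi(\tau))\|$ once $\delta\le 1$.

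For the term $\hat{\mb L}_{a(\tau)}\Phi(\tau)-\hat{\mb L}_{b(\tau)}\Psi(\tau)$ I would split
\[
\hat{\mb L}_{a(\tau)}\Phi(\tau)-\hat{\mb L}_{b(\tau)}\Psi(\tau)=\bigl(\mb L'_{a(\tau)}-\mb L'_{a_\infty}\bigr)\bigl(\Phi(\tau)-\Psi(\tau)\bigr)+\bigl[(\mb L'_{a(\tau)}-\mb L'_{a_\infty})-(\mb L'_{b(\tau)}-\mb L'_{b_\infty})\bigr]\Psi(\tau).
\]
The first summand is handled by Lemma~\ref{lem:LipL'} (which gives $\|\mb L'_{a(\tau)}-\mb L'_{a_\infty}\|\lesssim|a(\tau)-a_\infty|\lesssim\delta e^{-\omega_p\tau}$) times $\|\Phi(\tau)-\Psi(\tau)\|\le e^{-\omega_p\tau}\|\Phi-\Psi\|_{\mc X}$. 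For the second summand one needs the refined estimate $\|(\mb L'_{a(\tau)}-\mb L'_{a_\infty})-(\mb L'_{b(\tau)}-\mb L'_{b_\infty})\|\lesssim e^{-\omega_p\tau}\|a-b\|_X$, which I would obtain by writing, with $\sigma\mapsto\mb L'_{a(\sigma)}$ differentiable as in the proof of Lemma~\ref{lem:LipL'},
\[
\mb L'_{a(\tau)}-\mb L'_{a_\infty}=-\int_\tau^\infty\dot a^k(\sigma)\,(\partial_{a^k}\mb L'_a)\big|_{a=a(\sigma)}\,d\sigma ,
\]
subtracting the analogous expression for $b$, and splitting the integrand into a piece carrying $\dot a-\dot b$ (controlled by $|\dot a(\sigma)-\dot b(\sigma)|\lesssim e^{-\omega_p\sigma}\|a-b\|_X$ and boundedness of $\partial_{a^k}\mb L'_a$) and a piece carrying $\dot b$ times a Lipschitz factor (controlled by $|\dot b(\sigma)|\lesssim\delta e^{-\omega_p\sigma}$ and $\|(\partial_{a^k}\mb L'_a)|_{a(\sigma)}-(\partial_{a^k}\mb L'_a)|_{b(\sigma)}\|\lesssim\|a-b\|_X$); both integrate to $e^{-\omega_p\tau}\|a-b\|_X$. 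Multiplying by $\|\Psi(\tau)\|\lesssim\delta e^{-\omega_p\tau}$ closes this term.

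Finally, for the two terms involving $\partial_\tau\Psi_{a(\tau)}=\dot a^k(\tau)\mb h_{a(\tau),k}$ I would use $\mb P_{a_\infty}\mb h_{a_\infty,k}=\mb 0$ (respectively $(\mb I-\mb Q_{a_\infty})\mb h_{a_\infty,k}=\mb 0$) from Proposition~\ref{prop:linear} to write $\mb P_{a_\infty}\partial_\tau\Psi_{a(\tau)}=\dot a^k(\tau)\,\mb P_{a_\infty}(\mb h_{a(\tau),k}-\mb h_{a_\infty,k})$ and likewise for $b$, so that a Leibniz split gives
\[
\mb P_{a_\infty}\partial_\tau\Psi_{a(\tau)}-\mb P_{b_\infty}\partial_\tau\Psi_{b(\tau)}=\bigl(\dot a^k(\tau)-\dot b^k(\tau)\bigr)\mb P_{a_\infty}(\mb h_{a(\tau),k}-\mb h_{a_\infty,k})+\dot b^k(\tau)\bigl[\mb P_{a_\infty}(\mb h_{a(\tau),k}-\mb h_{a_\infty,k})-\mb P_{b_\infty}(\mb h_{b(\tau),k}-\mb h_{b_\infty,k})\bigr].
\]
In the first product the first factor is $\le e^{-\omega_p\tau}\|a-b\|_X$ and the second is $\lesssim\delta e^{-\omega_p\tau}$ by Lemma~\ref{lem:Lip}; in the second product $|\dot b^k(\tau)|\lesssim\delta e^{-\omega_p\tau}$ while the remaining bracket is $\lesssim e^{-\omega_p\tau}\|a-b\|_X$, obtained by expanding $\mb h_{a(\tau),k}-\mb h_{a_\infty,k}=(a^l(\tau)-a^l_\infty)\int_0^1(\partial_{a^l}\mb h_{a,k})\big|_{a=a_\infty+s(a(\tau)-a_\infty)}\,ds$ and invoking the tail-difference estimate together with the Lipschitz bounds of Lemma~\ref{lem:Lip} for $\mb P_a$ and $\mb h_{a,k}$ and the smoothness of $a\mapsto\partial_{a^l}\mb h_{a,k}=\partial_{a^l}\partial_{a^k}\Psi_a$. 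Both products are therefore $\lesssim\delta e^{-2\omega_p\tau}\|a-b\|_X$, and the estimate with $\mb I-\mb Q_{a_\infty}$ in place of $\mb P_{a_\infty}$ follows verbatim. The main obstacle throughout is exactly this recovery of the full $e^{-2\omega_p\tau}$ decay: a direct application of the Lipschitz bounds of Lemmas~\ref{lem:LipL'} and~\ref{lem:Lip} only produces $e^{-\omega_p\tau}$, and the extra factor comes from exploiting that $\mb L'_{a(\tau)}-\mb L'_{a_\infty}$ and $\mb h_{a(\tau),k}-\mb h_{a_\infty,k}$ vanish as $\tau\to\infty$, together with the decaying tail difference $|(a(\tau)-a_\infty)-(b(\tau)-b_\infty)|\lesssim e^{-\omega_p\tau}\|a-b\|_X$.
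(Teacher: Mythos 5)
Your proof is correct and follows essentially the same route as the paper: same Leibniz-type decompositions, the same observation that $\hat{\mb L}_{a(\tau)}$, $\hat{\mb h}_{a(\tau),k}$, and the tail difference $|(a(\tau)-a_\infty)-(b(\tau)-b_\infty)|$ all gain a factor $e^{-\omega_p\tau}$ from the decay of $\dot a$, and the same direct invocation of Lemma~\ref{lem:N} for the nonlinear term. The only cosmetic difference is that you represent $\hat{\mb h}_{a(\tau),k}-\hat{\mb h}_{b(\tau),k}$ via a fundamental-theorem-of-calculus path from $a_\infty$ to $a(\tau)$ rather than the paper's integral $-\int_\tau^\infty \dot a^k(\sigma)\partial_{a^k}\mb h_{a(\sigma),j}\,d\sigma$, but both yield the identical estimate~\eqref{eq:hhat}.
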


\begin{proof}
Recall that $\hat{\mb L}_{a(\tau)}=\mb L_{a(\tau)}'-\mb L_{a_\infty}'$ where
\[ \mb L_{a(\tau)}' \mb u=\left (\begin{array}{c}0 \\ p\psi^{p-1}_{a(\tau),1}u_1 \end{array}
\right ). \]
Since $(a,\xi)\mapsto \psi_{a,1}(\xi)$ is smooth for small $a \in \R^3$,
we have
\begin{align*} p\psi_{a_\infty,1}^{p-1}(\xi)-p\psi_{a(\tau),1}^{p-1}(\xi)&=p\int_\tau^\infty
\partial_\sigma \psi_{a(\sigma),1}^{p-1}(\xi)d\sigma \\
&=\int_\tau^\infty \dot a^k(\sigma)\varphi_{a(\sigma),k}(\xi)d\sigma
\end{align*}
where $\varphi_{a,k}(\xi):=p\partial_{a^k}\psi_{a,1}^{p-1}(\xi)$.
Consequently, for $\mb u\in C^\infty(\overline{\B^3})^2$, we infer
\begin{align*}
\hat{\mb L}_{a(\tau)}&\mb u(\xi)-\hat{\mb L}_{b(\tau)}\mb u(\xi) \\
&= \left (\begin{array}{c}0 \\ u_1(\xi) 
\end{array} \right ) 
\int_\tau^\infty [\dot a^k(\sigma)\varphi_{a(\sigma),k}(\xi)
-\dot b^k(\sigma)\varphi_{b(\sigma),k}(\xi)]d\sigma 
\end{align*}
and this yields
\begin{align*}
\|&\hat{\mb L}_{a(\tau)}\mb u-\hat{\mb L}_{b(\tau)}\mb u\| \\
&\lesssim \|u_1\|_{H^1(\B^3)}\int_\tau^\infty \|\dot a^k(\sigma)\varphi_{a(\sigma),k}-\dot b^k(\sigma)
\varphi_{b(\sigma),k}\|_{W^{1,\infty}(\B^3)}d\sigma \\
&\lesssim \|\mb u\|\int_\tau^\infty |\dot a(\sigma)-\dot b(\sigma)|d\sigma
+\|\mb u\|\int_\tau^\infty |\dot a(\sigma)||a(\sigma)-b(\sigma)|d\sigma \\
&\lesssim \|\mb u\| \int_\tau^\infty e^{-\omega_p\sigma}\|a-b\|_Xd\sigma
\end{align*}
which shows that 
$\|\hat{\mb L}_{a(\tau)}-\hat{\mb L}_{b(\tau)}\|\lesssim e^{-\omega_p\tau}\|a-b\|_X$.
Thus, we find
\begin{align*} 
\|&\hat{\mb L}_{a(\tau)}\Phi(\tau)-\hat{\mb L}_{b(\tau)}\Psi(\tau)\| \\
&\lesssim \|\hat{\mb L}_{a(\tau)}-\hat{\mb L}_{b(\tau)}\|\|\Phi(\tau)\|
+\|\hat{\mb L}_{b(\tau)}\|\|\Phi(\tau)-\Psi(\tau)\| \\
&\lesssim \delta e^{-2\omega_p\tau}\big (\|a-b\|_X+\|\Phi-\Psi\|_{\mc X}\big ),
\end{align*}
as claimed.
The bound on the nonlinearity follows directly from Lemma \ref{lem:N}.
Finally, by using $\mb P_{a_\infty}\partial_\tau \Psi_{a(\tau)}=
\dot a^k(\tau)\mb P_{a_\infty}\hat{\mb h}_{a(\tau),k}$ with
$\hat{\mb h}_{a(\tau),k}=\mb h_{a(\tau),k}-\mb h_{a_\infty,k}$,
we infer
\begin{align*}
\|&\mb P_{a_\infty}\partial_\tau \Psi_{a(\tau)}-\mb P_{b_\infty}\partial_\tau \Psi_{b(\tau)}\| \\
&\lesssim \|\dot a^k(\tau)\mb P_{a_\infty}-\dot b^k(\tau)\mb P_{b_\infty}\|\|\hat{\mb h}_{a(\tau),k}\|
+\|\dot b^k(\tau)\mb P_{b_\infty}\|\|\hat{\mb h}_{a(\tau),k}-\hat{\mb h}_{b(\tau),k}\| \\
&\lesssim \delta e^{-2\omega_p\tau}\|a-b\|_X
\end{align*}
where we have used the bound
\begin{equation}
\label{eq:hhat}
 \|\hat{\mb h}_{a(\tau),k}-\hat{\mb h}_{b(\tau),k}\|\lesssim e^{-\omega_p\tau}\|a-b\|_X 
 \end{equation}
which is a consequence of the representation
\begin{align*}
\hat{\mb h}_{a(\tau),k}(\xi)=-\int_\tau^\infty \partial_\sigma \mb h_{a(\sigma),k}(\xi)d\sigma
\end{align*}
and the smoothness of $(a,\xi)\mapsto \partial_a \mb h_{a,k}(\xi)$ (for $a \in \R^3$ small).
Since $(\mb I-\mb Q_{a_\infty})\partial_\tau \Psi_{a(\tau)}
=\dot a^k(\tau)(\mb I-\mb Q_{a_\infty})\hat{\mb h}_{a(\tau),k}$, the last claim
follows as well.
\end{proof}

\subsection{The modulation equation}
Our goal is to construct a global (in $\tau$) solution of Eq.~\eqref{eq:Phiweak}.
The difficulty here is of course that the linearized evolution $\mb S_{a_\infty}$ 
has the unstable
subspaces $\rg \mb P_{a_\infty}$ and $\rg \mb Q_{a_\infty}$ which are ``induced'' by
the time-translation and Lorentz symmetry of the problem.
We will ``kill'' the Lorentz instability by modulation, i.e., 
we choose $a(\tau)$ in such a way that
this instability is suppressed. 
In order to derive an equation for $a(\tau)$, we formally apply $\mb Q_{a_\infty,j}$ to
Eq.~\eqref{eq:Phiweak}.
By Proposition \ref{prop:linear} this yields
\[ \mb Q_{a_\infty,j}\Phi(\tau)=\mb Q_{a_\infty,j}\mb u+\mb Q_{a_\infty,j}
\int_0^\tau \Big [\hat{\mb L}_{a(\sigma)}\Phi(\sigma)+\mb N_{a(\sigma)}(\Phi(\sigma)) 
-\partial_\sigma \Psi_{a(\sigma)} \Big ]d\sigma.  \]
We would like to set the right-hand side equal to zero.
This, however, is not possible since at $\tau=0$ this would entail the condition
$\mb Q_{a_\infty,j}\mb u=\mb 0$ on the initial data which is not satisfied in general.
To go around this small technicality, we choose a smooth cut-off $\chi: [0,\infty)\to [0,1]$ 
satisfying
$\chi(\tau)=1$ for $\tau \in [0,1]$, $\chi(\tau)=0$ for $\tau\geq 4$, 
and $|\chi'(\tau)|\leq 1$ for all $\tau\geq 0$.
Then we make the ansatz $\mb Q_{a_\infty,j}\Phi(\tau)=\chi(\tau)\mb h$
for a $\mb h\in \rg\mb Q_{a_\infty}$
and evaluation at $\tau=0$ yields $\mb h=\mb Q_{a_\infty,j}\mb u$.
Thus, we obtain the three modulation equations
\begin{equation}
\label{eq:mod1}
[1-\chi(\tau)]\mb Q_{a_\infty,j}\mb u+\mb Q_{a_\infty,j}
\int_0^\tau \Big [\hat{\mb L}_{a(\sigma)}\Phi(\sigma)+\mb N_{a(\sigma)}(\Phi(\sigma)) 
-\partial_\sigma \Psi_{a(\sigma)} \Big ]d\sigma=\mb 0
\end{equation}
for the three functions $a_j: [0,\infty)\to \R$, $j\in \{1,2,3\}$.
Recall that by Proposition \ref{prop:linear} we have $\mb Q_{a_\infty,j}\mb h_{a_\infty,k}
=\delta_{jk}\mb h_{a_\infty,j}$.
Thus, with $\hat{\mb h}_{a(\tau),k}=\mb h_{a(\tau),k}-\mb h_{a_\infty,k}$, we obtain
\begin{align*} 
\mb Q_{a_\infty,j}\int_0^\tau \partial_\sigma \Psi_{a(\sigma)}d\sigma&=
\mb Q_{a_\infty,j}\int_0^\tau \dot a^k(\sigma)[\mb h_{a_\infty,k}
+\hat{\mb h}_{a(\sigma),k}]\,d\sigma  \\
&=a_j(\tau)\mb h_{a_\infty,j}+\mb Q_{a_\infty,j}\int_0^\tau \dot a^k(\sigma)
\hat{\mb h}_{a(\sigma),k}\,d\sigma
\end{align*}
if we assume $a(0)=0$.
Inserting this into Eq.~\eqref{eq:mod1} we find
\begin{align}
\label{eq:mod}
a_j(\tau)\mb h_{a_\infty,j}&=
[1-\chi(\tau)]\mb Q_{a_\infty,j}\mb u \nonumber \\
&\quad +\mb Q_{a_\infty,j}
\int_0^\tau \Big [\hat{\mb L}_{a(\sigma)}\Phi(\sigma)
+\mb N_{a(\sigma)}(\Phi(\sigma)) \Big ]d\sigma \nonumber \\
&\quad -\mb Q_{a_\infty,j}\int_0^\tau \dot a^k(\sigma)\hat{\mb h}_{a(\sigma),k}\,d\sigma
\end{align}
for $j\in \{1,2,3\}$.

\subsection{Solvability of the modulation equation}
Next, we show that $a: [0,\infty)\to\R^3$ can indeed be chosen in such a way that
Eq.~\eqref{eq:mod} holds, provided that $\Phi$ satisfies a suitable smallness condition.

\begin{lemma}
\label{lem:a}
Let $\delta>0$ be sufficiently small and let $c>0$ be sufficiently large.
Furthermore, suppose $\Phi\in \mc X_\delta$.
If $\|\mb u\|\leq \frac{\delta}{c}$ then there exists a unique function $a \in X_\delta$ 
such that Eq.~\eqref{eq:mod} holds for each $j\in \{1,2,3\}$.
Furthermore, the map $\Phi \mapsto a: \mc X_\delta \subset \mc X\to X$ is Lipschitz-continuous.
\end{lemma}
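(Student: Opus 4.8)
The plan is to recast the three equations \eqref{eq:mod} as a single fixed-point equation for $a$ in the complete metric space $X_\delta$ and solve it by the contraction mapping principle; the claimed Lipschitz dependence on $\Phi$ then follows from the standard parameter-dependence of fixed points. For $a\in X_\delta$ with $\delta$ small, the computation in the proof of Lemma \ref{lem:est1} shows that $a_\infty:=\lim_{\tau\to\infty}a(\tau)$ exists, $|a_\infty|\lesssim\delta$, and $|a(\tau)-a_\infty|\lesssim\delta e^{-\omega_p\tau}$. Since $\mb h_{a_\infty,j}$ is a small perturbation of $\mb h_{0,j}\neq\mb 0$ (Lemma \ref{lem:Lip}), the three vectors $\mb h_{a_\infty,1},\mb h_{a_\infty,2},\mb h_{a_\infty,3}$ are linearly independent for $\delta$ small, so $\rg\mb Q_{a_\infty}$ carries a dual basis and there are bounded functionals $\Lambda_{a_\infty,j}$ on $\mc H$, depending Lipschitz-continuously on $a_\infty$ (again by Lemma \ref{lem:Lip}), with $\mb Q_{a_\infty,j}\mb v=\Lambda_{a_\infty,j}(\mb v)\,\mb h_{a_\infty,j}$ for all $\mb v$. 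Because every term on the right-hand side of \eqref{eq:mod} carries the prefactor $\mb Q_{a_\infty,j}$, applying $\Lambda_{a_\infty,j}$ turns \eqref{eq:mod} into the equivalent scalar equation $a_j(\tau)=(\mb Ka)_j(\tau)$, where $(\mb Ka)_j$ is the image of the right-hand side under $\Lambda_{a_\infty,j}$; a fixed point of $\mb K$ in $X_\delta$ is precisely a solution of \eqref{eq:mod}, and $a(0)=0$ is automatic since $\chi(0)=1$ and the integrals vanish at $\tau=0$.

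Next I would show that $\mb K$ maps $X_\delta$ into itself. Differentiating $(\mb Ka)_j$ in $\tau$, the derivative is the sum of $-\chi'(\tau)\Lambda_{a_\infty,j}(\mb Q_{a_\infty,j}\mb u)$, which is supported in $[1,4]$ and bounded there by $C\|\mb u\|\leq C\delta/c$; of $\Lambda_{a_\infty,j}\mb Q_{a_\infty,j}\bigl[\hat{\mb L}_{a(\tau)}\Phi(\tau)+\mb N_{a(\tau)}(\Phi(\tau))\bigr]$, bounded by $C\delta^2e^{-2\omega_p\tau}$ by Lemma \ref{lem:est1}; and of $-\dot a^k(\tau)\Lambda_{a_\infty,j}(\mb Q_{a_\infty,j}\hat{\mb h}_{a(\tau),k})$, bounded by $C|\dot a(\tau)|\max_k\|\hat{\mb h}_{a(\tau),k}\|\lesssim\delta^2e^{-2\omega_p\tau}$ using $|\dot a(\tau)|\leq\delta e^{-\omega_p\tau}$ and Lemma \ref{lem:Lip}. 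Multiplying by the weight $e^{\omega_p\tau}$ (which is bounded on $\supp\chi'$ and absorbs one factor $e^{-\omega_p\tau}$ from the other two terms) gives $e^{\omega_p\tau}\,|\tfrac{d}{d\tau}(\mb Ka)(\tau)|\lesssim\delta/c+\delta^2$, which is $\leq\delta$ once $c$ is chosen large and $\delta$ small; integrating this bound together with $(\mb Ka)(0)=0$ also yields $\|\mb Ka\|_X<\infty$. Hence $\mb Ka\in X_\delta$.

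The contraction estimate is the core of the argument. For $a,b\in X_\delta$ one has $|a_\infty-b_\infty|\leq\int_0^\infty|\dot a-\dot b|\lesssim\|a-b\|_X$, and I would split each difference $\tfrac{d}{d\tau}(\mb Ka)_j-\tfrac{d}{d\tau}(\mb Kb)_j$ term by term into ``the difference of the $a_\infty$-dependent coefficients $\Lambda_{a_\infty,j}\mb Q_{a_\infty,j}$ times the $a$-quantity'' plus ``a fixed coefficient times the difference of the $a$-quantities''. The first type is controlled by the Lipschitz bounds of Lemma \ref{lem:Lip} (for $\Lambda$, $\mb Q$, $\mb h$) together with Lemma \ref{lem:est1}, producing a factor $|a_\infty-b_\infty|\lesssim\|a-b\|_X$ times $\delta$ (or $\delta/c$ for the $\mb u$-term); the second type by Lemma \ref{lem:est2} for the $\hat{\mb L}_{a(\tau)}\Phi(\tau)$ and $\mb N_{a(\tau)}(\Phi(\tau))$ differences (with $\Phi$ fixed, so the bound is $\lesssim\delta e^{-2\omega_p\tau}\|a-b\|_X$), by $|\dot a(\tau)-\dot b(\tau)|\leq e^{-\omega_p\tau}\|a-b\|_X$ for the $\dot a^k$-factor, and by \eqref{eq:hhat} for the $\hat{\mb h}_{a(\tau),k}$-factor. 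In every case a full factor of $\delta$ (or $\delta/c$) is gained, so after weighting and taking the supremum, $\|\mb Ka-\mb Kb\|_X\leq C(\delta+1/c)\|a-b\|_X$; for $\delta$ small and $c$ large $\mb K$ is a contraction, and Banach's theorem gives the unique $a\in X_\delta$ solving \eqref{eq:mod}. For the Lipschitz dependence on $\Phi$ I would write $\mb K=\mb K_\Phi$ and use $a(\Phi)-a(\Psi)=\bigl(\mb K_\Phi a(\Phi)-\mb K_\Phi a(\Psi)\bigr)+\bigl(\mb K_\Phi a(\Psi)-\mb K_\Psi a(\Psi)\bigr)$, bounding the first summand by the contraction estimate and the second by Lemma \ref{lem:est2} (only the $\hat{\mb L}$- and $\mb N$-terms depend on the argument, with difference $\lesssim\delta e^{-2\omega_p\tau}\|\Phi-\Psi\|_{\mc X}$), whence $\|a(\Phi)-a(\Psi)\|_X\lesssim\|\Phi-\Psi\|_{\mc X}$ after absorbing.

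The hard part is not any single inequality but the bookkeeping forced by the implicit, nonlinear dependence of \emph{all} the ingredients — the projections $\mb Q_{a_\infty,j}$, the eigenfunctions $\mb h_{a_\infty,j}$, the coefficient functionals $\Lambda_{a_\infty,j}$, and the tail-subtracted objects $\hat{\mb L}_{a(\sigma)}$ and $\hat{\mb h}_{a(\sigma),k}$ — on the unknown limit $a_\infty$, which is itself a nonlocal functional of the trajectory $a$. One must track this carefully so that every difference in the contraction estimate picks up a genuine power of $\delta$ (or of $1/c$); Lemmas \ref{lem:Lip}, \ref{lem:est1} and \ref{lem:est2} are tailored to make exactly this work out.
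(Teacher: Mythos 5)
Your proof is correct and follows essentially the same route as the paper's: both recast the vector equation \eqref{eq:mod} as a scalar fixed-point problem for $a$ by extracting the coefficient along $\mb h_{a_\infty,j}$ — you via an abstract coefficient functional $\Lambda_{a_\infty,j}$ with $\mb Q_{a_\infty,j}\mb v=\Lambda_{a_\infty,j}(\mb v)\mb h_{a_\infty,j}$, the paper via the concrete choice $\|\mb h_{a_\infty,j}\|^{-2}(\,\cdot\,|\,\mb h_{a_\infty,j})$, which is the same functional — and then verify self-mapping, contraction, and Lipschitz dependence on $\Phi$ with Lemmas \ref{lem:Lip}, \ref{lem:est1}, \ref{lem:est2} in the same way. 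The bookkeeping you emphasize (tracking the nonlocal dependence of $a_\infty$ and of $\mb Q_{a_\infty,j}$, $\mb h_{a_\infty,j}$, $\hat{\mb h}_{a(\cdot),k}$ on the trajectory $a$) is exactly what the paper does, only more tersely.
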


\begin{proof}
We rewrite Eq.~\eqref{eq:mod} as
\begin{align}
\label{eq:Gj}
a_j(\tau)\mb h_{a_\infty,j}&=
-\int_0^\tau \chi'(\sigma)\mb Q_{a_\infty,j}\mb u\,d\sigma \nonumber \\
&\quad +\int_0^\tau \mb Q_{a_\infty,j}\Big [\hat{\mb L}_{a(\sigma)}\Phi(\sigma)
+\mb N_{a(\sigma)}(\Phi(\sigma)) \Big ]d\sigma \nonumber \\
&\quad -\int_0^\tau \dot a^k(\sigma)\mb Q_{a_\infty,j}\hat{\mb h}_{a(\sigma),k}\,d\sigma 
\nonumber \\
&=:\int_0^\tau\mb G_j(a,\Phi,\mb u)(\sigma) d\sigma
\end{align}
and we set $G_j(a,\Phi,\mb u)(\sigma):=\|\mb h_{a_\infty,j}\|^{-2}(\mb G_j(a,\Phi,\mb u)(\sigma)|
\mb h_{a_\infty,j})$.
Then we have 
\[ a(\tau)=\int_0^\tau G(a,\Phi,\mb u)(\sigma)d\sigma=:\tilde G(a,\Phi,\mb u)(\tau) \]
where $G:=(G_1,G_2,G_3)$.
Note that, by Lemma \ref{lem:Lip},
\[ \|\hat{\mb h}_{a(\tau),k}\|=\|\mb h_{a(\tau),k}-\mb h_{a_\infty,k}\|\lesssim |a(\tau)-a_\infty|\lesssim \delta e^{-\omega_p\tau}, \]
and trivially, 
$\|\chi'(\tau)\mb Q_{a_\infty,j}\mb u\|\leq \|\chi'(\tau)\mb u\|\lesssim \frac{\delta}{c} 
e^{-2\omega_p\tau}$.
Consequently, by Lemma \ref{lem:est1} we see that $\|\mb G_j(a,\Phi,\mb u)(\tau)\|
\lesssim (\frac{\delta}{c}+\delta^2)e^{-2\omega_p\tau}$ and thus,
\[ |G(a,\Phi,\mb u)(\tau)|\leq \delta e^{-2\omega_p\tau} \]
for $a\in X_\delta$, provided $\delta>0$ is sufficiently small and $c>0$ is sufficiently
large.
This shows that $a\in X_\delta$ implies $\tilde G(a,\Phi,\mb u)\in X_\delta$.
Furthermore, Lemma \ref{lem:est2} in conjunction with the bounds
\[ \|\chi'(\tau)\mb Q_{a_\infty,j}\mb u-\chi'(\tau)\mb Q_{b_\infty,j}\mb u\|
\lesssim \delta e^{-2\omega_p\tau}|a_\infty-b_\infty|\lesssim 
\delta e^{-2\omega_p\tau}\|a-b\|_X,\]
and (use Eq.~\eqref{eq:hhat})
\[
\|\dot a^k(\tau)\mb Q_{a_\infty,j}\hat{\mb h}_{a(\tau),k}
-\dot b^k(\tau)\mb Q_{b_\infty,j}\hat{\mb h}_{b(\tau),k}\|\lesssim \delta e^{-2\omega_p\tau}
\|a-b\|_X
\]
yields
\[ \|\mb G_j(a,\Phi,\mb u)(\tau)-\mb G_j(b,\Phi,\mb u)(\tau)\|\lesssim \delta e^{-2\omega_p\tau}
\|a-b\|_X \]
which implies
\[ \|\tilde G(a,\Phi,\mb u)-\tilde G(b,\Phi,\mb u)\|_X\lesssim \delta \|a-b\|_X \]
for all $a,b \in X_\delta$.
Thus, the contraction mapping principle yields the existence and uniqueness of $a\in X_\delta$ with
$a(\tau)=\tilde G(a,\Phi,\mb u)(\tau)$.

Finally, if $b(\tau)=\tilde G(b,\Psi,\mb u)(\tau)$ for $\Psi \in \mc X_\delta$, we obtain
\begin{align*}
 |\dot a(\tau)-\dot b(\tau)|&\lesssim |G(a,\Phi,\mb u)(\tau)-G(b,\Psi,\mb u)(\tau)| \\
 &\lesssim \delta e^{-2\omega_p\tau}\|\Phi-\Psi\|_{\mc X}+\delta e^{-2\omega_p\tau}\|a-b\|_X
 \end{align*}
by Lemma \ref{lem:est2}, which yields
$\|a-b\|_X\lesssim \delta \|\Phi-\Psi\|_{\mc X}$.
\end{proof}

\subsection{The time-translation instability}
Next, we turn to the unstable subspace $\rg \mb P_{a_\infty}$.
This time we proceed differently and add a correction term which stabilizes the evolution.
In order to derive this correction, we formally 
apply $\mb P_{a_\infty}$ to Eq.~\eqref{eq:Phiweak}
which yields
\begin{align*}
 \mb P_{a_\infty}\Phi(\tau)&=e^\tau \mb P_{a_\infty}\mb u \\
&\quad +e^\tau \mb P_{a_\infty}\int_0^\tau e^{-\sigma}\Big 
 [\hat{\mb L}_{a(\sigma)}\Phi(\sigma)+\mb N_{a(\sigma)}(\Phi(\sigma)) \Big] d\sigma \\
&\quad -e^\tau \mb P_{a_\infty}\int_0^\tau e^{-\sigma}\partial_\sigma \Psi_{a(\sigma)}\;d\sigma.
\end{align*}
Motivated by this we set
\begin{align*}
 \mb C(\Phi, a, \mb u):=&\mb P_{a_\infty}\mb u \\
 &+\mb P_{a_\infty}\int_0^\infty e^{-\sigma}\left [\hat{\mb L}_{a(\sigma)}\Phi(\sigma)+\mb N_{a(\sigma)}(\Phi(\sigma))
-\partial_\sigma \Psi_{a(\sigma)} \right ] d\sigma 
\end{align*}
and consider the \emph{modified equation}
\begin{align}
\label{eq:modified}
\Phi(\tau)=&\mb S_{a_\infty}(\tau)[\mb u-\mb C(\Phi, a, \mb u)] \nonumber \\
&+\int_0^\tau \mb S_{a_\infty}(\tau-\sigma) 
\Big [\hat{\mb L}_{a(\sigma)}\Phi(\sigma)+\mb N_{a(\sigma)}(\Phi(\sigma)) 
-\partial_\sigma \Psi_{a(\sigma)} \Big ]d\sigma. 
\end{align}

\begin{proposition}[Solution of the modified equation]
\label{prop:modified}
Fix a sufficiently large $c>0$ and let $\delta>0$ be sufficiently small.
If $\|\mb u\|\leq \frac{\delta}{c}$ then there exist unique functions $a \in X_\delta$ and
$\Phi \in \mc X_\delta$ 
such that Eq.~\eqref{eq:modified} holds for all $\tau\geq 0$.
\end{proposition}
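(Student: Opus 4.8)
The plan is a nested fixed point argument built on Lemma~\ref{lem:a}. Given $\Phi\in\mc X_\delta$, Lemma~\ref{lem:a} produces the unique modulation parameter $a=a(\Phi)\in X_\delta$ solving the modulation equation \eqref{eq:mod}, and the assignment $\Phi\mapsto a(\Phi)$ is Lipschitz with small constant (bounded by $C\delta$). Substituting $a=a(\Phi)$ into the right-hand side of \eqref{eq:modified} defines an operator $\mc K$ on $\mc X_\delta$, and the proposition reduces to showing that $\mc K$ has a unique fixed point in $\mc X_\delta$; the accompanying $a$ is then $a(\Phi)$, which lies in $X_\delta$, and uniqueness of the pair follows from the uniqueness clause of Lemma~\ref{lem:a}.

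First I would show that $\mc K$ maps $\mc X_\delta$ into itself. The point is to estimate $\mc K(\Phi)(\tau)$ componentwise with respect to the decomposition $\mb I=\mb P_{a_\infty}+\mb Q_{a_\infty}+\tilde{\mb P}_{a_\infty}$ of Proposition~\ref{prop:linear}, where $a_\infty=a(\Phi)_\infty$ and $\mb Q_{a_\infty}=\sum_j\mb Q_{a_\infty,j}$. On $\rg\tilde{\mb P}_{a_\infty}$ the semigroup decays like $e^{-\frac{4/3}{p-1}\tau}$, so applying $\tilde{\mb P}_{a_\infty}$ to \eqref{eq:modified} (note $\tilde{\mb P}_{a_\infty}\mb C(\Phi,a,\mb u)=\mb 0$ since $\mb C\in\rg\mb P_{a_\infty}$) and using Lemma~\ref{lem:est1} gives $\|\tilde{\mb P}_{a_\infty}\mc K(\Phi)(\tau)\|\lesssim(\frac{\delta}{c}+\delta^2)e^{-\omega_p\tau}$. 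On $\rg\mb Q_{a_\infty}$ the correction term again drops out and, crucially, the modulation equation \eqref{eq:mod1} satisfied by $a(\Phi)$ collapses the Duhamel integral, so that $\mb Q_{a_\infty,j}\mc K(\Phi)(\tau)=\chi(\tau)\mb Q_{a_\infty,j}\mb u$; since $\chi$ is supported in $[0,4]$ this is $\lesssim\frac{\delta}{c}e^{-\omega_p\tau}$. The decisive point is the $\mb P_{a_\infty}$ component: because $\mb S_{a_\infty}(\tau)\mb P_{a_\infty}=e^\tau\mb P_{a_\infty}$, applying $\mb P_{a_\infty}$ to \eqref{eq:modified} and inserting the definition of $\mb C(\Phi,a,\mb u)$ produces the exact cancellation
\[
\mb P_{a_\infty}\mc K(\Phi)(\tau)=-e^\tau\int_\tau^\infty e^{-\sigma}\mb P_{a_\infty}\big[\hat{\mb L}_{a(\sigma)}\Phi(\sigma)+\mb N_{a(\sigma)}(\Phi(\sigma))-\partial_\sigma\Psi_{a(\sigma)}\big]\,d\sigma,
\]
and Lemma~\ref{lem:est1} bounds the integrand by $C\delta^2e^{-2\omega_p\sigma}$, whence $\|\mb P_{a_\infty}\mc K(\Phi)(\tau)\|\lesssim\delta^2e^{-2\omega_p\tau}$. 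Summing the three pieces, $\|\mc K(\Phi)\|_{\mc X}\lesssim\frac{\delta}{c}+\delta^2\le\delta$ for $c$ large and $\delta$ small; continuity in $\tau$ of $\mc K(\Phi)$ follows from strong continuity of $\mb S_{a_\infty}$ together with the (integrable) bounds on the integrand.

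Next I would establish the contraction estimate. For $\Phi,\Psi\in\mc X_\delta$ with $a=a(\Phi)$, $b=a(\Psi)$, Lemma~\ref{lem:a} gives $\|a-b\|_X\lesssim\delta\|\Phi-\Psi\|_{\mc X}$, and in particular $|a_\infty-b_\infty|\le\|a-b\|_X$. One then repeats the three-component estimate above for $\mc K(\Phi)-\mc K(\Psi)$, now also accounting for the variation of the linear data in $a_\infty$: the Lipschitz bounds of Lemma~\ref{lem:Lip} control the differences $\mb S_a(\tau)\tilde{\mb P}_a-\mb S_b(\tau)\tilde{\mb P}_b$, $\mb P_a-\mb P_b$, $\mb Q_{a,j}-\mb Q_{b,j}$, while Lemma~\ref{lem:est2} controls the differences of $\hat{\mb L}_{a(\sigma)}\Phi(\sigma)$, $\mb N_{a(\sigma)}(\Phi(\sigma))$, and $\partial_\sigma\Psi_{a(\sigma)}$; the $\mb P_{a_\infty}$ cancellation and the modulation identity on $\rg\mb Q_{a_\infty}$ carry over verbatim. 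The upshot is $\|\mc K(\Phi)-\mc K(\Psi)\|_{\mc X}\lesssim\delta\big(\|\Phi-\Psi\|_{\mc X}+\|a-b\|_X\big)\lesssim\delta\|\Phi-\Psi\|_{\mc X}$, so shrinking $\delta$ makes $\mc K$ a contraction on the complete metric space $\mc X_\delta$. The Banach fixed point theorem then yields a unique $\Phi\in\mc X_\delta$; setting $a=a(\Phi)\in X_\delta$ completes the proof.

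I expect the main obstacle to be organizational rather than deep: throughout, the semigroup $\mb S_{a_\infty}$, the spectral projections, and the functions $\mb h_{a,k}$ all depend on $a_\infty=a(\Phi)_\infty$, so the linear machinery itself varies with the unknown, and one must be careful that the Lipschitz dependence supplied by Lemmas~\ref{lem:Lip} and \ref{lem:est2} is strong enough to close both the self-mapping and the contraction estimates simultaneously with the implicit modulation map of Lemma~\ref{lem:a}. The one genuinely substantive mechanism is that the correction term $\mb C$ is tailored precisely to annihilate the $e^\tau$-growth on $\rg\mb P_{a_\infty}$, turning the forward Duhamel integral into a convergent backward integral; everything else is bookkeeping with the estimates of Lemmas~\ref{lem:est1}, \ref{lem:est2}, \ref{lem:Lip}, and \ref{lem:a}.
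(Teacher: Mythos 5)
Your proposal is correct and follows essentially the same route as the paper: the nested fixed point via Lemma~\ref{lem:a}, the decomposition by $\mb P_{a_\infty}$, $\mb Q_{a_\infty}$, and $\tilde{\mb P}_{a_\infty}$, the backward-integral cancellation produced by $\mb C$ on $\rg\mb P_{a_\infty}$, the collapse of the $\mb Q_{a_\infty}$ component to $\chi(\tau)\mb Q_{a_\infty}\mb u$ via the modulation equation, and the final contraction estimate combining Lemmas~\ref{lem:Lip}, \ref{lem:est1}, \ref{lem:est2}, and \ref{lem:a}. Your small observation that $\tilde{\mb P}_{a_\infty}\mb C=\mb 0$ is a slight simplification, but it does not change the structure of the argument.
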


\begin{proof}
We denote the right-hand side of Eq.~\eqref{eq:modified} by $\mb K(\Phi,a,\mb u)(\tau)$.
We claim that, for sufficiently small $\delta>0$, 
$\Phi\in \mc X_\delta$ implies $\mb K(\Phi,a,\mb u)\in \mc X_\delta$, where
$a \in X_\delta$ is associated to $\Phi$ via Lemma \ref{lem:a}.
To prove this, we first consider 
\begin{align*} 
\mb P_{a_\infty}\mb K(\Phi,a,\mb u)(\tau)&=-\int_\tau^\infty e^{\tau-\sigma}
\mb P_{a_\infty}\Big  [\hat{\mb L}_{a(\sigma)}\Phi(\sigma) \\
&\qquad +\mb N_{a(\sigma)}(\Phi(\sigma)) 
-\partial_\sigma \Psi_{a(\sigma)} \Big ]d\sigma. 
\end{align*}
Lemma \ref{lem:est1} yields
\[ \|\mb P_{a_\infty}\mb K(\Phi,a,\mb u)(\tau)\|\lesssim \delta^2
\int_\tau^\infty e^{\tau-\sigma}e^{-2\omega_p\sigma}d\sigma\lesssim \delta^2 e^{-2\omega_p\tau} \]
and we obtain $\mb P_{a_\infty}\mb K(\Phi,a,\mb u)\in \mc X_{\delta/4}$
if $\delta$ is sufficiently small.
 Next, by construction of $a$, we have
 \begin{align*} 
 \mb Q_{a_\infty}\mb K(\Phi,a,\mb u)(\tau)&=\chi(\tau)\mb Q_{a_\infty}\mb u
 -\mb Q_{a_\infty}\mb C(\Phi,a,\mb u)
=\chi(\tau)\mb Q_{a_\infty}\mb u
 \end{align*}
where we have used the fact that $\mb C(\Phi,a,\mb u)\in \rg \mb P_{a_\infty}$.
 This yields
 \begin{equation}
 \label{eq:QK} \|\mb Q_{a_\infty}\mb K(\Phi,a,\mb u)(\tau)\|
 \lesssim \tfrac{\delta}{c} e^{-2\omega_p\tau}
\end{equation}
which implies $\mb Q_{a_\infty}\mb K(\Phi,a,\mb u)\in \mc X_{\delta/4}$ provided
$c$ is sufficiently large.
In summary, we obtain $\|[\mb P_{a_\infty}+\mb Q_{a_\infty}]\mb K(\Phi,a,\mb u)\|\in 
\mc X_{\delta/2}$.
Finally, we consider 
\begin{align*}
[\mb I&-\mb P_{a_\infty}-\mb Q_{a_\infty}]\mb K(\Phi,a,\mb u)(\tau) \\
&=\mb S_{a_\infty}(\tau)\tilde{\mb P}_{a_\infty}[\mb u-\mb C(\Phi,a,\mb u)] \\
&\quad +\int_0^\tau \mb S_{a_\infty}(\tau-\sigma)\tilde{\mb P}_{a_\infty}
\Big [\hat{\mb L}_{a(\sigma)}\Phi(\sigma)+\mb N_{a(\sigma)}(\Phi(\sigma)) 
-\partial_\sigma \Psi_{a(\sigma)} \Big ]d\sigma.
\end{align*}
From Lemma \ref{lem:est1} we infer
\[ \|\mb C(\Phi,a,\mb u)\|\lesssim 
\tfrac{\delta}{c}+\delta^2 \int_0^\tau e^{-\sigma}e^{-2\omega_p\sigma}
d\sigma\lesssim \tfrac{\delta}{c}+\delta^2 \]
and Proposition \ref{prop:linear} and Lemma \ref{lem:est1} yield
\begin{align*} 
\|[\mb I&-\mb P_{a_\infty}-\mb Q_{a_\infty}]\mb K(\Phi,a,\mb u)(\tau)\| \\
&\lesssim (\tfrac{\delta}{c}+\delta^2) e^{-\omega_p\tau}+\delta^2 \int_0^\tau e^{-\omega_p(\tau-\sigma)}
e^{-2\omega_p\sigma}d\sigma\lesssim (\tfrac{\delta}{c}+\delta^2) e^{-\omega_p\tau}.
\end{align*}
Consequently, we see that $\Phi\in \mc X_\delta$ implies 
$\mb K(\Phi,a,\mb u)\in \mc X_\delta$,
as claimed.

Next, we claim that
\begin{equation}
\label{eq:K-K} \|\mb K(\Phi,a,\mb u)-\mb K(\Psi,b,\mb u)\|_{\mc X}
\lesssim \delta \|\Phi-\Psi\|_{\mc X} 
\end{equation}
for $\Phi,\Psi\in \mc X_\delta$ where $b \in X_\delta$ is associated to $\Psi$ via
Lemma \ref{lem:a}.
Indeed, from Lemma \ref{lem:est2} we infer
\begin{align}
\label{eq:PK-PK}
\|&\mb P_{a_\infty}\mb K(\Phi,a,\mb u)(\tau)-\mb P_{b_\infty}\mb K(\Psi,b,\mb u)(\tau)\| \nonumber \\
&\lesssim \delta \int_\tau^\infty e^{\tau-\sigma}e^{-2\omega_p\sigma}\|\Phi-\Psi\|_{\mc X} d\sigma
\lesssim \delta e^{-2\omega_p\tau}\|\Phi-\Psi\|_{\mc X}
\end{align}
where we have used $\|a-b\|_X\lesssim \|\Phi-\Psi\|_{\mc X}$ (Lemma \ref{lem:a}).
Furthermore,
\begin{align}
\label{eq:QK-QK}
\|&\mb Q_{a_\infty}\mb K(\Phi,a,\mb u)(\tau)
-\mb Q_{b_\infty}\mb K(\Psi,b,\mb u)(\tau)\| \nonumber \\
&=|\chi(\tau)|\|\mb Q_{a_\infty}\mb u-\mb Q_{b_\infty}\mb u\|
\lesssim \delta e^{-2\omega_p\tau}\|a-b\|_X \nonumber \\
&\lesssim \delta e^{-2\omega_p\tau}\|\Phi-\Psi\|_{\mc X}
\end{align}
by Lemmas \ref{lem:Lip} and \ref{lem:a}.
Finally,
\begin{align*}
\|\mb C(\Phi,a,\mb u)-\mb C(\Psi,b,\mb u)\|&\lesssim \delta \|\Phi-\Psi\|_{\mc X} \\
&\quad +\delta \int_0^\infty e^{-\sigma -2\omega_p\sigma}\|\Phi-\Psi\|_{\mc X}d\sigma \\
&\lesssim \delta \|\Phi-\Psi\|_{\mc X}
\end{align*}
and thus, by Lemmas \ref{lem:Lip} and \ref{lem:est2},
\begin{align}
\label{eq:tPK-tPK}
\|&\tilde{\mb P}_{a_\infty}\mb K(\Phi,a,\mb u)(\tau)
-\tilde{\mb P}_{b_\infty}\mb K(\Psi,b,\mb u)(\tau)\| \nonumber\\
&\lesssim \delta e^{-\omega_p\tau}\|\Phi-\Psi\|_{\mc X}
+\delta \int_0^\tau e^{-\omega_p(\tau-\sigma)}e^{-2\omega_p\sigma}
\|\Phi-\Psi\|_{\mc X}\,d\sigma \nonumber\\
&\lesssim \delta e^{-\omega_p\tau}\|\Phi-\Psi\|_{\mc X}.
\end{align}
By putting together Eqs.~\eqref{eq:PK-PK}, \eqref{eq:QK-QK}, and \eqref{eq:tPK-tPK}, we obtain
Eq.~\eqref{eq:K-K}.
Thus, the assertion is a consequence of the contraction mapping principle.
\end{proof}

\subsection{Variation of blowup time}
Our actual intention is to solve Eq.~\eqref{eq:modified} without the correction term
$\mb C(\Phi,a,\mb u)$.
So far, we can do this only in the trivial case where $\mb u=\mb 0$ (with the solution
$a(\tau)=0$, $\Phi(\tau)=\mb 0$).
The instability which is suppressed by the correction term $\mb C(\Phi,a,\mb u)$ is related
to the time-translation symmetry of the equation, i.e., to the choice of the blowup time
$T$.
Although the blowup time $T$ does not appear explicitly in the equation, it does show up in the
initial data.
Recall from
Eq.~\eqref{eq:data} and Eq.~\eqref{eq:modansatz} that the data we prescribe are 
of the form
\begin{align*} \Phi(0)(\xi)&=\Psi(0)(\xi)-\Psi_{a(0)}(\xi) \\
&=\left (\begin{array}{c}
T^{\frac{2}{p-1}}[\psi_{0,1}(T\xi)+\tilde f(T\xi)] \\
T^{\frac{p+1}{p-1}}[\psi_{0,2}(T\xi)+\tilde g(T\xi)] \end{array} \right )-\Psi_0(\xi) 
\end{align*}
for some fixed, given functions $\tilde f,\tilde g$.
For any $\mb v\in \mc H$ we define the rescaling
\[ \mb v^T(\xi):=\left (\begin{array}{c}
T^{\frac{2}{p-1}}v_1(T\xi)\\
T^{\frac{p+1}{p-1}}v_2(T\xi) \end{array} \right ) \]
and set
\[ \mb U(T,\mb v):=\mb v^T+\Psi_0^T-\Psi_0. \]
For $\mb v=(\tilde f,\tilde g)$ we may then rewrite the initial data as
\begin{equation}
\label{eq:datavec}
\Phi(0)=\mb U(T,\mb v).
\end{equation}
The advantage of this notation is that the profile $\mb v$ is independent of $T$.
Thus, the dependence of the data on the fixed profile $\mb v$ on the one hand, and the
blowup time $T$ on the other hand is now clearly separated.

We will show that $T$ can be chosen in such a way that the correction term vanishes.
As a preparation we need the following technical result.
For $U\subset \R^3$ we set
\[ \mc H(U):=H^2(U)\times H^1(U). \]

\begin{lemma}
\label{lem:U}
Let $\delta>0$ be sufficiently small.
If $\mb v\in \mc H(\B^3_{1+\delta})$ satisfies
$\|\mb v\|_{\mc H(\B^3_{1+\delta})}\leq \delta$ then we have
\[ \|\mb U(T,\mb v)\|\lesssim \delta \]
for all $T\in [1-\delta,1+\delta]$.
Furthermore, the map $T\mapsto \mb U(T,\mb v): [1-\delta,1+\delta]\to \mc H$ is 
continuous.
\end{lemma}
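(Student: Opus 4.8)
The plan is to estimate $\|\mb U(T,\mb v)\|$ by decomposing $\mb U(T,\mb v)=\mb v^T+(\Psi_0^T-\Psi_0)$ and handling the two summands separately, and then to obtain continuity in $T$ by a density argument that reduces the problem to smooth profiles.

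First I would bound the rescaling operator $\mb v\mapsto \mb v^T$. Since $\|\cdot\|\simeq\|\cdot\|_{H^2\times H^1(\B^3)}$ by Lemma \ref{lem:ip}, it suffices to control $\|v_1(T\,\cdot)\|_{H^2(\B^3)}$ and $\|v_2(T\,\cdot)\|_{H^1(\B^3)}$. A change of variables $\eta=T\xi$ shows that $\|v_1(T\,\cdot)\|_{H^2(\B^3)}^2=\sum_{|\alpha|\le2}T^{2|\alpha|-3}\|\partial^\alpha v_1\|_{L^2(\B^3_T)}^2$, and similarly for $v_2$; since $T\in[1-\delta,1+\delta]$ all these prefactors are bounded above and below, so $\|\mb v^T\|\lesssim \|\mb v\|_{\mc H(\B^3_T)}\le\|\mb v\|_{\mc H(\B^3_{1+\delta})}\le\delta$ (this is where I use $\B^3_T\subset\B^3_{1+\delta}$). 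For the second summand, note $\Psi_0$ is a \emph{fixed} smooth function (the static profile at $a=0$, which is in fact constant), so $\xi\mapsto \Psi_0^T(\xi)-\Psi_0(\xi)$ is smooth in $\xi$ for each $T$ near $1$ and depends smoothly on $T$; since it vanishes identically at $T=1$, a mean value estimate in $T$ gives $\|\Psi_0^T-\Psi_0\|\lesssim|T-1|\le\delta$. Adding the two bounds yields $\|\mb U(T,\mb v)\|\lesssim\delta$.

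For continuity of $T\mapsto\mb U(T,\mb v)$ the term $\Psi_0^T-\Psi_0$ is clearly continuous (even $C^\infty$) in $T$ by the smoothness just mentioned, so the issue is continuity of $T\mapsto\mb v^T$ on $\mc H$. For $\mb v\in C^\infty(\overline{\B^3_{1+\delta}})^2$ this is immediate: the difference quotient $\frac{1}{T-T'}(\mb v^T-\mb v^{T'})$ converges uniformly on $\overline{\B^3}$ together with all derivatives up to the relevant order, so $T\mapsto\mb v^T$ is even differentiable into $\mc H$. For general $\mb v\in\mc H(\B^3_{1+\delta})$ I would approximate: given $\varepsilon>0$ pick $\mb w\in C^\infty(\overline{\B^3_{1+\delta}})^2$ with $\|\mb v-\mb w\|_{\mc H(\B^3_{1+\delta})}<\varepsilon$, and use the uniform-in-$T$ operator bound $\|(\mb v-\mb w)^T\|\lesssim\|\mb v-\mb w\|_{\mc H(\B^3_{1+\delta})}$ from the previous paragraph together with the continuity of $T\mapsto\mb w^T$ to conclude that $T\mapsto\mb v^T$ is continuous (a standard $3\varepsilon$-argument).

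The main obstacle here is really just bookkeeping: one has to be careful that the rescaling $\mb v\mapsto\mb v^T$ only needs the profile on $\B^3_T\subseteq\B^3_{1+\delta}$, so that the hypothesis $\|\mb v\|_{\mc H(\B^3_{1+\delta})}\le\delta$ genuinely controls $\|\mb v^T\|_{\mc H(\B^3)}$ uniformly for $T\in[1-\delta,1+\delta]$; and that the factors $T^{\frac{2}{p-1}}$, $T^{\frac{p+1}{p-1}}$, and the Jacobian powers $T^{2|\alpha|-3}$ are all harmless because $T$ stays in a fixed compact interval bounded away from $0$. None of these steps is deep; the only mild subtlety is the density argument for continuity at non-smooth $\mb v$, which is handled exactly as in the classical proof that dilations act strongly continuously on $L^2$.
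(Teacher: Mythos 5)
Your proof is correct and follows essentially the same route as the paper: decompose $\mb U(T,\mb v)=\mb v^T+(\Psi_0^T-\Psi_0)$, bound $\mb v^T$ by a change of variables using $\B^3_T\subset\B^3_{1+\delta}$, bound $\Psi_0^T-\Psi_0$ by smoothness (indeed constancy) of $\Psi_0$ and a mean-value estimate in $T$, and obtain continuity of $T\mapsto\mb v^T$ by a density/$3\varepsilon$ argument against the uniform operator bound. The only cosmetic difference is that you emphasize $\Psi_0$ is literally constant, whereas the paper only invokes $\psi_{0,1}\in C^\infty(\R^3)$; both suffice.
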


\begin{proof}
By definition, we have
\[ [\mb U(T,\mb v)]_1(\xi)=T^\frac{2}{p-1}v_1(T\xi)
+T^{\frac{2}{p-1}}\psi_{0,1}(T\xi)-\psi_{0,1}(\xi). \]
Since $\psi_{0,1}\in C^\infty(\R^3)$, the fundamental theorem of calculus implies 
$\|T^\frac{2}{p-1}\psi_{0,1}(T\,\cdot)-
\psi_{0,1}\|_{H^2(\B^3)}\lesssim |T-1|$. Consequently, we infer
\begin{align*} 
\|&T^\frac{2}{p-1}v_1(T\,\cdot)+T^\frac{2}{p-1}\psi_{0,1}(T\,\cdot)-\psi_{0,1}\|_{H^2(\B^3)}  \\
&\lesssim T^\frac{2}{p-1}\|v_1(T\,\cdot)\|_{H^2(\B^3)}+|T-1| \\
&\lesssim \|v_1\|_{H^2(\B^3_T)}+\delta \\
&\lesssim \delta
\end{align*}
for all $T\in [1-\delta,1+\delta]$.
The same argument can be used for the second component of $\mb U(T,\mb v)$ and we obtain
$\|\mb U(T,\mb v)\|\lesssim \delta$, as claimed.
The second statement is a consequence of the continuity of the map 
$T\mapsto \|f(T\,\cdot)\|_{H^k(\B^3)}$ for (fixed) $f\in H^k(\B^3_{1+\delta})$.
The latter follows easily by the triangle inequality and an approximation argument
using the density of $C^\infty(\overline{\B^3_{1+\delta}})$ in $H^k(\B^3_{1+\delta})$.
\end{proof}

From this result we immediately obtain the following corollary.

\begin{corollary}
\label{cor:solmodT}
Let $c>0$ be sufficiently large and choose $\delta>0$ sufficiently small. 
Suppose $\|\mb v\|_{\mc H(\B^3_{1+\delta/c})}\leq \frac{\delta}{c}$ and $T\in [1-\frac{\delta}{c},
1+\frac{\delta}{c}]$.
Then, Eq.~\eqref{eq:modified} with
$\mb u$ replaced by $\mb U(T,\mb v)$, i.e., 
\begin{align}
\label{eq:modifiedU}
\Phi(\tau)=&\mb S_{a_\infty}(\tau)[\mb U(T,\mb v)-\mb C(\Phi, a, \mb U(T, \mb v))] \nonumber \\
&+\int_0^\tau \mb S_{a_\infty}(\tau-\sigma) 
\Big [\hat{\mb L}_{a(\sigma)}\Phi(\sigma)+\mb N_{a(\sigma)}(\Phi(\sigma)) 
-\partial_\sigma \Psi_{a(\sigma)} \Big ]d\sigma,
\end{align}
has a solution $(\Phi,a)\in \mc X_\delta\times X_\delta$. Furthermore, the map
$T\mapsto (\Phi,a): [1-\frac{\delta}{c},1+\frac{\delta}{c}]\to \mc X\times X$ is
continuous.
\end{corollary}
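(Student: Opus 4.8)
The plan is to deduce the statement from Proposition~\ref{prop:modified} with $\mb u=\mb U(T,\mb v)$, after adjusting the constants so that the smallness hypothesis of Proposition~\ref{prop:modified} is met, and then to upgrade the contraction estimates already established there and in Lemma~\ref{lem:a} to Lipschitz dependence on the datum. For the existence part, let $C_0$ be the implicit constant in Lemma~\ref{lem:U} and let $c_\ast$ be the threshold beyond which Proposition~\ref{prop:modified} applies. Choose the corollary's constant $c\ge C_0 c_\ast$ and $\delta$ correspondingly small. Instantiating Lemma~\ref{lem:U} with its parameter taken to be $\delta/c$ (admissible once $\delta/c$ is small) on the hypotheses $\|\mb v\|_{\mc H(\B^3_{1+\delta/c})}\le\delta/c$ and $T\in[1-\delta/c,1+\delta/c]$ gives $\|\mb U(T,\mb v)\|\le C_0\delta/c=\delta/(c/C_0)$. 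Since $c/C_0\ge c_\ast$, Proposition~\ref{prop:modified} applied with parameter $c/C_0$ (and $\delta$ as the small parameter) produces a unique $(\Phi,a)\in\mc X_\delta\times X_\delta$ solving Eq.~\eqref{eq:modifiedU}.

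For the continuity statement, recall that $(\Phi,a)$ arises as follows: $\Phi$ is the fixed point in $\mc X_\delta$ of the map $\Phi\mapsto\widehat{\mb K}(\Phi,\mb u):=\mb K(\Phi,a(\Phi,\mb u),\mb u)$, where $\mb K$ denotes the right-hand side of Eq.~\eqref{eq:modified} and $a(\Phi,\mb u)\in X_\delta$ is the function produced by Lemma~\ref{lem:a}, and then $a=a(\Phi,\mb u)$. By estimate~\eqref{eq:K-K} the map $\widehat{\mb K}(\cdot,\mb u)$ is a contraction on $\mc X_\delta$ with constant $\le\tfrac12$, uniformly in $\mb u$ with $\|\mb u\|\le\delta/c$. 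Hence, writing $\Phi_{\mb u},\Phi_{\mb u'}$ for the corresponding fixed points,
\[
\|\Phi_{\mb u}-\Phi_{\mb u'}\|_{\mc X}\le\|\widehat{\mb K}(\Phi_{\mb u},\mb u)-\widehat{\mb K}(\Phi_{\mb u},\mb u')\|_{\mc X}+\tfrac12\|\Phi_{\mb u}-\Phi_{\mb u'}\|_{\mc X},
\]
so it suffices to show that $\mb u\mapsto\widehat{\mb K}(\Phi,\mb u)$ is Lipschitz, uniformly for $\Phi\in\mc X_\delta$; then $\|\Phi_{\mb u}-\Phi_{\mb u'}\|_{\mc X}\lesssim\|\mb u-\mb u'\|$, and the analogous bound for $a$ follows from the ($\mb u$-)Lipschitz dependence of $a(\cdot,\cdot)$. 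Composing with the continuity of $T\mapsto\mb U(T,\mb v)$ from Lemma~\ref{lem:U} then yields $T\mapsto(\Phi,a)$ continuous, indeed locally Lipschitz.

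The remaining point — and the only real work — is to verify that $\widehat{\mb K}(\Phi,\mb u)$ and $a(\Phi,\mb u)$ are Lipschitz in $\mb u$. This is done by revisiting the proofs of Lemma~\ref{lem:a} and Proposition~\ref{prop:modified}: the datum $\mb u$ enters only through the bounded linear terms $\mb S_{a_\infty}(\tau)\mb u$, $[1-\chi(\tau)]\mb Q_{a_\infty,j}\mb u$, $\chi(\tau)\mb Q_{a_\infty}\mb u$, and $\mb P_{a_\infty}\mb u$ (the latter inside $\mb C(\Phi,a,\mb u)$), together with the dependence of the spectral projections $\mb P_{a_\infty},\mb Q_{a_\infty}$ and of $\mb S_{a_\infty}$ on $a_\infty=a_\infty(\mb u)$. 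The first group contributes, upon varying $\mb u$, terms $\lesssim\|\mb u-\mb u'\|$, while the second is controlled by Lemma~\ref{lem:Lip} exactly as the terms involving $\|a-b\|_X$ were handled in~\eqref{eq:PK-PK}, \eqref{eq:QK-QK}, \eqref{eq:tPK-tPK} and in the proof of Lemma~\ref{lem:a}. Closing the resulting coupled inequalities for $\|\Phi_{\mb u}-\Phi_{\mb u'}\|_{\mc X}$ and $\|a_{\mb u}-a_{\mb u'}\|_X$ — the coupling being the same benign one already treated there, and absorbable for $\delta$ small — gives the Lipschitz bounds, and hence the asserted continuity.
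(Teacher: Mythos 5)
Your proposal is correct and follows the route the paper intends: the paper offers no explicit argument beyond "From this result we immediately obtain the following corollary," meaning existence comes from Lemma~\ref{lem:U} feeding into Proposition~\ref{prop:modified} (after rescaling constants exactly as you do), and continuity in $T$ comes from composing the continuity of $T\mapsto\mb U(T,\mb v)$ with continuous dependence of the fixed point on the datum $\mb u$. Your careful treatment of the latter step — observing that the contraction constant $\lesssim\delta$ from Eq.~\eqref{eq:K-K} is uniform in $\mb u$, that the datum enters linearly except through $a_\infty(\mb u)$, and that the resulting coupled Lipschitz estimates for $\Phi$ and $a$ close by absorption for $\delta$ small — fills in precisely what the paper suppresses.
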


Next, we will show that the equation $\mb C(\Phi,a,\mb U(T,\mb v))=\mb 0$ is equivalent
to a fixed point problem of the form
$T-1=F(T)$ where $F$ is continuous and satisfies $|F(T)|\leq \frac{\delta}{c}$.

\begin{lemma}
\label{lem:modifiedU}
Let $c>0$ be sufficiently large and choose $\delta>0$ sufficiently small. 
Suppose $\|\mb v\|_{\mc H(\B^3_{1+\delta/c})}\leq \frac{\delta}{c^2}$.
Then there exist $T\in [1-\frac{\delta}{c},1+\frac{\delta}{c}]$ and functions
$(\Phi,a)\in \mc X_\delta \times X_\delta$ such that Eq.~\eqref{eq:modifiedU}
is satisfied with $\mb C(\Phi,a,\mb U(T,\mb v))=\mb 0$.
\end{lemma}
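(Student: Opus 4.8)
~The plan is to reduce the vanishing condition $\mb C(\Phi,a,\mb U(T,\mb v))=\mb 0$ to a one-dimensional fixed point problem in $T$ and then apply a Brouwer-type argument (here, simply the intermediate value theorem on an interval). First I would observe that since $\rg \mb P_{a_\infty}=\langle \mb g_{a_\infty}\rangle$ is one-dimensional and $\mb C(\Phi,a,\mb U(T,\mb v))\in \rg \mb P_{a_\infty}$ always, the vector equation $\mb C=\mb 0$ is equivalent to a single scalar equation obtained by pairing with $\mb g_{a_\infty}$ (or, more precisely, with a suitable dual functional). Writing $(\Phi,a)=(\Phi_T,a_T)$ for the solution of Eq.~\eqref{eq:modifiedU} furnished by Corollary \ref{cor:solmodT}, the scalar quantity
\[ c(T):=\|\mb g_{a_{T,\infty}}\|^{-2}\big(\mb C(\Phi_T,a_T,\mb U(T,\mb v))\big|\mb g_{a_{T,\infty}}\big) \]
is, by the continuity statement in Corollary \ref{cor:solmodT}, a continuous function of $T\in [1-\tfrac{\delta}{c},1+\tfrac{\delta}{c}]$, and we need to show $c(T)=0$ for some such $T$.

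The heart of the matter is to extract from $\mb C$ the leading-order dependence on $T$. Recalling that $\mb U(T,\mb v)=\mb v^T+\Psi_0^T-\Psi_0$ with $\mb v$ small, the dominant term in $\mb C(\Phi,a,\mb U(T,\mb v))$ is $\mb P_{a_\infty}\mb U(T,\mb v)$, and among the pieces of $\mb U(T,\mb v)$ the one carrying the ``unstable'' content is $\Psi_0^T-\Psi_0$. A direct computation (Taylor expansion in $T$ about $T=1$, using $\partial_T[\Psi_0^T]|_{T=1}=\xi^j\partial_j\Psi_0+c\,\Psi_0$ for appropriate constants, cf.~the derivation of $\mb g_0$ in Lemma \ref{lem:spec0}) should show
\[ \Psi_0^T-\Psi_0=(T-1)\,\gamma\,\mb g_0+O(|T-1|^2)+(\text{terms annihilated by }\mb P_0) \]
for some nonzero constant $\gamma$; this is precisely the statement that the time-translation direction is exactly the eigendirection $\mb g_0$, which is why $\mb g_0$ appeared as the generator of time translations in the outline. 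Hence
\[ c(T)=\gamma'(T-1)+R(T), \]
where $\gamma'\neq 0$ and the remainder $R(T)$ collects: the contribution of $\mb v^T$ (bounded by $\lesssim \tfrac{\delta}{c^2}$ uniformly in $T$, by Lemma \ref{lem:U} refined to the smaller ball), the quadratic-in-$(T-1)$ terms ($\lesssim (\tfrac{\delta}{c})^2$), the contribution of the integral term in $\mb C$ (which by Lemma \ref{lem:est1} is $\lesssim \delta^2$), and the $a_\infty$-dependence of $\mb P_{a_\infty}$ versus $\mb P_0$ and of $\mb g_{a_\infty}$ versus $\mb g_0$ (which by Lemma \ref{lem:Lip} costs a factor $|a_\infty|\lesssim \delta$ times the size of $\mb C$ itself). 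The upshot is a bound $|R(T)|\le C(\tfrac{\delta}{c^2}+\delta^2+\delta|T-1|)$. After dividing by $\gamma'$, the condition $c(T)=0$ becomes $T-1=F(T)$ with $F$ continuous on $[1-\tfrac{\delta}{c},1+\tfrac{\delta}{c}]$ and $|F(T)|\le C'(\tfrac{\delta}{c^2}+\delta^2+\tfrac{\delta^2}{c})\le \tfrac{\delta}{c}$, provided $c$ is large and $\delta$ small. Then $G(T):=1+F(T)$ maps the interval $[1-\tfrac{\delta}{c},1+\tfrac{\delta}{c}]$ into itself continuously, so by the intermediate value theorem applied to $T\mapsto T-G(T)$ (which is $\le 0$ at the left endpoint and $\ge 0$ at the right endpoint) there is a fixed point $T_*\in[1-\tfrac{\delta}{c},1+\tfrac{\delta}{c}]$ with $c(T_*)=0$, i.e.~$\mb C(\Phi_{T_*},a_{T_*},\mb U(T_*,\mb v))=\mb 0$, which is the assertion.

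The step I expect to be the main obstacle is making the expansion $c(T)=\gamma'(T-1)+R(T)$ rigorous with $\gamma'$ genuinely bounded away from zero and uniform in the (small) parameters. Two subtleties need care: first, that the pairing of $\mb P_0 (\Psi_0^T-\Psi_0)$ with $\mb g_0$ really produces a nonvanishing linear coefficient — this relies on the fact that the Riesz projection $\mb P_0$ is not orthogonal (we are in a nonself-adjoint setting), so one must identify the correct left eigenfunctional and check it does not annihilate $\partial_T[\Psi_0^T]|_{T=1}$; second, that the implicitly defined solution $(\Phi_T,a_T)$ does not contribute a hidden linear-in-$(T-1)$ term that could cancel $\gamma'(T-1)$ — this is controlled because that contribution enters $\mb C$ only through the integral term, which Lemma \ref{lem:est1} bounds by $\delta^2$ (hence is absorbed into $R$, as it is quadratically small in $\delta$ and does not see the $(T-1)$ scale to leading order). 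Once the sign and size of $\gamma'$ are pinned down, the rest is the routine continuity-plus-IVT packaging described above.
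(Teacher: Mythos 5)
Your approach matches the paper's: both reduce $\mb C=\mb 0$ to the scalar condition $(\mb C(\Phi,a,\mb U(T,\mb v))|\mb g_{a_\infty})=0$, extract the linear-in-$(T-1)$ term from the elementary identity $\partial_T\Psi_0^T|_{T=1}=\kappa_p\mb g_0$ (which is immediate because $\psi_{0,1}$ and $\psi_{0,2}$ are constants), absorb the $\mb v^T$ contribution, the quadratic-in-$(T-1)$ remainder, the integral term bounded by Lemma \ref{lem:est1}, and the $\mb g_0\mapsto\mb g_{a_\infty}$ replacement into an $O(\delta/c^2)$ error, and close with a fixed-point argument for a continuous self-map of $[1-\delta/c,1+\delta/c]$. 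Your worry about identifying the correct left eigenfunctional is a non-issue in this concrete setting: since $\mb C\in\rg\mb P_{a_\infty}=\langle\mb g_{a_\infty}\rangle$, the ordinary inner product with $\mb g_{a_\infty}$ already detects vanishing, and since $\mb P_{a_\infty}\mb g_{a_\infty}=\mb g_{a_\infty}$, the linear coefficient is $\kappa_p\|\mb g_{a_\infty}\|^2\neq 0$ with no cancellation coming from nonorthogonality of the Riesz projection.
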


\begin{proof}
We first note that 
\[ \partial_T \Psi_0^T(\xi)|_{T=1}=\kappa_p \mb g_0(\xi) \]
for a suitable constant $\kappa_p$.
Thus, we may write 
\[ \mb U(T,\mb v)=\mb v^T+(T-1)\kappa_p\mb g_0+(T-1)^2 \mb f_T \]
where $\|\mb f_T\|\lesssim 1$ for all $T\in [1-\frac{\delta}{c},1+\frac{\delta}{c}]$.
Let $(\Phi,a)\in \mc X_\delta \times X_\delta$ be the functions associated to $T$
via Corollary \ref{cor:solmodT}.
We write
\[ \mb U(T,\mb v)=\mb v^T+(T-1)\kappa_p \mb g_{a_\infty}+(T-1)\kappa_p (\mb g_0-\mb g_{a_\infty})
+(T-1)^2 \mb f_T. \]
Since $|a_\infty-0|=|a_\infty-a(0)|\lesssim \delta$, we infer
$\|\mb g_0-\mb g_{a_\infty}\|\lesssim \delta$
and this yields
\begin{align*}
(\mb P_{a_\infty}\mb U(T,\mb v)|\mb g_{a_\infty})&=O(\tfrac{\delta}{c^2})+\kappa_p \|\mb g_{a_\infty}\|^2(T-1)
+O(\delta^2) \\
&=\kappa_p \|\mb g_{a_\infty}\|^2(T-1)+O(\tfrac{\delta}{c^2})
\end{align*}
where the $O$-terms are continuous functions of $T$.
Thus, from Lemma \ref{lem:est1} we obtain
\[ (\mb C(\Phi,a,\mb U(T,\mb v))|\mb g_{a_\infty})=
\kappa_p \|\mb g_{a_\infty}\|^2(T-1)+O(\tfrac{\delta}{c^2}) \]
and this shows that $(\mb C(\Phi,a,\mb U(T,\mb v))|\mb g_{a_\infty})=0$ is equivalent
to
\[ T-1=F(T) \]
for a function $F$ which is continuous on $[1-\tfrac{\delta}{c},1+\frac{\delta}{c}]$
and satisfies $|F(T)|\leq \frac{\delta}{c}$, provided $c>0$ is chosen large enough.
Thus, $1+F$ is a continuous self-map of the closed interval 
$[1-\frac{\delta}{c},1+\frac{\delta}{c}]$ and such a map necessarily has a fixed point.
Consequently, there exists a $T \in [1-\frac{\delta}{c},1+\frac{\delta}{c}]$ 
such that $(\mb C(\Phi,a,\mb U(T,\mb v))|\mb g_{a_\infty})=0$ and, since
$\mb C(\Phi,a,\mb U(T,\mb v))\in \langle \mb g_{a_\infty}\rangle$, we infer
$\mb C(\Phi,a,\mb U(T,\mb v))=\mb 0$, as claimed.
\end{proof}

\subsection{Proof of Theorem \ref{thm:main}}
Theorem \ref{thm:main} is now a consequence of Lemma \ref{lem:modifiedU}.
All we have to do is translate back the statement of Lemma \ref{lem:modifiedU} to
the original setting. More precisely, let $\delta,c$ be such that Lemma \ref{lem:modifiedU}
holds, and set $\delta':=\delta/c$.
Suppose the data $(\tilde f,\tilde g)\in H^2\times H^1(\B^3_{1+\delta'})$ satisfy
$\|(\tilde f,\tilde g)\|_{H^2\times H^1(\B_{1+\delta'}^3)}\leq \tfrac{\delta'}{c}$.
Set $\mb v:=(\tilde f,\tilde g)$.
Then we have
\[ \|\mb v\|_{\mc H(\B^3_{1+\delta/c})}
=\|(\tilde f,\tilde g)\|_{H^2\times H^1(\B^3_{1+\delta'})}\leq 
\tfrac{\delta}{c^2} \]
and by Lemma \ref{lem:modifiedU} we obtain a $T \in [1-\delta',1+\delta']$
and functions $(\Phi,a)\in \mc X_{c\delta'}\times X_{c\delta'}$ that solve
Eq.~\eqref{eq:Phiweak} with data $\Phi(0)=\mb U(T,\mb v)$.
This means that $\Psi(\tau):=\Psi_{a(\tau)}+\Phi(\tau)$ is a solution (in the Duhamel
sense) of Eq.~\eqref{eq:Psi} with data $\Psi(0)=\Psi_0+\mb U(T, \mb v)$.
Consequently, 
\[ u(t,x)=(T-t)^{-\frac{2}{p-1}}\psi_1\left (-\log(T-t)+\log T,\tfrac{x}{T-t}\right ) \]
solves the original wave equation \eqref{eq:main} with data
\begin{align*}
 u(0,x)&=T^{-\frac{2}{p-1}}\psi_1\left (0,\tfrac{x}{T}\right )
 =\psi_{0,1}(x)+\tilde f(x)=u_{1,0}(0,x)+\tilde f(x)  \\
 \partial_0 u(0,x)&=T^{-\frac{p+1}{p-1}}\psi_2\left (0,\tfrac{x}{T}\right )
 =\psi_{0,2}(x)+\tilde g(x)=\partial_0 u_{1,0}(0,x)+\tilde g(x)
 \end{align*}
 for $x\in \B_{1+\delta'}^3$.
 We infer
 \begin{align*}
 (T&-t)^{\frac{2}{p-1}+\frac12}\|u(t,\cdot)-u_{T,a_\infty}(t,\cdot)\|_{\dot H^2(\B^3_{T-t})} \\
 &=(T-t)^\frac12\|(\psi_1-\psi_{a_\infty,1})(-\log(T-t)+\log T, \tfrac{\cdot}{T-t})\|_{\dot H^2(\B^3_{T-t})}\\
 &=\|(\psi_1-\psi_{a_\infty,1})(-\log(T-t)+\log T, 
 \cdot)\|_{\dot H^2(\B^3)} \\
 &\lesssim \|(\Psi-\Psi_{a_\infty})(-\log(T-t)+\log T)\| \\
 &\lesssim \|\Phi(-\log(T-t)+\log T)\|+\|\Psi_{a(-\log(T-t)+\log T)}-\Psi_{a_\infty}\| \\
 &\lesssim (T-t)^{\frac{1}{p-1}}
 \end{align*}
 for all $t\in [0,T)$.
 The other bounds follow by scaling.

\appendix
\section{Angular momentum decomposition}
\label{sec:apx}
\noindent For a function\footnote{We restrict our discussion here to $d\geq 3$ since
$d=1$ is trivial and $d=2$ is complex analysis.} 
$f: \S^{d-1}\to \C$ we define its canonical extension
$f^*: \R^d\backslash \{0\} \to \C$ by $f^*(x):=f(\frac{x}{|x|})$.
We say that $f\in C^k(\S^{d-1})$ if $f^*\in C^k(\R^d\backslash\{0\})$.
For $f\in C^1(\S^{d-1})$ we write 
$\slashed\partial_j f:=\partial_j f^*$ 
and call $\slashed\partial_j f$ an \emph{angular
derivative}.
The Laplace-Beltrami operator on $\S^{d-1}$ can then be written as 
$-\slashed \partial^j \slashed \partial_j$ and we have the integration by parts
formula
\[ (\slashed\partial^j\slashed\partial_j f|g)_{\S^{d-1}}=
-(\slashed\partial_j f|\slashed\partial^j g)_{\S^{d-1}} \]
for $f,g\in C^2(\S^{d-1})$, where
\begin{align*}
(f|g)_{\S^{d-1}}&=\int_{\S^{d-1}}f(\omega)\overline{g(\omega)}
d\sigma(\omega) \\
&=\int_{\R^{d-1}}f(\psi(y))\overline{g(\psi(y))}\left (\frac{2}{|y|^2+1}
\right )^{d-1}dy 
\end{align*}
with the stereographic projection $\psi: \R^{d-1}\to \S^{d-1}\backslash
\{e_d\}$,
\[ \psi(y)=\left (\frac{2y}{|y|^2+1},\frac{|y|^2-1}{|y|^2+1}\right ). \]
We denote by $Y_{\ell,m}\in L^2(\S^{d-1})$ 
the usual $L^2(\S^{d-1})$-normalized spherical
harmonics, i.e., $Y_{\ell,m}$ is an eigenfunction of $-\slashed\partial^j
\slashed\partial_j$ with eigenvalue $\ell(\ell+d-2)$, $\ell\in \N_0$,
and $(Y_{\ell,m}|Y_{\ell',m'})_{\S^{d-1}}=\delta_{\ell\ell'}\delta_{mm'}$.
For each $\ell \in \N_0$, the eigenspace associated to the eigenvalue
$\ell(\ell+d-2)$ is finite-dimensional and we denote by 
$\Omega_{d,\ell}\subset \Z$
the set of admissible indices $m$.
The Sobolev space $H^k(\S^{d-1})$, $k\in \N_0$, 
is defined as the completion of
$C^k(\S^{d-1})$ with respect to  
\[ \|f\|_{H^k(\S^{d-1})}:=\sum_{|\alpha|\leq k}
\|\slashed \partial^\alpha f\|_{L^2(\S^{d-1})} \]
with the usual multi-index notation
\[ \alpha \in \N_0^d,\quad |\alpha|:=\sum_{j=1}^d \alpha_j,
\quad\slashed \partial^\alpha:=\slashed\partial_1^{\alpha_1}
\partial_2^{\alpha_2}\dots
\slashed\partial_d^{\alpha_d}. \]
Now we recall the following fundamental expansion result.

\begin{lemma}
\label{lem:exp}
Let $f\in C^\infty(\S^{d-1})$, $d\geq 3$, and set
\[ S_n f(\omega):=\sum_{\ell=0}^n \sum_{m\in \Omega_{d,\ell}}
(f|Y_{\ell,m})_{\S^{d-1}}Y_{\ell,m}(\omega). \]
Then we have
\[ \lim_{n\to \infty}\|S_n f-f\|_{H^k(\S^{d-1})}=0 \]
for any $k\in \N_0$.
\end{lemma}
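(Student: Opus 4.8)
The plan is to deduce the statement from two facts: the expansion coefficients of a smooth function decay faster than any polynomial in $\ell$, and the Sobolev norms of the spherical harmonics grow at most polynomially in $\ell$. Write $\Lambda:=-\slashed\partial^j\slashed\partial_j$ for the (positive) Laplace--Beltrami operator, so that $\Lambda Y_{\ell,m}=\ell(\ell+d-2)Y_{\ell,m}$, and set $c_{\ell,m}:=(f|Y_{\ell,m})_{\S^{d-1}}$. The integration-by-parts identity recorded in the appendix shows that $\Lambda$ is symmetric with respect to $(\cdot|\cdot)_{\S^{d-1}}$ on $C^\infty(\S^{d-1})$; hence, for $\ell\geq 1$ and every $N\in\N$,
\[ [\ell(\ell+d-2)]^N c_{\ell,m}=(f|\Lambda^N Y_{\ell,m})_{\S^{d-1}}=(\Lambda^N f|Y_{\ell,m})_{\S^{d-1}}, \]
and since $\Lambda^N f\in C^\infty(\S^{d-1})\subset L^2(\S^{d-1})$, the Cauchy--Schwarz inequality together with $\|Y_{\ell,m}\|_{L^2(\S^{d-1})}=1$ yields
\[ |c_{\ell,m}|\leq [\ell(\ell+d-2)]^{-N}\,\|\Lambda^N f\|_{L^2(\S^{d-1})}\qquad(\ell\geq 1). \]

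Next I would establish the two structural estimates. The dimension bound $\#\Omega_{d,\ell}\lesssim_d (1+\ell)^{d-2}$ is immediate from the explicit formula for the dimensions of the spaces of spherical harmonics. The bound
\[ \|Y_{\ell,m}\|_{H^k(\S^{d-1})}\lesssim_{k,d} (1+\ell)^k \]
is an elliptic-type estimate on the compact manifold $\S^{d-1}$: one proves, by induction on $k$ and using the appendix identity together with the commutation relations between the $\slashed\partial_j$ and $\Lambda$ (which produce only lower-order corrections), that $\sum_{|\alpha|\leq k}\|\slashed\partial^\alpha g\|_{L^2(\S^{d-1})}^2\lesssim_{k,d} \|(1+\Lambda)^{\lceil k/2\rceil}g\|_{L^2(\S^{d-1})}^2$ for all $g\in C^\infty(\S^{d-1})$; evaluating on the eigenfunction $g=Y_{\ell,m}$ gives the claim. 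I expect this step — controlling all mixed angular derivatives of order $k$ by $\lceil k/2\rceil$ powers of the Laplace--Beltrami operator while keeping track of the lower-order terms in the induction — to be the main obstacle; the remaining ingredients are soft, and one could alternatively invoke the standard elliptic estimate $\|g\|_{H^k(\S^{d-1})}\simeq\|(1+\Lambda)^{k/2}g\|_{L^2(\S^{d-1})}$ on a compact Riemannian manifold.

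Finally I would combine everything. Fix $k\in\N_0$ and choose $N\in\N$ with $2N>k+d-1$. Treating the (constant) $\ell=0$ term separately,
\[ \sum_{\ell=1}^\infty\sum_{m\in\Omega_{d,\ell}}|c_{\ell,m}|\,\|Y_{\ell,m}\|_{H^k(\S^{d-1})}\lesssim_{k,d}\|\Lambda^N f\|_{L^2(\S^{d-1})}\sum_{\ell=1}^\infty (1+\ell)^{d-2+k}[\ell(\ell+d-2)]^{-N}<\infty, \]
so $\sum_{\ell,m}c_{\ell,m}Y_{\ell,m}$ converges absolutely in the Banach space $H^k(\S^{d-1})$; since $S_n f$ is its sequence of partial sums, $S_n f\to g$ in $H^k(\S^{d-1})$ for some $g$. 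As $\S^{d-1}$ has finite measure, $H^k(\S^{d-1})\hookrightarrow L^2(\S^{d-1})$, so $S_n f\to g$ also in $L^2(\S^{d-1})$, while $S_n f\to f$ in $L^2(\S^{d-1})$ by completeness of $\{Y_{\ell,m}\}$ in $L^2(\S^{d-1})$ (which follows from the density of polynomials in $C(\S^{d-1})$). Hence $g=f$, and therefore $\|S_n f-f\|_{H^k(\S^{d-1})}\to 0$ for every $k\in\N_0$. (The same coefficient decay, combined with polynomial sup-norm bounds on $\slashed\partial^\alpha Y_{\ell,m}$, would equally give convergence in $C^k(\S^{d-1})$, which by compactness again implies the assertion.)
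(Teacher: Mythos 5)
Your argument is correct, but it takes a genuinely different route from the paper's. The paper proves the lemma by induction on $k$: the base case $k=0$ is cited from the classical literature on spherical harmonic expansions, and the inductive step exploits the commutation $\slashed\partial^j\slashed\partial_j S_n f = S_n\,\slashed\partial^j\slashed\partial_j f$ together with one integration by parts to upgrade $L^2$-convergence of both $g_n=S_nf-f$ and $\slashed\partial^j\slashed\partial_j g_n$ to $H^1$-convergence of $g_n$, and so on. That argument is short and requires no quantitative information about the coefficients or the eigenfunctions; its cost is that it leans on the $k=0$ case as an external fact and gives no rate. Your proof instead shows \emph{absolute} convergence of the series in $H^k(\S^{d-1})$ by combining super-polynomial decay of $(f|Y_{\ell,m})$ (obtained by moving powers of the Laplace--Beltrami operator onto $f$), the polynomial growth $\#\Omega_{d,\ell}\lesssim(1+\ell)^{d-2}$ of the multiplicity, and the polynomial growth $\|Y_{\ell,m}\|_{H^k}\lesssim(1+\ell)^k$ coming from the elliptic estimate $\|g\|_{H^k}\simeq\|(1+\Lambda)^{k/2}g\|_{L^2}$ on the compact manifold $\S^{d-1}$; the limit is then identified with $f$ via the $L^2$-completeness of the spherical harmonics. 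This is more machinery (you correctly flag the elliptic estimate in the paper's $\slashed\partial^\alpha$ notation as the technical hurdle), but it is self-contained beyond standard facts and yields a quantitative rate of convergence and absolute summability, which the paper's inductive argument does not. Both proofs are valid; the paper's is leaner, yours is sharper.
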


\begin{proof}
For $k=0$ the result is classical and can be found in e.g.~\cite{AtkHan12}.
Now note that
\begin{align*}
\slashed\partial^j\slashed\partial_j S_n f&=\sum_{\ell=0}^n \sum_{m\in 
\Omega_{d,\ell}}(f|Y_{\ell,m})_{\S^{d-1}}\slashed\partial^j
\slashed\partial_j Y_{\ell,m} \\
&=-\sum_{\ell=0}^n \sum_{m\in 
\Omega_{d,\ell}}\ell(\ell+d-1)(f|Y_{\ell,m})_{\S^{d-1}} Y_{\ell,m} \\
&=\sum_{\ell=0}^n \sum_{m\in 
\Omega_{d,\ell}}(f|\slashed\partial^j\slashed\partial_j Y_{\ell,m})_{\S^{d-1}} Y_{\ell,m} \\
&=\sum_{\ell=0}^n \sum_{m\in 
\Omega_{d,\ell}}(\slashed\partial^j\slashed\partial_jf|Y_{\ell,m})_{\S^{d-1}} Y_{\ell,m}
\end{align*}
which shows that $\slashed\partial^j\slashed\partial_j S_n f
=S_n \slashed\partial^j\slashed\partial_j f$.
Consequently, we find
\[ \lim_{n\to\infty}\|\slashed\partial^j\slashed\partial_j 
(S_n f-f)\|_{L^2(\S^{d-1})}
=\lim_{n\to\infty}\| 
S_n \slashed\partial^j\slashed\partial_jf-\slashed\partial^j\slashed\partial_j f\|_{L^2(\S^{d-1})}=0. \]
For $g_n:=S_nf-f$ this yields
\begin{align*} \|g_n\|_{H^1(\S^{d-1})}^2&\lesssim \|g_n\|_{L^2(\S^{d-1})}^2+(\slashed\partial^j g_n|
\slashed\partial_j g_n)_{\S^{d-1}} \\
&=\|g_n\|_{L^2(\S^{d-1})}^2-
(g_n|\slashed\partial^j\slashed\partial_j g_n)_{\S^{d-1}} \\
&\lesssim \|g_n\|_{L^2(\S^{d-1})}^2+\|\slashed\partial^j\slashed\partial_j
g_n\|_{L^2(\S^{d-1})}^2 \to 0
\end{align*}
as $n\to\infty$, which proves the claim for $k=1$.
Now one proceeds inductively.
\end{proof}

\begin{lemma}
\label{lem:expfull}
Let $f\in C^\infty(\overline{\B^d})$, $d\geq 3$, and define $P_{\ell,m}f: [0,1]\to
\C$ by
\[ P_{\ell,m}f(r):=(f(r\,\cdot)|Y_{\ell,m})_{\S^{d-1}}
=\int_{\S^{d-1}}f(r\omega)\overline{Y_{\ell,m}(\omega)}
d\sigma(\omega). \]
Furthermore, set
\[ \tilde S_n f(x) := \sum_{\ell=0}^n \sum_{m\in \Omega_{d,\ell}}
P_{\ell,m}f(|x|)Y_{\ell,m}(\tfrac{x}{|x|}) \]
for $x\in \overline{\B^d}\backslash \{0\}$.
Then we have
\[ \lim_{n\to\infty}\|\tilde S_n f-f\|_{H^k(\B^d)}=0 \]
for any $k\in \N_0$.
\end{lemma}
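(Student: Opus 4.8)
The plan is to deduce the statement from the sphere-level convergence of Lemma~\ref{lem:exp} by viewing $\tilde S_n$ as acting fiberwise in the angular variable. Since $(r,\omega)\mapsto r\omega$ is a smooth map of $[0,1]\times\S^{d-1}$ into $\overline{\B^d}$ and $f\in C^\infty(\overline{\B^d})$, the function $(r,\omega)\mapsto f(r\omega)$ is smooth on $[0,1]\times\S^{d-1}$, and one has the identity $\tilde S_nf(r\omega)=S_n\big[f(r\,\cdot)\big](\omega)$, where $S_n$ is the partial-sum operator from Lemma~\ref{lem:exp}. Differentiating under the integral sign gives $P_{\ell,m}\big(\partial_r[f(r\,\cdot)]\big)(r)=\tfrac{d}{dr}P_{\ell,m}f(r)$, so $\tilde S_n$ commutes with $\partial_r$; moreover $S_n$ commutes with $\slashed\partial^j\slashed\partial_j$ (as in the proof of Lemma~\ref{lem:exp}), hence $S_n$ is a contraction on every $H^m(\S^{d-1})$ and, by the Sobolev embedding on the sphere, the family $\{S_n\}$ is uniformly bounded from $H^m(\S^{d-1})$ into $C^{m'}(\S^{d-1})$ whenever $m$ is large relative to $m'$.

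Next I would upgrade Lemma~\ref{lem:exp} to convergence that is uniform in the radial parameter. For each $i\in\N_0$ the map $r\mapsto\partial_r^i[f(r\,\cdot)]$ is continuous from $[0,1]$ into $H^m(\S^{d-1})$, hence has compact image; since $S_n\to I$ strongly on $H^m(\S^{d-1})$ with $\|S_n\|\le 1$, the convergence $S_n\to I$ is uniform on this compact set. Combining this with the commutation of $\tilde S_n$ and $\partial_r$ and the embedding $H^m(\S^{d-1})\hookrightarrow C^{m'}(\S^{d-1})$ (taking $m$ as large as needed), I obtain that for every $i$ and every multi-index $\beta$,
\[
\lim_{n\to\infty}\ \sup_{(r,\omega)\in[0,1]\times\S^{d-1}}\big|\partial_r^i\slashed\partial^\beta\big(\tilde S_nf-f\big)(r\omega)\big|=0 .
\]

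The remaining point — and the genuine difficulty — is the coordinate singularity at the origin: when a Cartesian derivative $\partial^\alpha$ with $|\alpha|\le k$ is written in polar coordinates, it produces negative powers $r^{-p}$ (with $p\le|\alpha|$) multiplying the polar derivatives $\partial_r^i\slashed\partial^\beta$. These are harmless because $S_n$ reproduces polynomials of low degree: the Taylor expansion of $f$ at $0$ shows that $\partial_r^i[f(r\omega)]\big|_{r=0}$ is a homogeneous polynomial of degree $i$ in $\omega$, which lies in the range of $S_i$, so $(I-S_n)$ annihilates it once $n\ge i$. Consequently, writing $E_n:=\tilde S_nf-f$ and $\tilde E_n(r,\omega):=E_n(r\omega)$, the function $\tilde E_n$ is smooth on $[0,1]\times\S^{d-1}$ and satisfies $\partial_r^i\tilde E_n(0,\cdot)\equiv 0$ for all $i\le n$; by Taylor's theorem this yields, for $n\ge 2k$, the bound $\big|\partial_r^i\slashed\partial^\beta\tilde E_n(r,\omega)\big|\le C\,r^{\,k+1}\,\|\partial_r^{\,k+1+i}\slashed\partial^\beta\tilde E_n\|_{C^0([0,1]\times\S^{d-1})}$, with the last factor tending to $0$ by the previous step. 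Hence every term $r^{-p}\partial_r^i\slashed\partial^\beta\tilde E_n$ with $p\le k$ is $o(1)$ as $n\to\infty$, uniformly on $\overline{\B^d}$, so $\|\partial^\alpha E_n\|_{L^\infty(\B^d)}\to 0$ for all $|\alpha|\le k$; this gives $\|\tilde S_nf-f\|_{H^k(\B^d)}\to0$ (indeed $C^k(\overline{\B^d})$-convergence), and since $m$ in the previous step was arbitrary, the conclusion holds for every $k$.
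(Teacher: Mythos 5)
Your proof is correct, but it takes a genuinely different route from the paper. The paper stays entirely in $L^2$-land: for $k=0$ it applies dominated convergence with Bessel's inequality providing the dominant, for $k=1$ it splits $\nabla$ into a radial part (which commutes with $\tilde S_n$) and a $\tfrac1r$-weighted angular part (handled by integrating by parts on the sphere and another dominated-convergence argument), and for $k\ge 2$ it asserts that one proceeds inductively — hiding the increasingly painful bookkeeping of $r^{-p}$ factors that arise when iterating the polar-coordinate formula for $\partial_j$. You instead first upgrade Lemma~\ref{lem:exp} to \emph{uniform-in-$r$} convergence via a Banach--Steinhaus argument (strong convergence of the uniformly bounded $S_n$ on the compact image of $r\mapsto\partial_r^i[f(r\,\cdot)]$ in $H^m(\S^{d-1})$), transfer this to sup-norm control of $\partial_r^i\slashed\partial^\beta(\tilde S_nf-f)$ by Sobolev embedding on the sphere, and then neutralize the $r^{-p}$ singularity at the origin by a Taylor-expansion trick resting on the observation that $S_n$ reproduces restrictions to $\S^{d-1}$ of homogeneous polynomials of degree $\le n$, so that $(I-S_n)$ kills the low-order Taylor data of $\tilde E_n$ at $r=0$. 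The payoff is a stronger conclusion: convergence in $C^k(\overline{\B^d})$, not merely in $H^k(\B^d)$; the cost is that the argument leans on more machinery (uniform boundedness principle, Sobolev embedding on the sphere, Taylor with remainder) than the paper's dominated-convergence scheme. One small imprecision worth tidying: $S_n$ is literally a contraction on $H^m(\S^{d-1})$ only in the equivalent norm built from powers of $1-\slashed\partial^j\slashed\partial_j$ (which commutes with $S_n$), not in the norm $\sum_{|\alpha|\le m}\|\slashed\partial^\alpha\cdot\|_{L^2}$ as the paper defines it, since the individual $\slashed\partial_j$ do not commute with $S_n$; what you actually need and have is uniform boundedness $\sup_n\|S_n\|_{H^m\to H^m}<\infty$, which suffices for both the compactness argument and the Sobolev-embedding step.
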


\begin{proof}
We define $\varphi_n: (0,1]\to \R$ by $\varphi_n(r)
:=\|\tilde S_n f(r\,\cdot)-f(r\,\cdot)\|_{L^2(\S^{d-1})}^2$. 
From Lemma \ref{lem:exp} we have $\varphi_n(r)\to 0$ as $n\to\infty$
for each $r\in (0,1]$.
Furthermore, Bessel's inequality implies
\begin{align*}
|\varphi_n(r)|&\lesssim \|\tilde S_n f(r\,\cdot)\|_{L^2(\S^{d-1})}^2
+\|f(r\,\cdot)\|_{L^2(\S^{d-1})}^2 \\
&=\sum_{\ell=0}^n \sum_{m\in \Omega_{d,\ell}}|P_{\ell,m}f(r)|^2
+\|f(r\,\cdot)\|_{L^2(\S^{d-1})}^2 \\
&\lesssim \|f(r\,\cdot)\|_{L^2(\S^{d-1})}^2
\end{align*}
and thus, the dominated convergence theorem yields
\begin{align*}
\|\tilde S_n f-f\|_{L^2(\B^d)}^2&=\int_0^1 \varphi_n(r)r^{d-1}dr
\to 0
\end{align*}
as $n\to\infty$. This is the claim for $k=0$.

In order to prove the case $k=1$, we set $g_n:=\tilde S_nf -f$ and
note that 
\[\partial_j g_n(r\omega)=
\omega_j \partial_r g_n(r\omega)+\tfrac{1}{r}\slashed\partial_{\omega^j}
g_n(r\omega). \]
For notational convenience we define $D_\mathrm{rad}f(x):=\frac{x^j}{|x|}
\partial_j f(x)$.
Then we have $D_\mathrm{rad}f(r\omega)=\omega^j \partial_j f(r\omega)
=\partial_r f(r\omega)$.
Furthermore, we write 
\[ \slashed D_j f(x):=|x|\left (\delta_j{}^k-\frac{x_jx^k}{|x|^2}\right )
\partial_k f(x) \]
so that $\slashed\partial_{\omega^j}f(r\omega)=\slashed D_j f(r\omega)$.
We also set $\slashed \nabla f:=(\slashed D_1 f,\slashed D_2 f,
\dots, \slashed D_d f)$.
With this notation we obtain
\begin{equation} 
\label{eq:H1gn}
\|g_n\|_{H^1(\B^d)}\lesssim \|g_n\|_{L^2(\B^d)}
+\|D_\rad g_n\|_{L^2(\B^d)}
+\|\tfrac{1}{|\cdot|}
\slashed \nabla g_n\|_{L^2(\B^d)}. 
\end{equation}
We have already shown above that $\|g_n\|_{L^2(\B^d)}\to 0$ as $n\to\infty$.
Now we turn to the second term.
The dominated convergence theorem implies
\begin{align*} \partial_r P_{\ell,m}f(r)&=\partial_r \int_{\S^{d-1}}f(r\omega)
\overline{Y_{\ell,m}(\omega)}d\sigma(\omega) \\
&= \int_{\S^{d-1}}\partial_r f(r\omega)
\overline{Y_{\ell,m}(\omega)}d\sigma(\omega) 
\end{align*}
and thus, $\partial_r P_{\ell,m}f(r)=P_{\ell,m}D_\mathrm{rad}f(r)$.
As a consequence, we obtain $D_\rad \tilde S_n f=\tilde S_n D_\rad f$
which yields
\begin{align*}
 \|D_\rad (\tilde S_n f-f)\|_{L^2(\B^d)}=\|\tilde S_n D_\rad f-D_\rad f\|_{L^2(\B^d)}
 \to 0\qquad (n\to\infty)
\end{align*}
by the same argument as above.
Hence, in view of \eqref{eq:H1gn} it remains to show that 
$\|\tfrac{1}{|\cdot|}
\slashed \nabla g_n\|_{L^2(\B^d)} \to 0$.
As in the proof of Lemma \ref{lem:exp} we have
$\slashed D^j \slashed D_j \tilde S_n f=\tilde S_n \slashed D^j \slashed D_j f$
and thus,
\begin{align*} 
\tfrac{1}{r^2}\|\slashed\nabla g_n(r\,\cdot)\|_{L^2(\S^{d-1})}^2
&=\tfrac{1}{r^2}(\slashed D^j g_n(r\,\cdot)|\slashed D_j g_n(r\,\cdot))_{\S^{d-1}} \\
&=-(g_n(r\,\cdot)|\tfrac{1}{r^2}
\slashed D^j\slashed D_j g_n(r\,\cdot))_{\S^{d-1}} \\
&\leq \|g_n(r\,\cdot)\|_{L^2(\S^{d-1})}^2 \\
&\quad +\tfrac{1}{r^4}\|\tilde S_n \slashed D^j
\slashed D_j f(r\,\cdot)
-\slashed D^j\slashed D_j f(r\,\cdot)\|_{L^2(\S^{d-1})}^2.
\end{align*}
We define $\psi_n: (0,1]\to \R$ by 
\[ \psi_n(r):=\tfrac{1}{r^4}\|\tilde S_n \slashed D^j
\slashed D_j f(r\,\cdot)
-\slashed D^j\slashed D_j f(r\,\cdot)\|_{L^2(\S^{d-1})}^2 \]
and from Lemma \ref{lem:exp} we know that $\psi_n(r)\to 0$ as $n\to\infty$
for any $r\in (0,1]$.
Furthermore, from Bessel's inequality we infer
\[ |\psi_n(r)|\lesssim 
\tfrac{1}{r^4}\|\slashed D^j\slashed D_jf(r\,\cdot)\|_{L^2(\S^{d-1})}^2 \]
and by the definition of $\slashed D_j$ we have
\[ |\tfrac{1}{r^2}\slashed D^j \slashed D_j f(r\omega)|
\lesssim \sum_{|\alpha|\leq 2}|\partial^\alpha f(r\omega)| \]
which implies
\[ |\psi_n(r)|\lesssim \|f\|_{W^{2,\infty}(\B^d)}<\infty \]
for all $r\in (0,1]$.
Thus, by dominated convergence we obtain
\[ \int_0^1 \psi_n(r)r^{d-1}dr \to 0 \]
as $n\to\infty$ and this proves the desired
$\|\tfrac{1}{|\cdot|}
\slashed \nabla g_n\|_{L^2(\B^d)} \to 0$.
The general case $k\geq 2$ follows inductively.
\end{proof}

\bibliography{wave7}
\bibliographystyle{plain}

\end{document}